\newcommand{\sss}{\scriptscriptstyle}
\def \S#1#2{S_{#1}\l(#2\r)}
\newcommand{\compact}{ \topsep0pt   \itemsep=0pt   \partopsep=0pt   \parsep=0pt}
\newcounter{c}
\def \bir{\begin{itemize}\compact \setcounter{c}{0}}
\def \itr{\addtocounter{c}{1}\item[($\roman{c}$)]} %i ii
\def \eir{\end{itemize}\vspace{-2em}~}
\newcounter{d}
\def \bia{\begin{itemize}\compact \setcounter{d}{0}}
\def \eia{\end{itemize}\vspace{-2em}~}
\def \longto{\longrightarrow}
\def \f{{\sf f}}
\newcommand \Fac[1]{{\sf Fac}_{#1}^{\Delta}}
\def \uniform{{\sf Uniform}}
\def \Bernoulli{{\sf Bernoulli}}
\def \cro#1{{\llbracket #1\rrbracket}}
\newcommand{\Bin}{{\sf Bin}}
\newcommand{\Geom}{{\sf Geo}}
\newcommand{\Poi}{{\sf Poi}}
\def \ba{\begin{align}}
\def \ea{\end{align}}
\def \be{\begin{eqnarray*}}
\def \ee{\end{eqnarray*}}
\def \ben{\begin{eqnarray}}
\def \een{\end{eqnarray}}
\def \beq{\begin{equation}}
\def \eq{\end{equation}}
\def \eref#1{(\ref{#1})}
\def \l{\left}
\def \r{\right}
\def \sss{}
\def \as{\xrightarrow[n]{\sss (as.)}}
\def \dd{\xrightarrow[n]{(d)}}
\newcommand{\bs}{\boldsymbol}
\newcommand{\C}{\mathbb{C}}
\newcommand{\R}{\mathbb{R}}
\def \ov#1{\overline{#1}}
\newcommand{\E}{\mathbb{E}}
\renewcommand{\P}{\mathbb{P}}
\newcounter{a}\setcounter{a}{0}
\newcommand{\Ber}{{\sf Ber}}
\newtheorem{thm}{Theorem}[section]
\newtheorem{prop}[thm]{Proposition}
\newtheorem{df}[thm]{Definition}
\newtheorem{lem}[thm]{Lemma}
\newtheorem{rem}[thm]{Remark}
\newtheorem{OQ}{Open question}
\def \sur#1#2{\mathrel{\mathop{\kern 0pt#1}\limits^{#2}}}
\def \sous#1#2{\mathrel{\mathop{\kern 0pt#1}\limits_{#2}}}
\def \eqd{\sur{=}{(d)}}
\def \dd{\xrightarrow[n]{(d)}}
\renewenvironment{proof}[1][Proof] {\par\pushQED{\qed}\normalfont\topsep6\p@\@plus6\p@\relax\trivlist\item[\hskip\labelsep\bfseries#1\@addpunct{.}]\ignorespaces}{\popQED\endtrivlist\@endpefalse} 
\def \HYPR{{\bf (HReg)}}
\def \HYPM{{\bf (HMom)}}
\def \imp{{\Rightarrow}}
\title{Parametrised branching processes:  a functional version\\ of Kesten \& Stigum theorem}
\author{C{\'e}cile Mailler\thanks{Department of Mathematical Sciences, University of Bath, Claverton Down, BA2 7AY Bath, UK. c.mailler@bath.ac.uk. CM is grateful to EPSRC for support through the fellowship EP/R022186/1.}~~and Jean-François Marckert\thanks{CNRS, LaBRI ,Universit\'e Bordeaux, 351 cours de la Libération  33405 Talence cedex, France}}
\newcounter{ab}
\definecolor{brit}{rgb}{0.0, 0.26, 0.15}
\definecolor{ame}{rgb}{0.6, 0.4, 0.8}
\definecolor{rouge}{rgb}{.7, .2, .2}
\definecolor{vio}{rgb}{.5, 0, .5}
\def \Q{\mathbb{Q}}
\def \1{\mathbb{I}}
\def \bpar#1{\left\{\begin{array}{#1} }
                      \def \epar { \end{array}\right.}
\newcommand \U[4]{\Upsilon_{#3}^{#4}(#1,#2)}
\begin{document}
\maketitle
\begin{abstract} Let $(Z_n,n\geq 0)$ be a supercritical Galton-Watson process whose offspring distribution $\mu$ has mean $\lambda>1$ and is such that $\int x(\log(x))_+ d\mu(x)<+\infty$. According to the famous Kesten \& Stigum theorem, $(Z_n/\lambda^n)$ converges almost surely, as $n\to+\infty$. The limiting random variable has mean~1, 
and its distribution is characterised as the solution of a fixed point equation. \par
In this paper, we consider a family of Galton-Watson processes $(Z_n(\lambda), n\geq 0)$ defined for~$\lambda$ ranging in an interval $I\subset (1, \infty)$, 
and where we interpret $\lambda$ as the time (when $n$ is the generation). 
The number of children of an individual at time~$\lambda$ is given by $X(\lambda)$, where $(X(\lambda))_{\lambda\in I}$ is a c\`adl\`ag integer-valued process which is assumed to be almost surely non-decreasing and such that $\mathbb E(X(\lambda))=\lambda >1$ for all $\lambda\in I$. This allows us to define $Z_n(\lambda)$ the number of elements in the $n$th generation at time $\lambda$.

Set $W_n(\lambda)= Z_n(\lambda)/\lambda^n$ for all $n\geq 0$ and $\lambda\in I$.
We prove that, under some moment conditions on the process~$X$,
the sequence of processes $(W_n(\lambda), \lambda\in I)_{n\geq 0}$ converges in probability as~$n$ tends to infinity in the space of c\`adl\`ag processes equipped with the Skorokhod topology to a process, which we characterise as the solution of a fixed point equation.

\end{abstract}

\allowdisplaybreaks
\section{Introduction}

The aim of this paper is to discuss some natural models of parameterised branching processes and to introduce a functional version of the Kesten \& Stigum theorem which is one of the prominent results in branching processes theory.

The standard Galton-Watson (GW) process with offspring distribution $\nu=(\nu_j,j\geq 0)$, a probability distribution on $\mathbb{N}:=\{0,1,2,\cdots\}$
is an integer-valued Markov chain $(Z_n, n\geq 0)$ 
such that $Z_0=1$ and, for any $n\geq 0$,
\begin{equation}\label{eq:Zn+1}
Z_{n+1} = \sum_{k=1}^{Z_n} X^{(k,n)}
\end{equation}
where the $(X^{(k,n)},~ k, n\geq 0)$ are i.i.d.\ random variables with distribution $\nu$. 
It is standard to interpret a GW process as describing the evolution of a population structured in generations: for all $n\geq 0$,
$Z_{n+1}$ is seen as the number of individuals in the $(n+1)$-th generation of a population, 
and more precisely, $X^{(k,n)}$ is the number of children of the $k$-th individual of the $n$-th generation. 
From this classical point of view, 
the GW process is the sequence of generation sizes of a the genealogical tree of the branching process (also called the family tree).

\begin{thm}[Kesten \& Stigum \cite{kesten1966}]
Consider $(Z_n, n\geq 0)$ a GW process with offspring distribution~$\nu$, whose mean $\lambda$ is finite. 

If the process is supercritical, that is $\lambda>1$, the sequence $(W_n,n\geq 0)$ defined by
\begin{equation}\label{eq:q83qdsd}W_n = Z_n/\lambda^n,\quad\text{ for all }n\geq 0,
\end{equation}
converges almost surely to a random variable $W$,
and $\P(W>0)>0$ iff $\E(X\log^+(X))<+\infty$ (where $X\sim\nu$). 
Moreover, in this case, $\E W=1$,  and $\P(W=0)=q$ where $q$ is the extinction probability of the branching process~$Z$, that is $q=\P(\exists k~: Z_k=0)$. The value of $q$ is characterised as  the smallest non-negative root of $q= \f(q)$ where $f$ is the probability generating function of $X\sim\nu$~:
  \be
  \f(y)=\E(y^{X}).
  \ee
\end{thm}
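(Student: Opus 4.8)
The plan is to prove the Kesten \& Stigum theorem in the classical way, combining a martingale argument for almost sure convergence with a truncation step for the characterisation of when the limit is non-degenerate. First I would observe that, writing $\mathcal{F}_n$ for the $\sigma$-algebra generated by $(X^{(k,m)}: k\geq 1,\, 0\leq m<n)$, the branching property \eqref{eq:Zn+1} gives $\E(Z_{n+1}\st \mathcal F_n)=\lambda Z_n$, so that $(W_n,n\geq 0)$ is a non-negative martingale. By Doob's martingale convergence theorem it converges almost surely to a limit $W\in[0,\infty)$. Since the event $\{W=0\}$ contains the extinction event $\{\exists k: Z_k=0\}$, and since on the survival set $Z_n\to\infty$, a standard $0$--$1$ argument using the branching decomposition $W=\lambda^{-1}\sum_{i=1}^{X} W^{(i)}$ (with the $W^{(i)}$ i.i.d.\ copies of $W$, independent of $X\sim\nu$) shows that $\P(W=0)\in\{q,1\}$: indeed, $s:=\P(W=0)$ satisfies $s=\f(s)$, so $s=q$ or $s=1$.

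The heart of the matter is then to show the dichotomy: $W$ is not identically $0$ (equivalently $\P(W=0)=q<1$, equivalently $\E W=1$) if and only if $\E(X\log^+ X)<\infty$. For the sufficiency direction, the cleanest route is a second-moment/truncation argument: fix a threshold and set $\bar X^{(k,n)} = X^{(k,n)}\1_{\{X^{(k,n)}\leq c_n\}}$ for a slowly growing sequence $c_n$, build the associated truncated process $\bar Z_n$ and $\bar W_n = \bar Z_n/\lambda^n$, and show that $\sup_n \E \bar W_n<\infty$ using a telescoping bound on $\Var(\bar W_{n+1}-\bar W_n\st \mathcal F_n)$; the series of these conditional variances converges precisely when $\sum_n \P(X>c_n)$-type tails are summable, which by a dyadic choice $c_n=\lambda^n$ is equivalent to $\E(X\log^+ X)<\infty$. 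One then checks that $\bar W_n$ and $W_n$ have the same almost sure limit (the events where truncation matters are, by Borel--Cantelli, finitely often), and uniform integrability of $(W_n)$ follows, giving $\E W = \lim \E W_n = 1$. For the necessity direction, one shows that if $\E(X\log^+ X)=\infty$ then $W=0$ a.s.: the argument of choice is the Seneta--Heyde type estimate, or alternatively the size-biasing / spine construction — under the size-biased measure $\hat{\P}$ the spine individual has $\hat X$ with $\E\hat X = \E(X^2)/\lambda$-flavoured tails and one shows $\liminf W_n = 0$, $\hat{\P}$-a.s., forcing $W=0$ under $\P$ by a Kakutani-type absolute continuity dichotomy.

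Concretely, I would carry out the steps in this order: (i) martingale convergence giving $W$ and $W\geq 0$; (ii) the fixed-point equation $\P(W=0)=\f(\P(W=0))$ and the resulting $\P(W=0)\in\{q,1\}$; (iii) the truncation construction and the conditional-variance computation showing $L^1$-boundedness of the truncated martingale under $\E(X\log^+X)<\infty$; (iv) identification of the truncated and untruncated limits via Borel--Cantelli, hence uniform integrability and $\E W=1$ in this case; (v) the converse, via the spine / size-biased measure and the Kakutani dichotomy, showing $W=0$ a.s.\ when $\E(X\log^+X)=\infty$; (vi) assembling these into the stated equivalences, together with the identification of $q$ as the smallest non-negative root of $q=\f(q)$, which is the classical extinction-probability computation for GW processes.

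The main obstacle is step (iii)--(v), i.e.\ the sharp moment condition. The $L\log L$ threshold is delicate: a naive second-moment argument needs $\E(X^2)<\infty$, which is far too strong, so the truncation level $c_n$ must be tuned exactly so that the conditional variances sum iff $\E(X\log^+X)<\infty$, and one must be careful that the truncated process still has mean close to $\lambda$ (so that dividing by $\lambda^n$ rather than by $\bar\lambda^n$ is harmless up to a bounded multiplicative correction). The converse direction is equally subtle and is really where the spine decomposition earns its keep: controlling $\liminf_n W_n$ under the size-biased measure requires a careful analysis of the growth of the spine's offspring, and transferring the conclusion back to $\P$ relies on the fact that $(W_n)$ is exactly the Radon--Nikodym density of $\hat\P$ with respect to $\P$ on $\mathcal F_n$, so that $\{W=0\}$ and $\{W=\infty\}$ (under $\hat\P$) are the two sides of the Kakutani dichotomy. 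Since the present paper's goal is a \emph{functional} extension, I would keep this classical proof modular enough that the truncation level and the spine argument can later be made uniform in the parameter $\lambda\in I$.
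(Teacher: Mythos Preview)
The paper does not give its own proof of this theorem: it is stated as the classical Kesten \& Stigum result with a citation to \cite{kesten1966}, followed only by the remarks that $(W_n)$ is a non-negative martingale (so a.s.\ convergence is automatic) and that the fixed point equation \eqref{eq:qdsd}--\eqref{eq:qdhtsd} follows from the branching decomposition, with uniqueness up to scale attributed to Seneta~\cite{Seneta68}. There is therefore nothing in the paper to compare your proposal against beyond those two sentences.

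Your sketch is the standard modern proof (truncation plus the Lyons--Pemantle--Peres spine / size-biasing argument) and is essentially correct as an outline. Two small points if you were to write it out in full: in step (iii) you should be explicit that the truncated process is \emph{not} itself a martingale for the original mean~$\lambda$, so one either works with the compensated increments $\bar W_{n+1}-\E(\bar W_{n+1}\mid\mathcal F_n)$ or corrects by $\prod_k(\bar\lambda_k/\lambda)$ and checks this product stays bounded away from~$0$ under $\E(X\log^+X)<\infty$; and in step (v) the cleanest version of the converse is purely on the spine side (show $\limsup W_n=\infty$ under $\hat\P$ directly from $\sum_n \P(\hat X>\lambda^n)=\infty$ and Borel--Cantelli along the spine), which avoids any Seneta--Heyde estimates. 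Your final remark about keeping the truncation uniform in~$\lambda$ is not how the paper proceeds: the functional result there goes through fourth-moment tightness bounds rather than a parametric $L\log L$ argument.
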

The original version of the theorem was written in the multi-type case (see  \Cref{sec:refere} for references and more details). 
In fact the process $(W_n, n \geq 0)$
is a non-negative martingale so that the a.s.\ convergence of $W_n$ to a random variable $W$ is granted. 
Since branching processes can be decomposed at their first generation, 
the limit $W$ is solution to a fixed point equation:
\begin{equation}\label{eq:qdsd}
W \eqd \lambda^{-1}\sum_{j=1}^{Z_1} W^{(j)}\end{equation}
where the $W^{(j)}$ are independent copies of $W$, independent of $Z_1$.
In other words, 
the characteristic function $x\mapsto\Phi(x)= \E(e^{i xW})$ (for $x\in \R$) is solution of
\begin{equation}
\label{eq:qdhtsd}
\Phi(x)= \f(\Phi(x/\lambda)),~~~ \textrm{ for all }x\in \R.
\end{equation}
However, the functional equation~\eref{eq:qdhtsd} (or distribution equation \eref{eq:qdsd}) does not fully characterise $\Phi$, since for any constant $c$, $cW$ is also solution of \eref{eq:qdsd}.
By Seneta~\cite[Th.\ 3.1]{Seneta68}, 
the solution of~\eref{eq:qdhtsd} is unique up to constant factors.
Furthermore, by Kesten-Stigum, $\mathbb E(X(\log X)_+)<\infty\Leftrightarrow \mathbb E(W)=1$, which implies that, in that case, $W$ is the unique solution of~\eref{eq:qdhtsd} with mean~1.

\subsection{A process of GW processes: definition of the model}

We mainly aim at addressing the following question: 
Kesten \& Stigum's theorem is a ``one-dimensional result'', 
since it concerns the limit of the one-dimensional sequence $(W_n, n\geq 0)$. 
There are some natural models in which appears a 
family of GW processes parameterised by a second parameter, 
which can be interpreted as a ``time'' parameter. (We give such an example after the definition.)

\begin{df}\label{def:GWprocess}
Let $I\subset [0, \infty)$, and $(X(\lambda))_{\lambda\in I}$ an almost surely non-decreasing, integer-valued process taking its values in the set of c\`adl\`ag functions $D(I,\R^+)$ equipped with the Skorokhod  topology on all compact subsets of $I$. We define $(Z_n(\lambda))_{\lambda\in I}$ as the process satisfying 
$Z_0(\lambda) = 1$ for all $\lambda\in I$, and, for all $n\geq 0$,
\begin{equation}
\label{eq:Zn+1lambda}
Z_{n+1}(\lambda) 
= \sum_{k=1}^{Z_n(\lambda)} X^{(k,n)}(\lambda)
\end{equation}
where $(X^{(k,n)}, k, n\geq 1)$ is a sequence of i.i.d.\ copies of the process~$X$.
\end{df}
In standard GW processes, $n$ is called the generation: this is the index~$n$ in $Z_n(\lambda)$. We choose to call $\lambda$ the time, and $X$ the offspring process of~$Z$.

\paragraph{A first example:}
Arguably the simplest of these is when the number of children of a node at time $\lambda$ is $X(\lambda)$, where~$X$ is a simple Poisson process on $[0,\infty)$. 
At any given time $\lambda$, the branching process $(Z_n(\lambda),n\geq 0)$ is a simple Galton process whose offspring distribution is ${\sf Poisson}(\lambda)$. For each $n$, $\lambda\to Z_n(\lambda)$ is almost surely non-decreasing, since the number of children of each individual in the tree is non-decreasing as a function of $\lambda$. 
In fact, as $\lambda$ increases, the process of family trees forms a growing family of trees for the inclusion order.
Consider a node of the tree~$u$ which, say, is created at time $t$. 
As a node of the tree at time $t$, its number of children is distributed as $X(t)$, so that the subtree $T^{t}_u$ rooted at $u$ (at time $t$) has the same distribution as a global family tree $T^t$ of a Galton-Watson process with offspring distribution ${\sf Poisson}(t)$.
More examples will be given in \Cref{sec:3examples}.  

\paragraph{A motivation:}

This kind of model arises for example, when one studies the Erd\H os-Renyi graph $G(N,p)$ for $p=\lambda/N$ and $N$ large. 
A vertex $u$ has a ${\sf Binomial}(N-1,p)$ random number of neighbors in the graph, 
approximately ${\sf Poisson}(\lambda)$ distributed when $N$ is large. 
For any fixed~$r>0$, the subgraph of $G(N,p)$ induced by the vertices at graph distance smaller than $r$ to $u$ is well approximated by a Galton-Watson process with offspring distribution Poisson$(\lambda)$ (restricted to its $r$ first generations). 
In many applications (starting from the study of coalescence processes, or as the study of the cluster sizes of $G(N,p)$), $p$ is seen as a varying parameter: 
to each of the $N(N-1)/2$ edges~$e$ of the complete graph $K_N$, assign a weight $w_e$, where the $w_e$ are i.i.d. uniform on $[0,1]$. 
The graph ${\cal G}(N,p)$ obtained by keeping only the edges $e$ of $K_N$ such that $w_e\leq p$ has same distribution as $G(N,p)$, and $p\mapsto {\cal G}(N,p)$ is a graph process which is non-decreasing for the inclusion order. 
Now, if one wants to study the evolution of ${\cal G}(N,\lambda/N)$ in the ball of radius $r$ around a given node, when $N\to +\infty$, for $\lambda \in[a,b]$, then one has to deal with our model: the offspring distribution of the involved nodes, asymptotically, are Poisson process $X=(X(\lambda), \lambda\in[a,b])$.\medbreak

Kesten \& Stigum's theorem implies that the 1-dimensional marginals of the process $(W_n(\lambda), \lambda\in I)_{n\geq 0}$, for parameters $\lambda$ such that $\E(X(\lambda))>1$, converge almost surely. In our model, the $(W_n(\lambda))$ for different values of $\lambda$ are coupled, so that a natural question is: 

Does $(W_n(\lambda), \lambda\in I)$, seen as a sequence of processes indexed by $I$ converges in distribution, or in a stronger sense, as $n\to+\infty?$ \medbreak

Of course, we are also interested in the description of the limit, if it exists.\par

In our main result we assume some properties of the process $X$; we packed these hypothesis into two groups~{\HYPR} and~{\HYPM} ; one concerns ``the regularity of $X$'', and the other ``some moments properties'': \medbreak

\centerline{------------------------------}
{\HYPR}  :  a.s., the process $X$ is c\`adl\`ag on an interval $I\subset(1,+\infty)$; on this interval $X$ is a.s. non decreasing, takes its values in $\mathbb{N}=\{0,1,2,\cdots\}$, and, for any $\lambda \in I$,
\[\E(X(\lambda))=\lambda.\]
\centerline{------------------------------}
Under this hypothesis,   $(Z_n(\lambda), n\geq 0)_{\lambda \in I}$ is a process of GW processes, where for each $\lambda>1$, $(Z_n(\lambda),n\geq 0)$ is a supercritical GW process, whose offspring distribution has mean~$\lambda$. 
Analogously to~\eref{eq:q83qdsd}, we define the process of processes $(W_n(\lambda),\lambda\in I)_{n\geq 1}$ by
\be
W_n(\lambda) = Z_n(\lambda)\,/\,{\lambda^n},\quad \lambda \in I, n \geq 0. 
\ee
\begin{rem}[Comments on {\HYPR} ]
  \bir
  \itr If we remove the non-decreasing property for $X$ then, disappearance of individuals could occur when $\lambda$ grows;  this leads to some complications since the identity of disappearing individuals in their generation matters. We prefer to avoid these complications, even if these models can be defined and investigated.
  \itr Any c\`adl\`ag, non decreasing and non negative  process $Y$ taking its values in $\mathbb{N}$, defined  on an interval $I'$, and satisfying $\E(Y(\lambda))<+\infty$, and $\lambda\to \E(Y(\lambda))$ continuous increasing, is the time-changed of a process $X$ satisfying {\HYPR}. 
  It suffices to set $X(\lambda)=Y(g(\lambda))$ where $g(\lambda)=y$ if $\E(Y(y))=\lambda$ (that is $g$ is the inverse of the map $\lambda\mapsto \E(Y(\lambda))$).
  \eir 
\end{rem} \medbreak

For $\lambda_1<\cdots <\lambda_d \in I$, set
\[\Delta X(\lambda_j):= X(\lambda_j)-X(\lambda_{j-1})\]
with, by convention, $\lambda_0=0$, and $X(\lambda_0)=X(0)=0$.

Denote by  $\Fac{}$ the factorial moments of the increments of $X$ defined as usual by
\be
\Fac{\beta_1,\cdots,\beta_d}(\lambda_1,\cdots,\lambda_d)=\E\l( \prod_{j=1}^d (\Delta X_{\lambda_j})_{(\beta_j)}\r)
\ee
where $(x)_{(r)}= x(x-1)\cdots (x-r+1)$; for example
\[\Fac{3,2,1}(\lambda_1,\lambda_2,\lambda_3)=\E\l[ (\Delta X(\lambda_1))(\Delta X(\lambda_1)-1)(\Delta X(\lambda_1)-2)(\Delta X(\lambda_2))(\Delta X(\lambda_2)-1) (\Delta X(\lambda_3))\r].\]
{{\bf Convention}: we often write $\Fac{\beta_1,\cdots,\beta_d}$ instead of $\Fac{\beta_1,\cdots,\beta_d}(\lambda_1,\ldots,\lambda_d)$; all along the paper, when we need to fix some times, we always choose $(\lambda_1,\ldots,\lambda_d)$ so that this notation is not ambiguous.}

When {\HYPR} holds, $\Fac{1,0}=\Fac{1,0}(\lambda_1,\lambda_2)=\lambda_1$ and  $\Fac{0,1}=\Fac{0,1}(\lambda_1,\lambda_2)=\lambda_2-\lambda_1$.

\begin{lem} Assuming {\HYPR} , the process  $((Z_n(\lambda), \lambda\in I), n\geq 0)$ is (a.s.) well defined, and for each $n$, $\lambda\mapsto Z_n(\lambda)$ is a.s.\ c\`adl\`ag and non-decreasing. 
  \end{lem}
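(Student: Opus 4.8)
The plan is to proceed by induction on the generation index $n$, building the process $(Z_n(\lambda),\lambda\in I)$ generation by generation from the defining recursion \eref{eq:Zn+1lambda}, and checking at each step that the three properties (well-definedness, c\`adl\`ag, non-decreasing) are preserved. The base case $n=0$ is immediate: $Z_0(\lambda)\equiv 1$ is constant, hence trivially c\`adl\`ag, non-decreasing, and a.s.\ finite.

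For the inductive step, suppose $\lambda\mapsto Z_n(\lambda)$ is a.s.\ c\`adl\`ag, non-decreasing, and $\N$-valued. The key observation is that, on any compact $[a,b]\subset I$, a non-decreasing $\N$-valued c\`adl\`ag function takes only finitely many values, hence has only finitely many jump times $a\le t_1<\cdots<t_m\le b$, and is constant on each of the intervals $[t_i,t_{i+1})$. So a.s.\ there is a (random) finite bound $M=Z_n(b)$ on the number of summands in \eref{eq:Zn+1lambda} on $[a,b]$. Writing
\[
Z_{n+1}(\lambda)=\sum_{k=1}^{Z_n(\lambda)} X^{(k,n)}(\lambda),\qquad \lambda\in[a,b],
\]
each $X^{(k,n)}$ is by hypothesis a.s.\ c\`adl\`ag and non-decreasing, and (using that the $X^{(k,n)}$ are independent and each a.s.\ finite at $\lambda=b$, via {\HYPR} which gives $\E(X^{(k,n)}(b))=b<\infty$) all of $X^{(1,n)},\dots,X^{(M,n)}$ are simultaneously finite at $b$ on an event of probability one. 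A finite sum of non-decreasing functions is non-decreasing, and the summation range $Z_n(\lambda)$ is itself non-decreasing and integer-valued, so adding a further summand as $\lambda$ crosses a jump of $Z_n$ only increases $Z_{n+1}$; this gives monotonicity of $Z_{n+1}$. For the c\`adl\`ag property, away from the (finitely many) jump times of $Z_n$ on $[a,b]$ the map $\lambda\mapsto Z_{n+1}(\lambda)$ is a finite sum of c\`adl\`ag functions with a locally constant number of terms, hence c\`adl\`ag; at a jump time $t_i$ of $Z_n$, right-continuity follows because $Z_n$ is right-continuous (so the number of summands is the right one just to the right of $t_i$) and each $X^{(k,n)}$ is right-continuous, while the left limit exists because each summand has a left limit and finitely many summands are involved just to the left of $t_i$. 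Since $I$ is a countable increasing union of such compacts $[a,b]$, a single a.s.\ event works for all of $I$.

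The only genuine point requiring care — and the one I would single out as the main (mild) obstacle — is the interchange of the ``infinitely many independent copies $X^{(k,n)}$'' with the ``almost sure'' statements: one must ensure that the event ``$X^{(k,n)}(b)<\infty$ for all $k$ that are ever used'' has probability one. This is handled by conditioning on $Z_n$ (which, by induction, is a.s.\ finite on each compact), so that only $Z_n(b)<\infty$ copies are relevant on $[a,b]$, and then intersecting over the countably many compacts exhausting $I$ and over $n$; each $X^{(k,n)}(b)$ is a.s.\ finite since it has finite mean $b$ under {\HYPR}. Everything else is the routine bookkeeping that a finite sum of c\`adl\`ag non-decreasing functions, composed with a piecewise-constant integer-valued c\`adl\`ag non-decreasing ``number of terms'', is again c\`adl\`ag and non-decreasing, which I would state cleanly as an auxiliary observation and then apply inductively.
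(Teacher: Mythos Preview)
Your proof is correct and follows the same inductive approach as the paper's, which merely observes that finiteness follows from the recursion \eref{eq:Zn+1lambda} and that the c\`adl\`ag and non-decreasing properties are ``clearly inherited from those of $X$''. You have spelled out in full the details the paper leaves implicit; in particular your treatment of the finitely many jump times on compacts and the careful handling of the varying summation range is exactly the routine verification the paper skips.
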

  \begin{proof} \eref{eq:Zn+1lambda} ensures that for each $(\lambda,n)$, $Z_{n+1}(\lambda)$ is a.s. finite, which implies it is well defined. The other properties are clearly inherited from those of $X$.
  \end{proof}
Good control on the increments of $X$ will be needed to control the moments of increments of $W_n$, which is central in our proof of convergence of $W_n$, mainly, in the tightness argument.
\centerline{------------------------------}
{\HYPM} : There exists $\kappa\in (\nicefrac12, 1)$ such that for any $[a,b]\subset I$, there exists a constant $C$ such that, for all $a\leq \lambda_1\leq \lambda_2\leq \lambda_3\leq b$
  \ben
  \Fac{x,y,z}(\lambda_1,\lambda_2,\lambda_3)&\leq& C(\lambda_3-\lambda_1)^{\kappa}, ~~\textrm{for }(x,y,z), 1\leq x+y+z \leq 4, y\geq 1 \textbf{ or } z\geq 1,\label{eq:dsdity}\\
  \Fac{0,y,z}(\lambda_1,\lambda_2,\lambda_3)&\leq& C(\lambda_3-\lambda_1)^{2\kappa}, ~~\textrm{for }(y,z),  1\leq y\leq 2 \textbf{ and } 1\leq z \leq 2.   \label{eq:dsdity2}
  \een
\centerline{------------------------------}
The following lemma, which we prove in Section~\ref{sec:proof_lem_simpl_cond}, 
gives sufficient conditions for \eref{eq:dsdity} and \eref{eq:dsdity2} to hold. 
These are convenient when checking~\eref{eq:dsdity} and~\eref{eq:dsdity2} in practice. 
\begin{lem}\label{lem:easy_HYPM}
{\HYPM} is equivalent to the following condition:  There exists $\kappa\in (\nicefrac12, 1)$ such that for any $[a,b]\subset I$, there exists a constant $C'$ such that for any $a\leq \lambda_1\leq \lambda_2\leq\lambda_3\leq b$,
 \begin{equation}\label{eq:easy1}
 \E\l[(\Delta X(\lambda_2))^2(\Delta X(\lambda_3))^2\r]\leq C'(\lambda_3-\lambda_1)^{2\kappa}\end{equation}
  and
  \begin{equation}\label{eq:easy2}
  \E\Big[(\Delta X(\lambda_3))X(\lambda_3)^3\Big]\leq C'(\lambda_3-\lambda_2)^\kappa.
  \end{equation}
\end{lem}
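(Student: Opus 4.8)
The plan is to prove the two implications separately by elementary estimates; the only thing to watch is the bookkeeping of which factorial moments $\Fac{x,y,z}$ and which (possibly degenerate) triples of times are involved. Two facts are used throughout. First, since $X$ is a.s.\ $\N$-valued and non-decreasing with $X(0)=0$, one has the pointwise inequalities $\Delta X(\lambda_1)\leq X(\lambda_2)\leq X(\lambda_3)$ and $\Delta X(\lambda_2)\leq X(\lambda_3)$. Second, for $m\in\N$ and $0\leq r\leq s$ we have $(m)_{(r)}\leq m^r\leq m^s$, so passing between ordinary and factorial powers (via Stirling numbers of the second kind) only introduces harmless nonnegative combinatorial constants. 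The value of $\kappa$ is never modified. Note also that the inequalities $\lambda_1\leq\lambda_2\leq\lambda_3$ in {\HYPM} are non-strict, so triples such as $(\lambda_2,\lambda_2,\lambda_3)$ and $(\lambda_1,\lambda_1,\lambda_2)$ are legitimate: for such a triple the middle increment vanishes, the first increment equals $X$ of the left time, and the last increment equals $\Delta X$ of the right time.

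For {\HYPM}$\,\Rightarrow\,$\eref{eq:easy1}: expanding $(\Delta X(\lambda_2))^2(\Delta X(\lambda_3))^2$ using $m^2=(m)_{(2)}+(m)_{(1)}$ and taking expectations gives $\Fac{0,2,2}+\Fac{0,2,1}+\Fac{0,1,2}+\Fac{0,1,1}$ at $(\lambda_1,\lambda_2,\lambda_3)$, and each of these four terms is $\leq C(\lambda_3-\lambda_1)^{2\kappa}$ by \eref{eq:dsdity2}. For {\HYPM}$\,\Rightarrow\,$\eref{eq:easy2}: apply {\HYPM} to the triple $(\lambda_2,\lambda_2,\lambda_3)$, for which $\Fac{x,0,z}(\lambda_2,\lambda_2,\lambda_3)=\E[(X(\lambda_2))_{(x)}(\Delta X(\lambda_3))_{(z)}]$. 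Writing $X(\lambda_3)=X(\lambda_2)+\Delta X(\lambda_3)$, expanding $(\Delta X(\lambda_3))X(\lambda_3)^3$ by the binomial theorem, and then converting each ordinary power to factorial powers, one obtains a finite nonnegative combination of terms $\Fac{x,0,z}(\lambda_2,\lambda_2,\lambda_3)$ with $z\geq 1$ and $x+z\leq 4$; by \eref{eq:dsdity} each is $\leq C(\lambda_3-\lambda_2)^\kappa$, the base being $\lambda_3-\lambda_2$ precisely because $\lambda_2$ is the left time of the triple.

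For the converse, \eref{eq:easy1}--\eref{eq:easy2}$\,\Rightarrow\,${\HYPM}: for \eref{eq:dsdity2}, since $1\leq y\leq 2$ and $1\leq z\leq 2$ we have $(\Delta X(\lambda_2))_{(y)}(\Delta X(\lambda_3))_{(z)}\leq(\Delta X(\lambda_2))^2(\Delta X(\lambda_3))^2$ pointwise, and \eref{eq:easy1} finishes. For \eref{eq:dsdity}, split on whether $z\geq 1$. If $z\geq 1$, bound $(\Delta X(\lambda_1))_{(x)}(\Delta X(\lambda_2))_{(y)}(\Delta X(\lambda_3))_{(z)}\leq X(\lambda_3)^{x+y}(\Delta X(\lambda_3))^z\leq(\Delta X(\lambda_3))X(\lambda_3)^{x+y+z-1}$; since $x+y+z-1\leq 3$ and the integrand vanishes on $\{X(\lambda_3)=0\}$, this is $\leq(\Delta X(\lambda_3))X(\lambda_3)^3$, and \eref{eq:easy2} gives $\E[(\Delta X(\lambda_3))X(\lambda_3)^3]\leq C'(\lambda_3-\lambda_2)^\kappa\leq C'(\lambda_3-\lambda_1)^\kappa$. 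If $z=0$ (so $y\geq 1$), the same argument with $\lambda_2$ replacing $\lambda_3$ gives $\Fac{x,y,0}\leq\E[(\Delta X(\lambda_2))X(\lambda_2)^3]$, and applying \eref{eq:easy2} to the triple $(\lambda_1,\lambda_1,\lambda_2)$ bounds this by $C'(\lambda_2-\lambda_1)^\kappa\leq C'(\lambda_3-\lambda_1)^\kappa$.

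No step is genuinely hard; I expect the main obstacle to be purely clerical. The one point needing attention is the discrepancy between the base $\lambda_3-\lambda_2$ of the bound in \eref{eq:easy2} and the larger base $\lambda_3-\lambda_1$ in \eref{eq:dsdity}: in both directions this forces one to isolate the increment over $[\lambda_2,\lambda_3]$ (resp.\ $[\lambda_1,\lambda_2]$) and to invoke {\HYPM} (resp.\ \eref{eq:easy2}) with the left endpoint of that interval serving as the first time of the triple. The other thing to verify is mechanical: that every factorial moment produced by the expansions does satisfy the index constraints of {\HYPM}, i.e.\ $1\leq x+y+z\leq 4$ with $y\geq 1$ or $z\geq 1$ for \eref{eq:dsdity}, and $1\leq y\leq 2$, $1\leq z\leq 2$ for \eref{eq:dsdity2}.
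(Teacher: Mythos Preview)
Your proposal is correct and follows essentially the same approach as the paper's proof: in both directions the argument rests on comparing factorial and ordinary moments via $(m)_{(r)}\leq m^r\leq m^s$ for $\N$-valued $m$ and $r\leq s$, together with the monotonicity $\Delta X(\lambda_j)\leq X(\lambda_3)$. Your presentation is in fact slightly cleaner in two places: for {\HYPM}$\Rightarrow$\eref{eq:easy2} you work directly at the degenerate triple $(\lambda_2,\lambda_2,\lambda_3)$, which immediately yields the correct base $(\lambda_3-\lambda_2)^\kappa$, whereas the paper writes the full expansion \eref{eq:tdsfehze} at $(\lambda_1,\lambda_2,\lambda_3)$; and for \eref{eq:easy2}$\Rightarrow$\eref{eq:dsdity} you bound $\Fac{x,y,z}$ by a single pointwise inequality, while the paper expands the increments as signed sums and bounds term by term.
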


\subsection{Main results}\label{sec:refs}

For all time $\lambda$, we let $q_\lambda$ denote the non-increasing extinction probability of the process $(Z_n(\lambda),n\geq 0)$. We start by stating the convergence of the finite dimensional distribution (FDD) convergence of the process $W_n(\lambda)$ when $n\to+\infty$: this is an almost sure convergence.
\begin{prop} \label{prp:qsdqs} Assume {\HYPR}. For any $d\geq 1$, for $\lambda_1\leq \cdots \leq \lambda_d$ in $I$, $(W_n(\lambda_i),1\leq i \leq d)$ converges a.s.\ when $n\to+\infty$ to some $d$-tuple of non-negative random variables $(W(\lambda_i),1\leq i \leq d)$. Furthermore, if $\E(X(\lambda_i)\log^{+}(X(\lambda_i)))<+\infty$, then $q_{\lambda_i}=\P(W(\lambda_i)>0)>0$ and $\E(W(\lambda_i))=1$.
\end{prop}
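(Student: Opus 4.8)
The plan is to deduce Proposition~\ref{prp:qsdqs} from the one-dimensional Kesten \& Stigum theorem applied carefully to a well-chosen joint object, rather than to each marginal separately. First I would observe that, by construction, for each fixed $\lambda \in I$ the process $(Z_n(\lambda), n\geq 0)$ is an ordinary supercritical Galton-Watson process with offspring mean $\lambda$, so the Kesten \& Stigum theorem directly gives the almost sure convergence $W_n(\lambda) \to W(\lambda)$ for each $\lambda$, together with the dichotomy: $\E(X(\lambda)\log^+(X(\lambda)))<\infty$ implies $\E W(\lambda)=1$ and $\P(W(\lambda)>0) = 1-\P(\exists k: Z_k(\lambda)=0) = q_\lambda$ (note $q_\lambda$ here is what the paper calls the survival, i.e.\ the complement of the usual extinction probability; I'll keep the paper's convention). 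The content of the proposition beyond this is that the \emph{same} probability space carries all these limits simultaneously for a finite tuple $\lambda_1 \le \cdots \le \lambda_d$, which is essentially automatic once each coordinate converges a.s.: a finite intersection of a.s.\ events is a.s., so $(W_n(\lambda_i), 1\le i\le d) \to (W(\lambda_i), 1\le i \le d)$ a.s.\ as a vector. So the only real work is the per-coordinate Kesten \& Stigum input, and the statement about $q_{\lambda_i}$ and the mean.

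Concretely, the key steps in order are: (i) Fix $\lambda\in I$; check from Definition~\ref{def:GWprocess} and \HYPR{} that $(Z_n(\lambda),n\geq 0)$ satisfies $Z_0(\lambda)=1$ and $Z_{n+1}(\lambda) = \sum_{k=1}^{Z_n(\lambda)} X^{(k,n)}(\lambda)$ with the $X^{(k,n)}(\lambda)$ i.i.d.\ of mean $\E(X(\lambda))=\lambda>1$ --- i.e.\ it is literally the GW process of \eqref{eq:Zn+1} with offspring distribution the law of $X(\lambda)$. (ii) Apply the Kesten \& Stigum theorem quoted in the introduction: since $(W_n(\lambda))$ is a non-negative martingale (as noted after the theorem statement), $W_n(\lambda)$ converges a.s.\ to some $W(\lambda)\geq 0$ unconditionally; and if moreover $\E(X(\lambda)\log^+ X(\lambda))<\infty$ then $\E W(\lambda)=1$ and $\P(W(\lambda)=0)$ equals the extinction probability of $(Z_n(\lambda))$, whence $\P(W(\lambda)>0) = q_\lambda$ in the paper's notation. (iii) Intersect the a.s.\ convergence events over $i=1,\dots,d$ to get joint a.s.\ convergence of the $d$-tuple, and apply (ii) to each $\lambda_i$ for which the $X\log^+X$ condition holds.

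The main (and only genuine) obstacle is essentially bookkeeping rather than mathematics: one must be careful that the coupling across different $\lambda$'s does not interfere with the one-dimensional argument --- but it does not, because the Kesten \& Stigum conclusions for $W_n(\lambda)$ depend only on the law of the single process $(Z_n(\lambda),n)$, and a.s.\ convergence for each of finitely many coordinates yields a.s.\ convergence of the vector regardless of dependence. The one point deserving a line of care is matching conventions: the paper writes $q_\lambda$ for the \emph{survival} probability (non-increasing in $\lambda$, since as $\lambda$ grows the trees grow and extinction becomes less likely), whereas the classical statement phrases things via the extinction probability $q$; I would simply note $\P(W(\lambda)>0) = 1 - (\text{extinction prob.\ of } Z(\lambda)) = q_\lambda$. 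No moment hypotheses beyond those in the statement, and in particular none from \HYPM{}, are needed here --- \HYPM{} and the c\`adl\`ag structure only enter later, for the functional (tightness) part.
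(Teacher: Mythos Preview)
Your approach is correct and essentially identical to the paper's: apply the one-dimensional Kesten \& Stigum theorem to each coordinate $W_n(\lambda_i)$, then use that a finite intersection of almost-sure events is almost sure to upgrade coordinatewise a.s.\ convergence to joint a.s.\ convergence of the $d$-tuple. One small remark: your opening sentence says you will apply Kesten \& Stigum ``to a well-chosen joint object, rather than to each marginal separately,'' but your actual steps (i)--(iii) do exactly the opposite (and correctly so); and regarding $q_\lambda$, the paper in fact \emph{defines} it as the extinction probability (``non-increasing extinction probability''), so the equality $q_{\lambda_i}=\P(W(\lambda_i)>0)$ in the proposition statement appears to be a typo in the paper rather than a nonstandard convention---your instinct that something is off there is right, but the resolution is a typo, not a redefinition.
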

\begin{proof} Kesten \& Stigum's theorem implies the result for each marginal. 
Now, on any probability space on which are defined 
some random variables $\alpha, (\alpha_i, i\geq 0), \beta,(\beta_i, i\geq 0)$, 
if $\alpha_n\as \alpha$ and $\beta_n\as \beta$ then $(\alpha_n,\beta_n)\as (\alpha,\beta)$.
\end{proof}

To state our main result, we use the following convention:
\ben\label{eq:conv} 
\lambda_0&=&X(0)=W(0)=W_n(0)=Z_0(0)=0 
\een
even if we use, in general $X(\lambda)$, $W_n(\lambda)$ and $W(\lambda)$ for $\lambda>1$ elsewhere (notice, for example that $Z_0(\lambda)=1$ for $\lambda \in I$, but we set $Z_0(0)=0$).
These conventions are only used to work more easily with increments (for example, $\Delta W_n(\lambda_i)=W_n(\lambda_{i})-W_n(\lambda_{i-1})=W_n(\lambda_1)$ when $i=1$).

\begin{thm} \label{thm:Geninf} 
If the offspring process $X$ satisfies {\HYPR} and {\HYPM},
then
\[(W_n(\lambda), \lambda \in I) \xrightarrow[n\to+\infty]{proba.} (W(\lambda),\lambda \in I)\]
in $D(I, \R^+)$ equipped with the Skorokhod topology on each compact subsets of $I$, where the process $W$ has a distribution characterised by the following properties:\\
$\bullet$ for any $\lambda\in I$, $\E(W(\lambda))=1$,\\
$\bullet$ its FDD are solution to the following fixed point equation: 
for any $d\geq 1$, any $\lambda_1\leq \cdots \leq \lambda_d$ in $I$,
\ben\label{eq:recGeninf-distr}
\big(W(\lambda_1), \ldots, W(\lambda_d)\big) \eqd \Bigg(\frac1{\lambda_1} \sum_{i=1}^{X(\lambda_1)} W^{\sss (i)}(\lambda_1), \ldots, 
\frac1{\lambda_d} \sum_{i=1}^{X(\lambda_d)} W^{\sss (i)}(\lambda_d)\Bigg),  
\een
where, on the right-hand side, $\big(W^{\sss (i)}(\lambda_1), \ldots, W^{\sss (i)}(\lambda_d)\big)_{i\geq 1}$ are i.i.d.\ copies of $(W(\lambda_1), \ldots, W(\lambda_d))$, independent of $X$.
\end{thm}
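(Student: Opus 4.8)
The plan is to establish two ingredients separately and then combine them: (i) convergence of the finite-dimensional distributions, which is already in hand from Proposition \ref{prp:qsdqs}, together with the fixed-point characterisation of the limiting FDD; and (ii) tightness of the sequence of processes $(W_n(\lambda),\lambda\in I)_{n\ge0}$ in $D([a,b],\R^+)$ for every compact $[a,b]\subset I$, equipped with the Skorokhod topology. By the standard portmanteau argument (e.g.\ Billingsley), FDD convergence plus tightness gives convergence in distribution in $D([a,b],\R^+)$, and since the limit process $W$ is in fact measurable with respect to the same $\sigma$-field generated by the $X^{(k,n)}$'s (its FDD being the a.s.\ limits of the $W_n$'s FDD), one upgrades convergence in distribution to convergence in probability: indeed, on the underlying probability space, $W_n(\lambda_i)\to W(\lambda_i)$ a.s.\ for every finite tuple, so every subsequential Skorokhod limit must coincide with $W$, and a tight sequence with a unique subsequential limit converges in probability. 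The fixed-point equation \eref{eq:recGeninf-distr} follows by decomposing $Z_{n+1}(\lambda_j)=\sum_{k=1}^{X^{(k,0)}(\lambda_j)}$ over the subtrees rooted at the children of the root: conditionally on the first generation $(X^{(1,0)}(\lambda),\lambda\in I)$, the subtrees are i.i.d.\ copies of the whole process, so $W_{n+1}(\lambda_j)=\lambda_j^{-1}\sum_{i=1}^{X(\lambda_j)}W_n^{\sss(i)}(\lambda_j)$ for jointly the $\lambda_j$'s; letting $n\to\infty$ and using FDD convergence on both sides (the right side converges since $X(\lambda_d)<\infty$ a.s.\ and each inner term converges jointly) yields \eref{eq:recGeninf-distr}. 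That $\E(W(\lambda))=1$ for all $\lambda\in I$ comes from Proposition \ref{prp:qsdqs} once we note {\HYPM} (via \eref{eq:easy2}) forces $\E(X(\lambda)\log^+X(\lambda))<\infty$; uniqueness of the law of $W$ given these two properties is essentially Seneta's theorem applied marginally, propagated to FDD by the fixed-point equation.

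For tightness, the natural route is a Kolmogorov–Chentsov / Billingsley moment criterion for $D([a,b],\R^+)$: it suffices to exhibit, for some constants and some exponent, a bound of the form
\[
\E\!\left[\,|\Delta W_n(\lambda_2)|^{\,\beta_1}\,|\Delta W_n(\lambda_3)|^{\,\beta_2}\,\right]\le C\,(\lambda_3-\lambda_1)^{1+\delta}
\]
for all $a\le\lambda_1\le\lambda_2\le\lambda_3\le b$ and all $n$, uniformly, where $\Delta W_n(\lambda_j)=W_n(\lambda_j)-W_n(\lambda_{j-1})$; the exponent $2\kappa>1$ appearing in {\HYPM} is exactly what produces the $1+\delta$ with $\delta=2\kappa-1>0$. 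Concretely I would aim to control $\E[(\Delta W_n(\lambda_2))^2(\Delta W_n(\lambda_3))^2]$, or a mixed second/first-moment variant, by induction on $n$. The recursion \eref{eq:Zn+1lambda} gives $\Delta W_{n+1}(\lambda_j)=\lambda_j^{-1}\sum_{k=1}^{Z_n(\lambda_j)}X^{(k,n)}(\lambda_j)-\lambda_{j-1}^{-1}\sum_{k=1}^{Z_n(\lambda_{j-1})}X^{(k,n)}(\lambda_{j-1})$, which one rewrites by separating (a) the "new individuals" $Z_n(\lambda_j)-Z_n(\lambda_{j-1})$ already present at generation $n$, (b) the increment $\Delta X^{(k,n)}(\lambda_j)$ of offspring of individuals common to both times, and (c) the normalisation mismatch $\lambda_j^{-1}$ vs $\lambda_{j-1}^{-1}$, which contributes a term of size $O(\lambda_j-\lambda_{j-1})$ times a bounded-moment quantity. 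Expanding the fourth moment of this sum and taking conditional expectations given generation $n$, the i.i.d.\ structure across $k$ turns the mixed moments into the factorial moments $\Fac{x,y,z}$ of the increments of $X$ — precisely the quantities bounded in \eref{eq:dsdity}–\eref{eq:dsdity2} — multiplied by moments of $Z_n(\lambda_i)/\lambda_i^n$ and of increments $\Delta W_n$, which feed the induction. One also needs a uniform-in-$n$ bound on $\E[W_n(\lambda)^4]$ (and on $\E[W_n(\lambda)^3\Delta W_n]$ etc.), which follows from a separate, simpler induction using that $\lambda>1$ gives a contraction factor $\lambda^{-2}\,\E[X(\lambda)^2\wedge\cdots]$ and the finite moments guaranteed by {\HYPM}; here \eref{eq:easy2} is what bounds the "boundary" terms of type (a)–(b).

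The main obstacle is the bookkeeping in this fourth-moment induction: one must show the recursion for $\E[(\Delta W_{n+1})^2(\Delta W_{n+1})^2]$ is of the schematic form $u_{n+1}\le \rho\,u_n + C(\lambda_3-\lambda_1)^{2\kappa}$ with a contraction constant $\rho<1$ coming from the $\lambda^{-4}$ normalisation against $\E[(\Delta X)^2](\le \lambda_3-\lambda_1$-type bounds) — and crucially that the "source term" carries the full exponent $2\kappa$, not merely $\kappa$. This is where the two-tier structure of {\HYPM} matters: the terms in the fourth-moment expansion that involve two independent increment-factors (one from a $\lambda_2$-slot, one from a $\lambda_3$-slot, each squared) are governed by \eref{eq:dsdity2} with the good exponent $2\kappa$, whereas the genuinely "diagonal" terms (a single increment factor to a high power) only get \eref{eq:dsdity} with exponent $\kappa$ but come multiplied by an extra small factor — e.g.\ another $(\lambda_3-\lambda_1)^\kappa$ from a second, independent source, or from the normalisation mismatch — so that their product again reaches $2\kappa$. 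Disentangling exactly which cross-terms survive, checking that none of them is merely $O((\lambda_3-\lambda_1)^\kappa)$ with $\kappa<1$ (which would break the criterion), and verifying the contraction constant is $<1$ uniformly on $[a,b]$, is the delicate part; everything else is routine. A final minor point is that Skorokhod tightness on $D([a,b],\R^+)$ with the moment criterion also requires controlling the behaviour near the endpoints and ruling out jumps accumulating, but monotonicity of $\lambda\mapsto W_n(\lambda)$ — which is not automatic since $\lambda\mapsto Z_n(\lambda)$ is monotone but $\lambda\mapsto\lambda^{-n}$ is decreasing — must be replaced by the moment bound on increments of $W_n$ itself, which the above provides, so the criterion applies directly.
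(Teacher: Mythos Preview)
Your overall strategy---FDD convergence from Proposition~\ref{prp:qsdqs} plus tightness via the Billingsley moment criterion $\E[(\Delta W_n(\lambda_2))^2(\Delta W_n(\lambda_3))^2]\le C(\lambda_3-\lambda_1)^{2\kappa}$, established by induction on $n$---is exactly the paper's. But two steps are set up incorrectly.

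\textbf{The recursion is taken at the wrong generation.} You condition on generation~$n$ and express $\Delta W_{n+1}$ through the $X^{(k,n)}$ for $k\le Z_n(\lambda_j)$. This is a valid identity (modulo a missing $\lambda_j^{-n}$ in your display), but it does \emph{not} produce the schematic contraction $u_{n+1}\le\rho\,u_n+C(\lambda_3-\lambda_1)^{2\kappa}$ with $\rho<1$ that you announce: by the martingale property $\E[\Delta W_{n+1}(\lambda_j)\mid\mathcal F_n]=\Delta W_n(\lambda_j)$, the leading term after expanding and taking expectations is $u_n$ with coefficient exactly~$1$. The paper instead decomposes at the \emph{first} generation, writing $W_{n+1}(\lambda_j)=\lambda_j^{-1}\sum_{i=1}^{X(\lambda_j)}W_n^{(i)}(\lambda_j)$ with the $W_n^{(i)}$ i.i.d.\ copies of $W_n$ indexed by the children of the root. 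Expanding $(\Delta W_{n+1}(\lambda_2))^2(\Delta W_{n+1}(\lambda_3))^2$ this way and using a combinatorial lemma that rewrites moments of such random sums in terms of the factorial moments $\Fac{d_1,d_2,d_3}$ isolates a single diagonal term $\frac{\lambda_1}{(\lambda_2\lambda_3)^2}\,u_n$, whose coefficient is $\le 1/a^3<1$ on $[a,b]$. The remaining (several hundred) terms are each bounded by $C(\lambda_3-\lambda_1)^{2\kappa}$ using {\HYPM} together with auxiliary uniform bounds on the lower mixed moments $\E\big[(\Delta W_n(\lambda_i))^{k}\prod_j W_n(\lambda_j)^{m_j}\big]$ for $k+\sum m_j\le 4$, themselves proved by the same first-generation induction. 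Your last-generation route could in principle be salvaged by proving $\sum_n\text{source}_n\le C(\lambda_3-\lambda_1)^{2\kappa}$, but that is a different and more delicate computation than the contraction you sketch, and none of the bookkeeping you describe matches it.

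\textbf{The upgrade to convergence in probability is not justified.} Tightness together with a unique subsequential distributional limit yields convergence in \emph{distribution} only; that $W$ lives on the same probability space does not close the gap by itself. The paper proves a dedicated lemma: if $(T_n)$ is tight in $D([a,b])$ and $(T_n(\lambda_1),\ldots,T_n(\lambda_d))\to(T(\lambda_1),\ldots,T(\lambda_d))$ almost surely for every finite tuple, then $T_n\to T'$ in probability in $D([a,b])$ for the Skorokhod metric, where $T'$ is the c\`adl\`ag modification of $T$. The proof constructs the time-change $\varpi$ explicitly, using the modulus-of-continuity control supplied by tightness together with the a.s.\ convergence at finitely many points of a dense set of continuity points. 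You need this argument or an equivalent one; the sentence ``a tight sequence with a unique subsequential limit converges in probability'' is false as stated.

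A smaller point: uniqueness of the FDD of $W$ given mean~$1$ is not ``Seneta marginally, propagated''; the paper proves it by showing the map $\Psi$ on laws with mean $(1,\ldots,1)$ and finite second moments is a contraction in the $L^2$ Wasserstein metric.
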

\begin{rem}\bir
    \itr We require $I\subset (1,+\infty)$ so that, for each $\lambda \in I$, the GW process $(W_n(\lambda),n\geq 0)$ is supercritical, but the subcritical case can be treated too, but it is trivial. For $\lambda< 1$, $(W_n(\lambda))$ converges a.s. to 0, so that $W_n(\lambda)/\lambda^n$ converges to 0 in $D[0,1)$ on all compact set, and this is true also at $1$, if we exclude the case $\P(X(1)=1)=1$.
   \itr The point of view ``convergence of Fourier transforms'' is discussed in  Section \ref{sec:CFT}.
  \itr The method we use to prove convergence uses a tightness argument: for any $\varepsilon>0$, there exists a compact $K$ of $D([a,b])$ such that $\P(W_n\in K)\geq 1-\varepsilon$ for every $n$. This argument is not strong enough to prove almost sure convergence of $(W_n)$, but it can be conjectured that almost sure convergence holds, possibly under additional regularity assumptions on $X$.
\itr The sufficient condition {\HYPM} comes from our proof strategy using control of moments; it is probably not optimal.
In  \Cref{sec:ECM} we explain that the moments can be exactly computed, 
but because of their complexity, this calculation does not lead to an explicit criterion (which, however would also need fourth moments for $X$, when it is not clear that they are needed).
\itr The process $W$ is in $D(I,\R^+)$, so that it has at most countably-many discontinuities (see Billingsley~\cite[Section 13]{billin}). As $\lambda$ grows, more and more individuals appear in the family tree. When a new node appears,  it appears together with an infinite subtree with positive probability, which provokes a jump of the process~$W$. In fact after leaving~0, the set of jumps of $W$ is dense in~$I$... so that $W$ stops to be continuous as soon as it leaves 0.
\eir
\end{rem}~\medbreak

To establish the convergence in $D(I,\R^+)$ from \Cref{prp:qsdqs}, we mainly need a tightness argument (see Section \ref{sec:tight}), and a lemma to deal with the convergence in probability (\Cref{lem:grfdq}).

\subsection{On the identification of the limiting process}
\label{seq:dqsdq}

As usual when dealing with martingales, 
we know the existence of the limit before knowing anything about it. 
Using a branching property argument similar to the one leading to~\eref{eq:qdhtsd}, it is possible to characterise the limit as the solution of a fixed point equation (as expressed in  \Cref{thm:Geninf} and~\Cref{pro:qdsqhsd}).

Indeed, as in the 1-D case, generation~$n+1$ is formed by the sum of the descendants after $n$ generations of the children of the root: fix $[a,b]\subset I$, $d\geq 1$ and $(\lambda_1,\cdots,\lambda_d)$ such that $a\leq \lambda_1\leq \cdots \leq \lambda_d\leq b$. For all $n$, jointly for $1\leq i \leq d$,
\[Z_{n+1}(\lambda_i) = \sum_{i=1}^{X(\lambda_i)} Z_n^{(i)}(\lambda_i),\]
where $(Z_n^{(i)}\colon n\geq 0)_{i\geq 1}$ is a sequence of i.i.d.\ copies of $(Z_n\colon n\geq 0)$, independent of the offspring process $X$.
This implies that, jointly for $1\leq i\leq d$,
\ben\label{eq:grfdqdrz}
W_{n+1}(\lambda_i) = \frac1{\lambda_i}\sum_{i=1}^{X(\lambda_i)} W_n^{(i)}(\lambda_i),\een
where $(W_n^{(i)}\colon n\geq 0)_{i\geq 1}$ is a sequence of i.i.d.\ copies of $(W_n\colon n\geq 0)$, independent of the offspring process $X$.
Taking the limit as $n\to+\infty$ (since this limit exists by Kesten \& Stigum), we get that the limit satisfies~\eref{eq:recGeninf-distr} (see also Proposition \ref{pro:qdsqhsd}).
For the same reason as in the 1-dimensional case, Equation~\eref{eq:recGeninf-distr} does not characterise the law of {$(W(\lambda_1),\cdots,W(\lambda_d))$}. 
The law of {$(W(\lambda_1),\cdots,W(\lambda_d))$} is characterised as the unique solution of~\eref{eq:recGeninf-distr} having constant mean~1 and finite second moments thanks to the following lemma (proved in \Cref{sec:RP}) in which $\mathcal M_2(1, \ldots, 1)$ denotes 
the set of probability distributions on $[0,\infty)^d$ having mean $(1, \ldots, 1)$ and whose marginals all have finite second moments. 

Note that, under the assumptions of Theorem~\ref{thm:Geninf}, $W$ indeed has constant mean~1, and finite second moment since, by~\cite[Theorem~2.0]{Liu96}, $\mathbb EX(\lambda)^2<+\infty$ implies $\mathbb EW(\lambda)^2<+\infty$. 
\begin{lem}\label{lem:unicite} Assume {\HYPR} and $\mathbb EX(\lambda)^2<+\infty$ for all $\lambda\in I$. 
Let $d\geq 1$ and $1<\lambda_1<\cdots < \lambda_d$ in $I$. We define $\Psi = \Psi_{\lambda_1, \ldots, \lambda_d} \colon \mathcal M_2(1, \ldots, 1) \to \mathcal M_2(1, \ldots, 1)$ as
\[\Psi(\mu) = \mathrm{Law}\left(
\frac1{\lambda_1}\sum_{i=1}^{X(\lambda_1)} U_1^{\sss (i)}, \ldots,
\frac1{\lambda_d} \sum_{i=1}^{X(\lambda_d)} U_d^{\sss (i)}
\right),\]
where the $((U_1^{\sss (i)}, \ldots, U_d^{\sss (i)}))_{i\geq 1}$'s are i.i.d.\ copies of $(U_1,\ldots, U_d)\sim \mu$, independent of the offspring process $X$.
Then $\Psi$ is a contraction for the $L^2$ Wasserstein metric, and in particular, $\Psi$ admits a unique fixed point in $\mathcal M_2(1, \ldots, 1)$.
\end{lem}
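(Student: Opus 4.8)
The plan is to prove that $\Psi$ is a contraction for the $L^2$-Wasserstein distance $\mathcal W_2$ on $\mathcal M_2(1,\ldots,1)$, and then to conclude existence and uniqueness of a fixed point by a completeness argument. First I would check that $\Psi$ indeed maps $\mathcal M_2(1,\ldots,1)$ into itself: linearity of expectation together with $\mathbb E X(\lambda_j)=\lambda_j$ gives that each coordinate of $\Psi(\mu)$ has mean $1$; and a standard variance decomposition for a random sum (conditioning on $X$), combined with the assumption $\mathbb E X(\lambda_j)^2<+\infty$ and the finiteness of the second moments of $\mu$, shows the coordinates of $\Psi(\mu)$ have finite second moments. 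I would also recall that $(\mathcal M_2(1,\ldots,1),\mathcal W_2)$ is a complete metric space (it is a closed subset of the complete space of probability measures on $[0,\infty)^d$ with finite second moments, since the mean-one constraint passes to $\mathcal W_2$-limits).

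The heart of the argument is the contraction estimate. Fix $\mu,\nu\in\mathcal M_2(1,\ldots,1)$ and take an optimal coupling $((U_1,\ldots,U_d),(V_1,\ldots,V_d))$ realising $\mathcal W_2(\mu,\nu)^2=\sum_{j=1}^d\mathbb E[(U_j-V_j)^2]$ (optimal couplings exist for $\mathcal W_2$ on $\mathbb R^d$). Then build a coupling of $\Psi(\mu)$ and $\Psi(\nu)$ by using the \emph{same} realisation of the offspring process $X$ and, for each $i\geq 1$, an independent copy $((U_1^{\sss(i)},\ldots,U_d^{\sss(i)}),(V_1^{\sss(i)},\ldots,V_d^{\sss(i)}))$ of that optimal coupling. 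Writing $S_j=\frac1{\lambda_j}\sum_{i=1}^{X(\lambda_j)}U_j^{\sss(i)}$ and $T_j=\frac1{\lambda_j}\sum_{i=1}^{X(\lambda_j)}V_j^{\sss(i)}$, the pair $((S_1,\ldots,S_d),(T_1,\ldots,T_d))$ is a coupling of $\Psi(\mu)$ and $\Psi(\nu)$, so $\mathcal W_2(\Psi(\mu),\Psi(\nu))^2\leq\sum_{j=1}^d\mathbb E[(S_j-T_j)^2]$. Now condition on $X$: since the summands $U_j^{\sss(i)}-V_j^{\sss(i)}$ are i.i.d.\ with mean $\mathbb E[U_j]-\mathbb E[V_j]=1-1=0$ (this is exactly where the mean-one normalisation is used), we get
\[
\mathbb E\big[(S_j-T_j)^2\,\big|\,X\big]=\frac{X(\lambda_j)}{\lambda_j^2}\,\mathbb E\big[(U_j-V_j)^2\big],
\]
and taking expectations over $X$ yields $\mathbb E[(S_j-T_j)^2]=\frac1{\lambda_j}\mathbb E[(U_j-V_j)^2]$. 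Summing over $j$ and bounding $1/\lambda_j\leq 1/\lambda_1<1$ (recall $\lambda_1>1$), we obtain
\[
\mathcal W_2(\Psi(\mu),\Psi(\nu))^2\leq\frac1{\lambda_1}\sum_{j=1}^d\mathbb E[(U_j-V_j)^2]=\frac1{\lambda_1}\,\mathcal W_2(\mu,\nu)^2,
\]
so $\Psi$ is a $\lambda_1^{-1/2}$-Lipschitz contraction. The Banach fixed point theorem on the complete metric space $(\mathcal M_2(1,\ldots,1),\mathcal W_2)$ then gives a unique fixed point.

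I expect the only genuinely delicate point to be the soft functional-analytic bookkeeping rather than the computation: namely verifying carefully that $\mathcal M_2(1,\ldots,1)$ is $\mathcal W_2$-complete (one must argue that $\mathcal W_2$-convergence together with uniform integrability of the second moments — which is automatic along a Cauchy sequence — forces convergence of first moments, so the mean-one constraint is preserved in the limit), and making sure the ``same $X$, independent copies of the optimal coupling'' construction is measurable and genuinely has the right marginals. The moment computation itself, the mean-zero cancellation of cross terms, and the final $1/\lambda_1$ bound are routine; the contraction constant $\lambda_1^{-1/2}<1$ comes for free from the supercriticality hypothesis $I\subset(1,\infty)$.
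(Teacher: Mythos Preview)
Your proposal is correct and follows essentially the same route as the paper: build a coupling of $\Psi(\mu)$ and $\Psi(\nu)$ by using the same $X$ and i.i.d.\ copies of an optimal coupling of $(\mu,\nu)$, condition on $X$, exploit the mean-zero property of $U_j-V_j$ to kill cross terms, and arrive at the contraction factor $\lambda_1^{-1/2}$. If anything you are more careful than the paper, which does not spell out that $\Psi$ preserves $\mathcal M_2(1,\ldots,1)$ or the completeness argument needed to invoke Banach's fixed point theorem.
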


\subsubsection{Convergence of  the FDD of $W_n$ with Fourier transforms}
\label{sec:CFT}
We introduce some tools that will play a role in the tightness proof and in some explicit computations that follow. 
For all sequences $(y_i, i \in\cro{a,b})$ indexed by any interval $\cro{a,b}=[a,b]\cap \mathbb{Z}$, the corresponding increment sequence is denoted 
\[\Delta y_i := y_{i}-y_{i-1}, ~~~~~\textrm{for } i \in \cro{a+1,b}.\]
We often write $y\cro{a,b}$ instead of $(y_a,y_{a+1},\cdots,y_b)$.
For all integers $d\geq 1$ and real numbers $\lambda_1<\lambda_2  <\cdots < \lambda_d$ in $I$,
consider the following generating function of the FDD of $X$ and of its increments:
\be
\f_{\lambda\cro{1,d}}( z\cro{1,d}) &:=& \E\l[\prod_{j=1}^d z_i^{X(\lambda_i)}\r],\\
\f^{\Delta}_{\lambda\cro{1,d}}( z\cro{1,d})&:=&
\E\l[\prod_{j=1}^d z_j^{\Delta X(\lambda_j)}\r].
\ee
These generating functions are at least defined on $\overline{B_{\C}(0,1)^d}$ (and of course, each of them can be expressed with the other).
Define the Fourier transform of $(W_n(\lambda_i),1\leq i \leq d)$ by
\ben\label{eq:ftPhin1}
\Phi^{(n)}_{\lambda\cro{1,d}}(x\cro{1,d})&:=&\E\l[\,\exp\l({\rm i}\,\sum_{j=1}^d x_j W_n(\lambda_j)\r)  \,\r].
\een
For any integers $r$ and $d$ such that $1\leq r\leq d$, and any sequence $(x_1,\ldots,x_d)$ define 
\ben\label{eq:dqsdqeh}
\U{x}{\lambda}{r}{d}  = \Big[\underbrace{0,\cdots,0}_{r-1 \textrm{ terms}}, {x_r}/{\lambda_r}, \cdots, {x_d}/{\lambda_d}\Big].\een
The following proposition provides a recursive way to compute $\Phi^{(n)}$.
\begin{prop}\label{pro:qdsqhsd}
For any $\lambda_1 < \lambda_2 <\cdots < \lambda_d$ in $I$, any $x\cro{1,d}\in\R^d$, 
\be
\Phi^{(0)}_{\lambda\cro{1,d}}(x\cro{1,d})= \exp\l[{\rm i} (x_1+\cdots+x_d)\r].
\ee
and for $n \geq 1$, 
\ben\label{eq:recGen}
\Phi^{(n)}_{\lambda\cro{1,d}}(x\cro{1,d})
&=& \f^{\Delta}_{\lambda\cro{1,d}}\l[
\Phi^{(n-1)}_{\lambda\cro{1,d}}(\U x\lambda{1}{d}),\cdots, \Phi^{(n-1)}_{\lambda\cro{1,d}}(\U x\lambda{d}{d})
\r].
\een
Moreover,
$\Phi^{(n)}_{\lambda\cro{1,d}}$ converges pointwise on $\mathbb{R}^d$ to a function $\Phi_{\lambda\cro{1,d}}$ fixed point equation of
\ben\label{eq:recGeninf}
\Phi_{\lambda\cro{1,d}}(x\cro{1,d}) &=& \f^{\Delta}_{\lambda\cro{1,d}}\l[ \Phi_{\lambda\cro{1,d}}(\U x\lambda1d),\cdots, \Phi_{\lambda\cro{1,d}}( \U x{\lambda}dd)\r].  
\een
\end{prop}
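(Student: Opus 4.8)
The plan is to establish the three assertions of Proposition~\ref{pro:qdsqhsd} in order: the formula at $n=0$, the recursion~\eref{eq:recGen}, and then the convergence together with the fixed-point equation~\eref{eq:recGeninf}. The first claim is immediate from~\eref{eq:conv}-style initial conditions: since $Z_0(\lambda)=1$ for all $\lambda\in I$, we have $W_0(\lambda_j)=1$ for every $j$, hence $\Phi^{(0)}_{\lambda\cro{1,d}}(x\cro{1,d})=\E[\exp({\rm i}\sum_j x_j)]=\exp[{\rm i}(x_1+\cdots+x_d)]$.

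For the recursion, I would start from~\eref{eq:grfdqdrz}, which expresses $W_{n+1}(\lambda_i)$ (jointly in $i$) as $\lambda_i^{-1}\sum_{k=1}^{X(\lambda_i)} W_n^{(k)}(\lambda_i)$ with $(W_n^{(k)})_{k\geq 1}$ i.i.d.\ copies of the whole process $W_n$, independent of $X$. The key manipulation is to reorganise the double indexing by the increments $\Delta X(\lambda_j)$: because $X$ is non-decreasing, the index set $\{1,\dots,X(\lambda_i)\}$ is the disjoint union over $j\le i$ of blocks of sizes $\Delta X(\lambda_j)$, so that $\sum_{k=1}^{X(\lambda_i)} W_n^{(k)}(\lambda_i) = \sum_{j=1}^{i}\sum_{k\in \text{block } j} W_n^{(k)}(\lambda_i)$. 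Conditioning on the vector $(\Delta X(\lambda_1),\dots,\Delta X(\lambda_d))$ and using independence of the copies $(W_n^{(k)})_k$, one gets
\[
\Phi^{(n+1)}_{\lambda\cro{1,d}}(x\cro{1,d})
= \E\!\left[\prod_{j=1}^d \Big(\Phi^{(n)}_{\lambda\cro{1,d}}\big(\U x\lambda j d\big)\Big)^{\Delta X(\lambda_j)}\right],
\]
since each copy $k$ sitting in block $j$ contributes, across all coordinates $i\ge j$, a factor $\exp({\rm i}\sum_{i\ge j}(x_i/\lambda_i)W_n^{(k)}(\lambda_i))$, whose expectation is exactly $\Phi^{(n)}_{\lambda\cro{1,d}}$ evaluated at the argument $\U x\lambda j d = [0,\dots,0,x_j/\lambda_j,\dots,x_d/\lambda_d]$. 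Recognising the right-hand side as $\f^{\Delta}_{\lambda\cro{1,d}}$ evaluated at $(\Phi^{(n)}_{\lambda\cro{1,d}}(\U x\lambda 1 d),\dots,\Phi^{(n)}_{\lambda\cro{1,d}}(\U x\lambda d d))$ gives~\eref{eq:recGen}. I would be careful to note that $\f^{\Delta}_{\lambda\cro{1,d}}$ is a priori only defined on $\overline{B_{\C}(0,1)^d}$, but since each $\Phi^{(n)}$ is a characteristic function it takes values in the closed unit disc, so the substitution is legitimate.

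Finally, the pointwise convergence of $\Phi^{(n)}_{\lambda\cro{1,d}}$ follows directly from Proposition~\ref{prp:qsdqs}: under {\HYPR}, $(W_n(\lambda_i),1\le i\le d)$ converges almost surely to $(W(\lambda_i),1\le i\le d)$, hence in distribution, so the characteristic functions converge pointwise on $\R^d$ to $\Phi_{\lambda\cro{1,d}}(x\cro{1,d}):=\E[\exp({\rm i}\sum_j x_j W(\lambda_j))]$. Passing to the limit in~\eref{eq:recGen} then yields~\eref{eq:recGeninf}: the left side converges to $\Phi_{\lambda\cro{1,d}}(x\cro{1,d})$; on the right side, each inner argument $\Phi^{(n-1)}_{\lambda\cro{1,d}}(\U x\lambda r d)$ converges to $\Phi_{\lambda\cro{1,d}}(\U x\lambda r d)$, and $\f^{\Delta}_{\lambda\cro{1,d}}$, being a power series with non-negative coefficients summing to~1, is continuous on the closed polydisc $\overline{B_{\C}(0,1)^d}$ (by Abel's theorem / dominated convergence for the series), so the composition passes to the limit. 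The only mild subtlety — and the step I would treat most carefully — is this last continuity/interchange at the boundary of the polydisc, where the generating function $\f^{\Delta}_{\lambda\cro{1,d}}$ need not be analytic; invoking dominated convergence on the series expansion (dominant function $\sum_{\bs\beta}\P(\Delta X(\lambda_1)=\beta_1,\dots)=1$) resolves it cleanly.
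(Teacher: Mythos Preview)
Your proposal is correct and follows essentially the same route as the paper's own proof: decompose $W_{n+1}(\lambda_i)$ via~\eref{eq:grfdqdrz}, regroup the sum according to the increment blocks $\Delta X(\lambda_j)$, condition on $(\Delta X(\lambda_1),\dots,\Delta X(\lambda_d))$ and use independence of the copies to factorise, then invoke Proposition~\ref{prp:qsdqs} for pointwise convergence and continuity of $\f^{\Delta}_{\lambda\cro{1,d}}$ on the closed polydisc to pass to the limit. Your treatment is in fact slightly more explicit than the paper's (you spell out the dominated-convergence justification for continuity of $\f^{\Delta}$ at the boundary), but the argument is the same.
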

As discussed in the beginning of \Cref{seq:dqsdq}, $\Phi_{\lambda\cro{1,d}}$ is moreover the unique solution to \eref{eq:recGeninf} with mean $(1,\cdots,1)$ if $X$ has a finite variance.

\subsection{On explicit computations}
\label{sec:qdqff}

There are two characteristics of the limiting process $W$ that are simple to compute:\\
$\bullet$ the law of $T_{(0,+\infty)}(W):=\inf \{\lambda: W_\lambda >0\}$ the entrance time of $W$  in $(0,+\infty)$ since,
\be
\P\l( T_{(0,+\infty)}(W)>x\r)= q_x,~~~ x\in I
\ee 
the extinction probability of the process $(Z_n(x))_{n\geq 0}$ (the smallest non-negative root of $q=\E(X(x)^{q})$).\\
$\bullet$ the joint moments $\E\l(\prod_{j=1}^m W_{\lambda_j}^{k_j}\r)$ for some fixed $m$, some fixed positive integers $(k_1, \ldots, k_j)$, fixed time $\lambda_1<\cdots<\lambda_m$ for which this quantity exists. 
This is detailed in Section \ref{sec:ECM}: it relies mainly on Lemma~\ref{lem:Mom} which allows to see that there are some polynomial relations between the $\E\l(\prod_{j=1}^m W_{\lambda_j}^{d_j}\r)$ for $d_j\leq k_j$, which can be linearized in all generality. 
Another method consists in extracting the moments using \eref{eq:recGeninf} (we present some of these computations in \eref{eq:rsgf}). \medskip

The computation of the FDD of the processes $W$ proves to be quite technical, since the main tool we have are the Formulae \eref{eq:recGeninf}, which are implicit. In dimension 1,2,3:
\ben
\nonumber \Phi_{\lambda_1} (x_1) &=& \f_{\lambda_1}^{\Delta}\l[\Phi_{\lambda_1}\l(\frac{x_1}{\lambda_1}\r)\r] \\
\label{eq:phi2}\Phi_{\lambda_1,\lambda_2} (x_1,x_2) &=& \f^{\Delta}_{\lambda_1,\lambda_2}\l[\Phi_{\lambda_1,\lambda_2}\l(\frac{x_1}{\lambda_1},\frac{x_2}{\lambda_2}\r),\Phi_{\lambda_1,\lambda_2}\l(0,\frac{x_2}{\lambda_2}\r)\r] \\
\nonumber \Phi_{\lambda[3]} (x_1,x_2,x_3) &=& \f^{\Delta}_{\lambda[3]}\l[\Phi_{\lambda[3]}\l(\frac{x_1}{\lambda_1},\frac{x_2}{\lambda_2},\frac{x_3}{\lambda_3}\r),\Phi_{\lambda[3]}\l(0,\frac{x_2}{\lambda_2},\frac{x_3}{\lambda_3}\r),\Phi_{\lambda[3]}\l(0,0,\frac{x_3}{\lambda_3}\r)\r].
\een
These equations are related, by consistence; the last equation can be rewritten
\[\Phi_{\lambda[3]} (x_1,x_2,x_3) = \f^{\Delta}_{\lambda[3]}\l[\Phi_{\lambda[3]}\l(\frac{x_1}{\lambda_1},\frac{x_2}{\lambda_2},\frac{x_3}{\lambda_3}\r),\Phi_{\lambda_2,\lambda_3}\l(\frac{x_2}{\lambda_2},\frac{x_3}{\lambda_3}\r),\Phi_{\lambda_3}\l(\frac{x_3}{\lambda_3}\r)\r].
\]
The identification of the marginal distributions is difficult too; write
\ben\label{eq:qdsqrzyu} \Phi_\lambda= q_\lambda+(1-q_{\lambda})\Psi_\lambda\een
where $\Psi_{\lambda}$ is the Fourier transform of ${\cal L}(W_\lambda~|~W_\lambda>0)$. Since $q_\lambda$ is known (implicitly, in general), computing $\Psi_\lambda$ is the only real issue: it is solution with mean $1/(1-q_\lambda)$ of
\ben\label{eq:gqdeqf}
q_\lambda+(1-q_{\lambda})\Psi_\lambda(x)= \f_\lambda\l[q_\lambda+(1-q_{\lambda})\Psi_\lambda(x/\lambda)\r],\een
so that
\be
\Psi_\lambda(x)&=&\frac{\f_\lambda\l[q_\lambda+(1-q_{\lambda})\Psi_\lambda(x/\lambda)\r]-q_\lambda}{1-q_\lambda},
\ee
and then, by expanding $f_\lambda(y)=\sum_{m\geq 0}\P(X_\lambda=m)y^m$, we find
\ben\label{eq:rgsgyjj}
\Psi_\lambda(x)= g_\lambda(\Psi_{\lambda}(x/\lambda))
\een
where $g_\lambda$ is the probability generating function of $p(\lambda)=(p_m(\lambda),m\geq 0)$ with
\ben\label{eq:dqsfe}
p_0(\lambda)=0, \textrm{ and, for }j\geq 1,~~ p_j(\lambda)=(1-q_{\lambda})^{j-1}\sum_{m\geq j} \P(X_\lambda=m)\binom{m}j q_\lambda^{m-j}.
\een
In words, $\Psi_{\lambda}$ is the Fourier transform of the limiting martingale of a second Galton-Watson process with offspring distribution $p(\lambda)$, which naturally extincts with probability 0 since $p_0(\lambda)=0$.\medbreak

We will discuss three examples in \Cref{sec:3examples}.

\begin{rem} 
An alternative equation on $\Psi_\lambda$ can be written using the spinal
decomposition of the GW process $(Z_n(\lambda), n\geq 0)$ conditioned on non extinction, 
which $(\widetilde{Z}_n(\lambda), n\geq 0)$ denotes.
Indeed, $\Psi_\lambda$ is the solution of $\Phi^{{\sf Sp}}_\lambda(x)/{(1-q_\lambda)}=\frac{1}{\rm i}\frac{\rm d}{{\rm d}x}\Psi_\lambda(x)$ with $\Psi_{\lambda}(0)=1$, 
where $\Phi^{{\sf Sp}}_\lambda$ is the Fourier transform of the limiting distribution of $\widetilde{Z}_n(\lambda)/\lambda^n$. 
Furthermore, by decomposition at the root, $\Phi^{{\sf Sp}}_\lambda$ is characterised as the solution of $\Phi_\lambda^{{\sf sp}}(x) =  \Phi_\lambda^{{\sf sp}}(x/\lambda) \tilde{f}_\lambda(\Phi(x/\lambda))$ where $ \tilde{f}$ is the probability generating function of $\widetilde{X}-1$, 
where  $\P(\widetilde{X}_\lambda=k)= k\P(X_\lambda=k)/\lambda\; (\forall k\geq 0)$. 
Using these two equations together, we get another another fixed point equation that characterises 
$\Psi_\lambda$. Except possibly in some particular cases, the formula obtained this way is not more convenient than~\eref{eq:gqdeqf}.
\end{rem}

\subsection{Discussion of the related literature}
\label{sec:refere}

Branching processes have been widely studied in probability theory.
They were originally introduced as models for the evolution of populations 
(see, e.g., Haccou \& al.\ \cite{HJV}, and Kimmel \& Axelrod \cite{KA}).
They appear also as combinatorial structures called  trees,
which are one of the simplest models for complex networks. 
Simple families of trees such as uniform planar rooted binary trees with~$n$ internal nodes,
uniform rooted planar trees with~$n$ nodes, 
and uniform rooted labeled trees with~$n$ are equal in distribution to Galton-Watson trees conditioned on having size~$n$. 
Their asymptotic behaviour is thus well-known (Aldous \cite{Aldous}, see also \cite{MM, LGM}). 
We refer also to Devroye~\cite{Devroye} where the theory of branching processes is applied to the analysis of models of random trees such as the binary search tree, Cayley trees, and Catalan trees. 
Branching processes are also a useful tool to study random graphs such as the Erd\H{o}s-Reyni random graph and scale-free random graphs such as the configuration model and the Bar\'abasi \& Albert model (see Bollob\'as \& Riordan~\cite{BR08} for a survey on using branching processes to analyse random graphs).
For a mathematical exposition of some of the existing results on branching processes, 
we refer the reader to, e.g., the surveys of Athreya and Ney \cite{AN}, Asmussen and Hering \cite{AsH}, 
and Lyons and Peres \cite{LP}, in chronological order.

\paragraph{Discussion on Bellman-Harris and Crump-Mode-Jagers processes:}
In this paper, we focus on discrete-time GW processes, meaning that for each $\lambda$,  $(Z_n(\lambda), n\geq 0)$ is a discrete-time Markov chain.
We do not cover the case of continuous-time GW processes (in which each individual has an exponentially-distributed life-time and creates offspring at its death) or their age-dependent generalisations called Bellman-Harris processes (in which the life-time has a non-exponential distributions - see, e.g.~\cite[Chapter~IV]{AN}). 
Another generalisation of continuous-time GW processes are the Crump-Mode-Jagers (CMJ) processes (see, e.g., Jagers~\cite{Jagers}, Nerman~\cite{Nerman}, or Jagers and Nerman~\cite{JN}) in which individuals can create offspring during their whole life-time, for example according to a Poisson process.
Most of these processes exhibit a martingale limit (in the CMJ case, only if the so-called Malthusian parameter exists, see~\cite{Nerman}); 
as far as we know, none of these continuous-time branching processes and their martingale limits have been studied as processes indexed by a parameter as we do here. 

\paragraph{Our model seen as a pruned multi-type Galton-Watson tree:}
It is possible to represent the family trees of our Galton-Watson processes at time $\lambda_1\leq \lambda_2\leq  \cdots\leq \lambda_k$ as pruned multi-type Galton-Watson trees (see, e.g.,\ Athreya \& Ney~\cite[Chapter~V]{AN} for a survey, and Janson \cite{Jan2} for recent limiting theorems).
Indeed, sample the GW tree $(Z_n(\lambda_k), n\geq 0)$,
and, for each node $u$ with offspring process $X_u$,
for all $1\leq i\leq k$, colour in colour~$i$ 
the children of $u$ that appeared when the time parameter belongs to $(\lambda_{i-1}, \lambda_i]$ (set $\lambda_0 = 0$).
The number of the children of~$u$ of colour~$i$ is given by $X_u(\lambda_i)-X_u(\lambda_{i-1})$. 
To get the family tree at time~$\lambda_i$ from this multi-type tree,
we remove all the nodes of color $\geq i+1$. 
However, it is unclear whether the theory of multi-type GW processes could be used to analyse the process $(Z_n(t), n\geq 0)_{t\in I}$.

\paragraph{Discussion on smoothing (or fixed point) equations:}
Fixed point equations analogous to~\eqref{eq:recGeninf-distr} are standard in the theory of branching processes (see, e.g., Liu~\cite{Liu98} and the references therein). 
They are called fixed point or smoothing equations. 
In Lemma~\ref{lem:unicite}, we use the so-called contraction method (see, e.g., R\"osler \& R\"uschendorf~\cite{RR01} for a survey, and Neininger \& Sulzbach~\cite{NS15} where the contraction method is used on functional spaces)
to show uniqueness of the solution with fixed mean and finite variance: 
it is quite straightforward in this case because almost sure convergence 
of $(W_n(\lambda))_{n\geq 1}$ as $n$ tends to infinity is known a priori.

\section{Three examples}

\label{sec:3examples}

\subsection{The binary coupling}
We call this the ``binary'' coupling because, for each time parameter~$\lambda$, 
the GW family tree associated to $(Z_n(\lambda), n\geq 0)$ is binary.
Let $U \sim \uniform[0,1]$ 
and consider the c\`adl\`ag process $(\Bin(\lambda), \lambda\in I_{\Bin})$, 
where
\be
\Bin(\lambda) &:=& 2\, \1_{U \leq \lambda/2}, \textrm{ for any }\lambda\in
I_{\Bin}:=(1,2].
\ee
The process $\Bin$ is a non-decreasing process that is constant in $I_\Bin$, except at the random time $\lambda=2U$ at which  it jumps from~0 to~2.
Moreover, since $\1_{U\leq \lambda/2}\sim \Bernoulli(\lambda/2)$, we have
\[\E \Bin(\lambda)=\lambda.\]

The interval $I_{\Bin}$ is the range of time parameters $\lambda$ for which $\E(\Bin(\lambda))>1$. 
Thus, $(\Bin(\lambda))_{\lambda\in I_\Bin}$ can be used as the offspring process  in Definition~\ref{def:GWprocess}.

To describe the distribution of $(\Bin(\lambda_1),\cdots,\Bin(\lambda_d))$  for $0\leq \lambda_1<\cdots <\lambda_d\leq 2$, set ${\sf FI}=\inf\{k : \Bin(\lambda_k)=2\}$, the first index where $\Bin(\lambda_k)$ is equal to~2; we have
\ben\label{eq:FI}
\P({\sf FI}=k)= (\lambda_k-\lambda_{k-1})/2, \textrm{ for }k\in\cro{1,d}
\een
and $\P({\sf FI}=+\infty)=\P(\Bin(\lambda_d)=0)=1-\lambda_{d}/2$.

 For $\lambda_1\leq \cdots \leq \lambda_m$ elements of $I_{\Bin}^m$,
\be
\f^{\Bin}_{\lambda_1,\cdots,\lambda_m}(x_1,\cdots,x_m)&=&(1-\lambda_m/2)+\sum_{k=1}^m \frac{\lambda_k-\lambda_{k-1}}2\prod_{i=k}^m x_i^2.
\ee
\begin{prop}The process $\Bin$ satisfies {\HYPR} and  {\HYPM}, so that \Cref{thm:Geninf} applies when the offspring process $X=\Bin$, on $I_{\Bin}=(1/2,1]$. 
  \end{prop}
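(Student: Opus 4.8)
The plan is to check {\HYPR} by a direct computation and to obtain {\HYPM} through \Cref{lem:easy_HYPM}, after which \Cref{thm:Geninf} applies directly to $X=\Bin$.

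First I would verify {\HYPR}. For every fixed value of $U$, the map $\lambda\mapsto\Bin(\lambda)=2\,\1_{U\leq\lambda/2}$ is a non-decreasing, $\{0,2\}$-valued step function, hence c\`adl\`ag with values in $\mathbb{N}$; and $\E\Bin(\lambda)=2\,\P(U\leq\lambda/2)=\lambda$ whenever $\lambda/2\leq1$. So {\HYPR} holds on the interval of parameters for which $\E\Bin(\lambda)>1$, that is for $\lambda>1$ (with $\lambda\leq2$ so that $\lambda/2$ is a legitimate success probability).

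Next I would turn to {\HYPM}. Rather than bounding all the factorial moments $\Fac{x,y,z}$ directly, I would invoke \Cref{lem:easy_HYPM} and only establish \eqref{eq:easy1} and \eqref{eq:easy2} for some (in fact any) $\kappa\in(\nicefrac12,1)$. The computation rests on the elementary identity, for $\lambda_{j-1}<\lambda_j$,
\[
\Delta\Bin(\lambda_j)=\Bin(\lambda_j)-\Bin(\lambda_{j-1})=2\,\1_{\lambda_{j-1}/2<U\leq\lambda_j/2},
\]
which shows that the increments of $\Bin$ at distinct times are $2$ times indicators of pairwise disjoint events; hence $\Delta\Bin(\lambda_2)\,\Delta\Bin(\lambda_3)=0$ almost surely, so the left-hand side of \eqref{eq:easy1} vanishes identically. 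For \eqref{eq:easy2} I would use that $\Bin\leq2$, so $\Bin(\lambda_3)^3\leq8$ and $\E[\Delta\Bin(\lambda_3)\,\Bin(\lambda_3)^3]\leq 8\,\E[\Delta\Bin(\lambda_3)]=8(\lambda_3-\lambda_2)$, and then absorb the linear bound into a power $\kappa<1$ via $t\leq(b-a)^{1-\kappa}t^\kappa$ for $t=\lambda_3-\lambda_2\in[0,b-a]$, which gives \eqref{eq:easy2} with $C'=8(b-a)^{1-\kappa}$.

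There is no genuine obstacle here: the binary coupling is built precisely so that each individual has at most two children, appearing at a single random time, which makes every relevant moment not merely finite but, for mixed increments, exactly zero. The only minor care needed is in the bookkeeping around \Cref{lem:easy_HYPM} (the degenerate cases $\lambda_1=\lambda_2$ or $\lambda_2=\lambda_3$, where the increments concerned vanish) and in pinning down the parameter range of $I_\Bin$ so that each $(Z_n(\lambda),n\geq0)$ is supercritical. With {\HYPR} and {\HYPM} in hand, \Cref{thm:Geninf} yields the convergence of $(W_n(\lambda),\lambda\in I_\Bin)$ in the Skorokhod topology, which is the assertion of the proposition.
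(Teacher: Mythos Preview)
Your proof is correct and follows essentially the same route as the paper: verify {\HYPR} directly, then obtain {\HYPM} via \Cref{lem:easy_HYPM} by noting that $\Delta\Bin(\lambda_2)\,\Delta\Bin(\lambda_3)=0$ a.s.\ (so \eqref{eq:easy1} is trivial) and that $\E[\Delta\Bin(\lambda_3)\Bin(\lambda_3)^3]=8\Delta\lambda_3$ (the paper computes this exactly, you bound it via $\Bin^3\le 8$, which gives the same constant). Your explicit conversion $t\leq (b-a)^{1-\kappa}t^\kappa$ to pass from the linear bound to the required $\kappa$-power is a small point the paper leaves implicit.
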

  \begin{proof} It is straightforward to check that {\HYPR}  holds.
  Now, we check {\HYPM}  using \Cref{lem:easy_HYPM}: we have
    $\Fac{0,2,2}{}= 0$ (since either
    $\Delta X(\lambda_2)=0$ or $\Delta X(\lambda_3)=0$), 
    which implies~\eqref{eq:easy1}.
To prove that~\eqref{eq:easy2} holds, write
 $\E_{\Bin}((\Delta X(\lambda_3))X(\lambda_3)^3)= 16\,\P_{\Bin}(X(\lambda_3)=1,X(\lambda_2)=0)=8\Delta \lambda_3$.
 \end{proof}

We now comment on the properties of the limiting process $(W(\lambda), \lambda\in I_{\Bin})$. We first look at the one-dimensional marginal distributions.
From Equation~\eqref{eq:qdhtsd}, we get that, for all $x\in\mathbb R$, $\lambda\in(1,2]$,
\[\Phi_{\lambda}(x)=1-\frac{\lambda}2+\frac{\lambda}2\Phi_{\lambda}(x/\lambda)^2.\]
In this case $q_{\lambda} = (2-\lambda)/\lambda$, and using \eref{eq:rgsgyjj} and \eref{eq:dqsfe} (or \eref{eq:gqdeqf}), $\Psi_\lambda$ is solution to
\be
\Psi_\lambda(x)=  (\lambda-1)\Psi_\lambda \left(  {x}/{\lambda} \right) ^{2}+(2-\lambda)\Psi_\lambda\left( { {x}/{\lambda}} \right),
\ee
so that $p(\lambda)$ is the distribution $(\lambda-1)\delta_2+(2-\lambda)\delta_1$. Hence, $\Psi_\lambda$ is the distribution of 
\[ Z  \eqd B_{\lambda-1} \frac{(Z+Z')}{\lambda}+ (1-B_{\lambda-1} )\frac{Z}{\lambda}=B_{\lambda-1}\frac{Z'}{\lambda}+\frac{Z}{\lambda}\eqd \sum_{j\geq 1} \frac{B_{\lambda-1}^{(j)} Z^{(j)}}{\lambda^j}\]
where $Z,Z',Z^{(1)},Z^{(2)},\cdots$ are i.i.d. copies of $Z$, and the $B_{\lambda-1}^{(j)}$ are i.i.d. Bernoulli random variable with time parameter $\lambda-1$, all these random variables are independent (and the $Z_j$ have mean $1/(1-q_\lambda)$). 
We were not able to find an explicit solution of $\Psi_\lambda$. The first moments of $W_\lambda$ conditioned to be $>0$ are
\[1,{\frac {\lambda /2}{\lambda-1}},{\frac {\lambda /2}{ \left( 
\lambda-1 \right) ^{2}}},{\frac {3\lambda/2}{ \left( \lambda+1
 \right)  \left( \lambda-1 \right) ^{3}}},{\frac {3\lambda\,
 \left( \lambda+5 \right)/2 }{ \left( \lambda+1 \right)  \left( \lambda-
1 \right) ^{4} \left( {\lambda}^{2}+\lambda+1 \right) }},{\frac 
{15\lambda\, \left( 2\,{\lambda}^{2}+3\,\lambda+7 \right) /2}{ \left( {
\lambda}^{2}+1 \right)  \left( {\lambda}^{2}+\lambda+1 \right) 
\left( \lambda+1 \right) ^{2} \left( \lambda-1 \right) ^{5}}}\]
 and $\E(W_\lambda|W_\lambda>0)={\frac {\lambda}{2(\lambda-1)}}$, and its variance is 
$\frac{\lambda(2-\lambda)}{4(\lambda-1)}$ which goes to $\infty$ at 1, and is 0 at 2 (since $W_2=2$ a.s.).\par
We were not able to say something interesting on the $2$-dimensional marginal distributions.

\subsection{The geometric coupling}

Geometric random variables ({with support $\{0,1,2,\cdots\}$}) are important in GW theory because,
conditionally on the total number of nodes $n$, 
the family tree of a GW process with this offspring distribution is uniform 
among the set of (unlabelled) trees with $n$ nodes 
(and this holds for any parameter $\notin\{0,1\}$ of the geometric distribution).

Let $\Ber=(\Ber(\lambda),0\leq \lambda\leq1)$, where $\Ber(\lambda) = 1_{U\leq \lambda}$ and $U\sim {\sf Uniform}[0,1]$. Let $(\Ber^{(i)}, i\geq 0)$ be a sequence of independent copies of $\Ber$.
Define 
\be
\Geom(\lambda)& = &\inf\l\{i\in\{0,1,2,\cdots\}: \Ber^{(i)}\l(\frac{1}{1+\lambda}\r)=1\r\}, \textrm{ for } \lambda \in I_{\Geom}       := (1,+\infty),
\ee
the first success in this sequence of Bernoulli trials; we have $\P(\Geom(\lambda)=k)=\lambda^{k}/(1+\lambda)^{k+1}$, and
\[\E\Geom(\lambda)=\lambda.\] 
The inversion $\lambda\mapsto 1/(1+\lambda)$ allows to get a non-decreasing process, 
while the corresponding processes $\Ber^{(i)}\l(1/(1+\lambda)\r)$ are non-increasing. 
If the sequence $(\lambda_1,\cdots,\lambda_d)$ is non decreasing, 
then the sequence $\Geom(\lambda_1),\cdots,\Geom(\lambda_d)$ is a Markov chain. 
Indeed, given $\Geom(\lambda_{j-1})$, 
\[\Geom(\lambda_{j})
\eqd \Geom(\lambda_{j-1})+\Ber\l[\frac{\lambda_j-\lambda_{j-1}}{1+\lambda_{j}}\r](1+\Geom^\star(\lambda_{j})),\]
where the random variables in the right hand side are all independent, 
so that
\ben\label{eq:geom}\l[\Geom(\lambda_1),\cdots,\Geom(\lambda_d)\r]\eqd 
\Bigg[G_1+\sum_{i=2}^j \Ber^{(i)}\l[\frac{\lambda_j-\lambda_{j-1}}{1+\lambda_{j}}\r](1+G_i),1\leq j \leq d\Bigg]\een
where the $G_j$ are distributed as $\Geom(\lambda_j)$, and all the variables $G_j$ and  $\Ber^{(i)}$ are independent.
For $\lambda_1\leq \cdots \leq \lambda_m$ elements of $I_{\Geom}^m$ 
\be
\f^{\Geom}_{\lambda_1,\cdots,\lambda_m}(x_1,\cdots,x_m)&=& G_{\lambda_1}\l(\prod_{i=1}^m x_i\r) \prod_{j=2}^m \l[\frac{1+\lambda_{j-1}}{1+\lambda_{j}}+ \frac{\lambda_j-\lambda_{j-1}}{1+\lambda_{j}} G_{\lambda_j}\l(\prod_{i=j}^m x_j\r)\prod_{i=j}^m x_j \r],
\ee
where $G_j(x)=1/(1+(1-x)\lambda)$ is the generating function of $\Geom(\lambda)$.
\begin{prop}The process $\Geom$ satisfies {\HYPR} and  {\HYPM}, so that \Cref{thm:Geninf} applies when the offspring process $X=\Geom$, on $I_{\Geom}=(1,+\infty)$. 
  \end{prop}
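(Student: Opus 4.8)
The plan is to verify the hypotheses {\HYPR} and {\HYPM} for the offspring process $X=\Geom$ on $I_{\Geom}=(1,+\infty)$; \Cref{thm:Geninf} then applies directly. Checking {\HYPR} is immediate: for each fixed $\lambda$ the variable $\Geom(\lambda)$ is $\mathbb N$-valued with $\P(\Geom(\lambda)=k)=\lambda^k/(1+\lambda)^{k+1}$, so $\E(\Geom(\lambda))=\lambda>1$; and since $\lambda\mapsto 1/(1+\lambda)$ is decreasing, each $\Ber^{(i)}(1/(1+\lambda))$ is non-increasing in $\lambda$, hence the index $\Geom(\lambda)$ of the first success is non-decreasing. (The path $\lambda\mapsto\Geom(\lambda)$ is a non-decreasing $\mathbb N$-valued step function; one works with its right-continuous version, which coincides with it a.s.\ at each fixed $\lambda$ and thus has the same finite-dimensional laws.) The real work is {\HYPM}, which I would establish through \Cref{lem:easy_HYPM}, i.e.\ by checking \eqref{eq:easy1} and \eqref{eq:easy2}.

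The key input is the representation \eqref{eq:geom}: for $\lambda_1\le\cdots\le\lambda_d$ in $I_{\Geom}$, the increments $\Delta\Geom(\lambda_1),\ldots,\Delta\Geom(\lambda_d)$ are mutually independent, with $\Delta\Geom(\lambda_1)$ distributed as $\Geom(\lambda_1)$ and, for $j\ge 2$, $\Delta\Geom(\lambda_j)\eqd B_j\,(1+G_j)$, where $B_j\sim\Ber\big(\frac{\lambda_j-\lambda_{j-1}}{1+\lambda_j}\big)$ and $G_j$, distributed as $\Geom(\lambda_j)$, are independent. Since geometric variables have all moments finite and depending continuously on the parameter, for each fixed integer $k\ge 1$ there is a constant $M_k=M_k(a,b)$ with, for all $\lambda_{j-1}\le\lambda_j$ in $[a,b]\subset(1,+\infty)$,
\[
\E\big[(\Delta\Geom(\lambda_j))^k\big]=\frac{\lambda_j-\lambda_{j-1}}{1+\lambda_j}\,\E\big[(1+G_j)^k\big]\le M_k\,(\lambda_j-\lambda_{j-1}),
\]
and likewise $\E[\Geom(\lambda_j)^k]\le M_k$ on $[a,b]$.

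For \eqref{eq:easy1}: if $\lambda_1=\lambda_2$ or $\lambda_2=\lambda_3$ the left-hand side vanishes; otherwise $\Delta X(\lambda_2)$ and $\Delta X(\lambda_3)$ are independent, so
\[
\E\big[(\Delta X(\lambda_2))^2(\Delta X(\lambda_3))^2\big]=\E\big[(\Delta X(\lambda_2))^2\big]\,\E\big[(\Delta X(\lambda_3))^2\big]\le M_2^2\,(\lambda_2-\lambda_1)(\lambda_3-\lambda_2)\le M_2^2\,(\lambda_3-\lambda_1)^2,
\]
which, using $\lambda_3-\lambda_1\le b-a$, is at most $M_2^2(b-a)^{2-2\kappa}(\lambda_3-\lambda_1)^{2\kappa}$ for any $\kappa\in(\nicefrac12,1)$. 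For \eqref{eq:easy2}: if $\lambda_2=\lambda_3$ the left-hand side vanishes; otherwise write $X(\lambda_3)=X(\lambda_2)+\Delta X(\lambda_3)$ with $X(\lambda_2)$ independent of $\Delta X(\lambda_3)$, expand the cube, and take expectations:
\[
\E\big[\Delta X(\lambda_3)X(\lambda_3)^3\big]=\E[\Delta X(\lambda_3)]\,\E[X(\lambda_2)^3]+3\E[(\Delta X(\lambda_3))^2]\,\E[X(\lambda_2)^2]+3\E[(\Delta X(\lambda_3))^3]\,\E[X(\lambda_2)]+\E[(\Delta X(\lambda_3))^4].
\]
Bounding each $\E[X(\lambda_2)^j]$ by a constant and each $\E[(\Delta X(\lambda_3))^j]$ by $M_j(\lambda_3-\lambda_2)$ gives $\E[\Delta X(\lambda_3)X(\lambda_3)^3]\le C\,(\lambda_3-\lambda_2)\le C(b-a)^{1-\kappa}(\lambda_3-\lambda_2)^\kappa$. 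Hence \eqref{eq:easy1} and \eqref{eq:easy2} hold with, say, $\kappa=\nicefrac34$, and \Cref{lem:easy_HYPM} gives {\HYPM}.

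There is no serious obstacle here: the only step that requires care is reading off from \eqref{eq:geom} both the mutual independence of the increments of $\Geom$ and their precise laws $B_j(1+G_j)$, since it is exactly this independence that makes the expectations above factorise and reduces the whole verification to the (locally uniform in the parameter) boundedness of moments of geometric random variables.
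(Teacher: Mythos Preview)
Your proof is correct and follows the same overall route as the paper: {\HYPR} is checked directly, and {\HYPM} is verified via \Cref{lem:easy_HYPM} by establishing \eqref{eq:easy1} and \eqref{eq:easy2}.

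The execution differs in a way worth noting. The paper computes the exact values $\Fac{0,2,2}=4\lambda_2\lambda_3(\Delta\lambda_2)(\Delta\lambda_3)$ and $\E[(\Delta X(\lambda_3))X(\lambda_3)^3]=(\Delta\lambda_3)\,Q(\lambda_2,\lambda_3)$ (with $Q$ an explicit polynomial) by differentiating the generating function $\f^{\Geom}_{\lambda\cro{1,3}}$. You instead read off from \eqref{eq:geom} that the increments $\Delta\Geom(\lambda_j)$ are mutually independent with law $B_j(1+G_j)$, and use this to factor the expectations and bound each piece by locally uniform moment bounds for geometric variables. Your route is more structural and avoids explicit formulas; the paper's route gives sharper constants but requires the generating-function machinery. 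Both arrive at bounds of order $(\lambda_3-\lambda_1)^{2}$ and $(\lambda_3-\lambda_2)$ respectively, which is more than enough for any $\kappa\in(\nicefrac12,1)$.
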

  \begin{proof} It is straightforward to check that {\HYPR} holds. We check {\HYPM}  using \Cref{lem:easy_HYPM}: 
  first note that
    $\Fac{0,2,2}{}= 4\lambda_2\lambda_3(\Delta \lambda_2)(\Delta \lambda_3)$, which implies~\eqref{eq:easy1}. 
    For~\eqref{eq:easy2}, we write
    $\E_{\Geom}((\Delta X(\lambda_3))X(\lambda_3)^3)= (\Delta \lambda_3)Q$,
    where
    \[Q= 24\,{\lambda_{{3}}}^{3}+18\, \left( \lambda_{{2}}+2 \right) {\lambda_{
{3}}}^{2}+2\, \left( 6\,{\lambda_{{2}}}^{2}+12\,\lambda_{{2}}+7
 \right) \lambda_{{3}}+6\,{\lambda_{{2}}}^{3}+12\,{\lambda_{{2}}}^{2}+
 7\,\lambda_{{2}}+1\]
is a polynomial which is bounded on any compact $[a,b]\subset I$. These moments are computed by some differentiations of $\f^{\Geom}_{\lambda\cro{1,3}}$.
      \end{proof}
      
      We now comment on the properties of the limiting process. In particular, we look at its 1- and 2-dimensional marginals.
By Equation~\eqref{eq:qdhtsd}, we get that, for all $\lambda>1$, for all $x\in\mathbb R$,
\ben\label{eq:fqdqhtrhs}
\Phi_{\lambda}(x)= \big(1+\lambda- \lambda\Phi_{\lambda} ( x/\lambda)\big)^{-1}
\een
and the only solution with mean 1 is
\[\Phi_{\lambda}(x)= (1-\lambda +\mathrm i x)/(1-\lambda+ \mathrm i x\lambda),\]
which we identify as the Fourier transform of the distribution of the random variable
\be
\Ber(p_\lambda)\, {\sf Expo}(p_\lambda) ~~\textrm{ for }p_\lambda=\frac{\lambda-1}\lambda=1-q_{\lambda}
\ee where ${\sf Expo}(a)$ stands for an exponential random variable with parameter $a$ (and mean $1/a$), independent from the Bernoulli random variable. 

Hence, the probability of extinction is $1/\lambda$, and 
${\cal L}(W_\lambda~|~W_\lambda>0)$ is the law of ${\sf Expo}(p_\lambda)$.

The Fourier transform of the second dimensional distribution is a bit involved:
\begin{equation}\f^{\Delta,{\sf Geom}}_{\lambda_1,\lambda_2}( x_1,x_2)={\frac {1+\lambda_1-\lambda_1x_2}{ \left( 1+\lambda_2-\lambda_2x_2 \right)  \left(1+\lambda_1- \lambda_1x_1 \right) }}.
\end{equation}
A solution can be found:
\begin{equation}\label{eq:ggez}\f^{\Delta,{\sf Geom}}_{\lambda_1,\lambda_2}( x_1,x_2) = a+b\Phi_{\lambda_1}(x_1) +c\Phi_{\lambda_2}(x_2)+d\Phi_{\lambda_1}(x_1)\Phi_{\lambda_2}(x_2)
\end{equation}
and this is solution for all $(a,b,c,d)$ such that
\[c=1-d,b=1-d,a=d-1\] and $d$ is a root of the following quadratic polynomial
\begin{multline*}
  -x_1x_2 \left( \lambda_2-1 \right)  \left( \lambda_1-1 \right) {d}^{2}\\
  +{\rm i}d \Big( {\lambda_1}^{2}\lambda_2x_2+\lambda_1{\lambda_2}^{2}x_1-{\lambda_1}^{2}x_2-\lambda_1\lambda_2x_1-{\rm i}{\lambda_2}^{2}-\lambda_2x_1-\lambda_2x_2+2\,{\rm i}\lambda_2+2\,{\rm i}\lambda_1x_1x_2+{\rm i}\lambda_2x_1x_2\\
    +x_1+x_2-2\,{\rm i}\lambda_1\lambda_2+{\rm i}\lambda_1-3\,{\rm i}\lambda_1\lambda_2x_1x_2+{\rm i}\lambda_1{\lambda_2}^{2}-{\rm i} \Big)\\
  -{\rm i}{\lambda_1}^{2}\lambda_2x_2-{\rm i}\lambda_1{\lambda_2}^{2}x_1-2\,\lambda_1\lambda_2x_1x_2+{\rm i}\lambda_1x_1+{\rm i}\lambda_2x_2+{\lambda_1}^{2}\lambda_2+\lambda_1{\lambda_2}^{2}-{\lambda_1}^{2}-2\,\lambda_1\lambda_2-{\lambda_2}^{2}+\lambda_1+\lambda_2
\end{multline*}
This  is not really informative, and we hope that some reader will succeed in finding a more classical representation of this distribution\footnote{To get this formula, we guess the form \eref{eq:ggez}, so that finding $(a,b,c,d)$ such that $\Phi_{\lambda_1,\lambda_2} \left( x_1,x_2 \right) =\f^{\Delta,{\sf Poi}}_{\lambda_1,\lambda_2}\l[\Phi_{\lambda_1,\lambda_2} \left( \frac{x_1}{\lambda_1},\frac{x_2}{\lambda_2} \right),\Phi_{\lambda_1,\lambda_2}\left( 0,\frac{x_2}{\lambda_2} \right)\r]$, and $\Phi_{\lambda_1,\lambda_2} \left( x_1,x_2 \right)$ has the right marginal, is a matter to solve some polynomial equations in $(a,b,c,d)$ and coefficients in the set of rational fractions $\Q[[x_1,x_2,\lambda_1,\lambda_2]]$. This can be done by hand or using a computer algebra system, and the computation of a Kronur basis.}.

\subsection{The Poisson coupling}
Take a standard Poisson process $\Poi:=(\Poi(\lambda), \lambda\in I_{\Poi})$ with intensity~1. 
We have
\[\E\Poi(\lambda)=\lambda, \textrm{ for } \lambda\in I_{\Poi}:=(1,+\infty).\]
For any $\lambda_1\leq \cdots \leq \lambda_m$ elements of $I_{\Poi}$,
\be
\f^{\Poi}_{\lambda_1,\cdots,\lambda_m}(x_1,\cdots,x_m)
&=& \prod_{k=1}^m \exp\Bigg((\lambda_k-\lambda_{k-1})\bigg(-1+\prod_{j=k}^m x_j\bigg)\Bigg), 
\ee
since $(\Poi(\lambda_k)-\Poi(\lambda_{k-1}), 1\leq k \leq m)$ are 
independent Poisson random variables with respective parameters $\lambda_k-\lambda_{k-1}$.
\begin{prop}The process $\Poi$ satisfies {\HYPR} and  {\HYPM}, so that \Cref{thm:Geninf} applies when the offspring process $X=\Poi$, on $I_{\Poi}=(1,+\infty)$. 
  \end{prop}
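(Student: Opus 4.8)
The plan is to follow the same two-step route as for the \Bin\ and \Geom\ couplings: check {\HYPR} by inspection, then obtain {\HYPM} from the more convenient conditions \eqref{eq:easy1} and \eqref{eq:easy2} provided by \Cref{lem:easy_HYPM}.

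\emph{Step 1: {\HYPR}.} A standard Poisson process of intensity~$1$ is almost surely c\`adl\`ag and non-decreasing, takes its values in $\mathbb N$, and satisfies $\E\Poi(\lambda)=\lambda$ for every $\lambda$; restricted to $I_{\Poi}=(1,+\infty)$ this is precisely what {\HYPR} asks for.

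\emph{Step 2: {\HYPM}.} Fix $[a,b]\subset I_{\Poi}$ and $a\leq\lambda_1\leq\lambda_2\leq\lambda_3\leq b$. The one structural fact we use is the independence and stationarity of the increments of $\Poi$: the variables $\Delta X(\lambda_2)$ and $\Delta X(\lambda_3)$ are independent with laws $\Poi(\lambda_2-\lambda_1)$ and $\Poi(\lambda_3-\lambda_2)$, and $X(\lambda_3)=\Poi(\lambda_2)+\Delta X(\lambda_3)$ is a sum of two independent Poisson variables. Plugging the classical moment formulas for a $\Poi(\mu)$ variable ($\E Y=\mu$, $\E Y^2=\mu+\mu^2$, $\E Y^3=\mu+3\mu^2+\mu^3$, $\E Y^4=\mu+7\mu^2+6\mu^3+\mu^4$) turns the left-hand sides of \eqref{eq:easy1} and \eqref{eq:easy2} into explicit polynomials in $\lambda_1,\lambda_2,\lambda_3$. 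For \eqref{eq:easy1},
\[
\E\big[(\Delta X(\lambda_2))^2(\Delta X(\lambda_3))^2\big]=\big[(\lambda_2-\lambda_1)+(\lambda_2-\lambda_1)^2\big]\big[(\lambda_3-\lambda_2)+(\lambda_3-\lambda_2)^2\big],
\]
which carries the factor $(\lambda_2-\lambda_1)(\lambda_3-\lambda_2)\leq\tfrac14(\lambda_3-\lambda_1)^2$ (AM--GM). For \eqref{eq:easy2}, expanding $\Delta X(\lambda_3)\,X(\lambda_3)^3=\Delta X(\lambda_3)\big(\Poi(\lambda_2)+\Delta X(\lambda_3)\big)^3$ and taking expectations, every resulting monomial contains at least one power of $\Delta X(\lambda_3)$, hence at least one factor $\lambda_3-\lambda_2$, so $\E[\Delta X(\lambda_3)X(\lambda_3)^3]\leq Q\,(\lambda_3-\lambda_2)$ for a constant $Q$ depending only on $[a,b]$.

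To finish, note $\lambda_3-\lambda_1\leq b-a$ and $\lambda_3-\lambda_2\leq b-a$, so the leftover polynomial factors are bounded on $[a,b]$; combining this with the elementary bounds $x\leq(b-a)^{2-2\kappa}x^{2\kappa}$ and $x\leq(b-a)^{1-\kappa}x^{\kappa}$ (valid for $0\leq x\leq b-a$, using $\kappa<1$) yields \eqref{eq:easy1} and \eqref{eq:easy2} for any $\kappa\in(\nicefrac12,1)$. I do not expect a genuine obstacle here: the independent stationary increments make all moments explicit and the required decay is present with a comfortable margin (the exponents $2\kappa<2$ and $\kappa<1$ are not tight); the only slightly fiddly point is the factorial-to-ordinary moment bookkeeping in the expansion of $\E[\Delta X(\lambda_3)X(\lambda_3)^3]$, just as in the \Geom\ example.
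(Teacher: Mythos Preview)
Your proposal is correct and follows essentially the same route as the paper: check {\HYPR} directly, then verify {\HYPM} via \Cref{lem:easy_HYPM} by computing the two moments \eqref{eq:easy1}--\eqref{eq:easy2} using independence of the Poisson increments (the paper writes the second one out explicitly as $(\lambda_3^3+6\lambda_3^2+7\lambda_3+1)\Delta\lambda_3$, while you argue structurally that a factor $\Delta\lambda_3$ must appear). One small slip: the inequality $x\leq(b-a)^{2-2\kappa}x^{2\kappa}$ is false for $\kappa>\tfrac12$; what you need (and clearly intend after the AM--GM step) is $x^2\leq(b-a)^{2-2\kappa}x^{2\kappa}$ for $0\leq x\leq b-a$.
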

  \begin{proof} Again, it is straightforward to check that {\HYPR} holds. 
  We now check {\HYPM}  using \Cref{lem:easy_HYPM}: we have
    $\Fac{0,2,2}{}= (\Delta \lambda_2)^2(\Delta \lambda_3)^2$, which implies~\eqref{eq:easy1}. 
    For~\eqref{eq:easy2}, we write
    $   \E_{\Poi}((\Delta X(\lambda_3))X(\lambda_3)^3)= ( {\lambda_{{3}}}^{3}+6\,{\lambda_{{3}}}^{2}+7\,\lambda_{{3}}+1) \Delta \lambda_3$. Since the polynomial in front of $\Delta \lambda_3$ is bounded on any compact $[a,b]\subset I$, this implies~\eqref{eq:easy2}.
These moments can be computed by some differentiations of $\f^{\Poi}_{\lambda\cro{1,3}}$.
      \end{proof}
The extinction probability is solution of $e^{\lambda(q_\lambda-1)}=q_\lambda$, which implies that $q_\lambda$ can be expressed in terms of $\lambda$ using the LambertW function, or reciprocally, given $q=q_\lambda\in(0,1)$, the corresponding $\lambda$ is  $\lambda = \frac{\ln(q)}{q-1}$.
The Fourier transform $\Phi_\lambda$ of $W(\lambda)$ satisfies
$\Phi_\lambda(x)=\exp\l[\lambda(\Phi_\lambda(x/\lambda)-1)\r]$.\par 
Let us turn our attention toward ${\cal L}(W_\lambda~|~W_\lambda>0)$; using \eref{eq:dqsfe} and plugging $\P(X_\lambda=m)=\lambda^m e^{-\lambda}/m!$, we get $p_0(\lambda)=0$ and for $j\geq 1$
\be
p_j(\lambda)&=& \frac{(1-q_{\lambda})^{j-1}}{j!}\lambda^je^{-\lambda}\sum_{m\geq j} \frac{\lambda^{m-j}q_\lambda^{m-j} }{(m-j)!} 
= \frac{\lambda [\lambda(1-q_{\lambda})]^{j-1}}{j!}e^{\lambda(q-1)}.
\ee
Hence $\Psi_\lambda$ is solution to
\[\Psi_\lambda(x)= \frac{-1+q^{\Psi_\lambda(x/\lambda)}}{(-1+q)q^{\Psi_\lambda(x/\lambda)-1}}= \frac{q(1-q^{-\Psi_\lambda(x/\lambda)})}{-1+q}.\]
From here (or using \eref{eq:qdsqrzyu}) it is possible to extract the moments of $\Psi_\lambda$:
\[1,\frac{1}{1-q},
  {\frac {\lambda}{(1-q) 
      \left( \lambda-1 \right) }},
  {\frac {{\lambda}^{2} \left( \lambda+2 \right) }{ \left( \lambda+1 \right)  \left( \lambda-1 \right) ^{2}
 (1-q) }},
{\frac {{\lambda}^{3} \left( {\lambda}^{3}+5\,{\lambda}^{2}+6\,\lambda+6 \right) }{ \left( \lambda+1 \right) 
 \left( \lambda-1 \right) ^{3} \left( {\lambda}^{2}+\lambda+1 \right) 
 (1-q) }}, \cdots\]
We were not able to go further in the description of this distribution.

\begin{OQ} Find a simple description of each of the limit processes $(W(\lambda), \lambda\in I)$ in the binary, geometric, and Poisson cases. 
\end{OQ}
Other cases can be interesting too, but we can expect that these three ones are likely to be the simplest, since they are the simplest model of GW trees.

\section{Proof of Theorem~\ref{thm:Geninf}}
\label{sec:PTTG}

\subsection{Convergence in $D(I,\R^+)$:  tightness under moments assumptions}
\label{sec:tight}

We first give a characterization of convergence in $D(I, \R)$ taken in Billingsley~\cite[Section 13.5]{billin}:
\begin{prop}\label{pro:bill} 
Consider a compact interval $[a,b]\subset \R$, and $(Y_n(\lambda), n\geq 0)_{\lambda\in [a,b]}$ a sequence of processes such that:
  \bir
  \itr
  $(Y_n(\lambda_i), 1\leq i \leq d)\dd (Y(\lambda_i), 1\leq i \leq d)$ for all $d\geq 1$ and $\lambda_1\leq \lambda_2\leq \cdots \leq \lambda_d$ in the set of continuity points of $Y$ on $[a,b]$.
  \itr  $Y(b-\delta)\sur{\to}{(d)} Y(b)$ as $\delta\to 0$, and
  \itr For all $a\leq \lambda_1\leq \lambda_2\leq \lambda_3\leq b$ and $\eta>0$,
  \ben\label{eq:dqgfdiliq}
  \P\big(\min\big\{|\Delta Y_n(\lambda_i)| , i \in\{2,3\} \big\}\geq \eta\big)
  \leq\frac{ (F(\lambda_3)-F(\lambda_1))^{2\alpha}}{\eta^{4\beta}}\een where $\beta\geq 0$, $\alpha > 1/2$, and $F$ is non-decreasing and continuous on $[a,b]$.\\
  \eir
 In this case, $Y_n\dd Y$ in $D([a,b],\R)$.
\end{prop}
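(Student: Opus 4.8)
\emph{Step 1 (reduction and tightness).} This is Billingsley's criterion~\cite[Section~13.5]{billin} transported from $D[0,1]$ to $D([a,b],\R)$; since the affine map carrying $[a,b]$ onto $[0,1]$ is a homeomorphism of the corresponding Skorokhod spaces and transports laws and finite-dimensional convergence, I may assume $[a,b]=[0,1]$, and the plan is to feed hypotheses (i)--(iii) into that machinery. The first task is to show that $(Y_n)_{n\geq0}$ is tight in $D([0,1],\R)$. By Billingsley's characterisation this amounts to $\lim_{K\to\infty}\limsup_n\P(\|Y_n\|_\infty\geq K)=0$ together with $\lim_{\delta\downarrow0}\limsup_n\P(w'(Y_n,\delta)\geq\eta)=0$ for every $\eta>0$, where $w'$ is the c\`adl\`ag modulus of continuity. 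For the modulus condition I would invoke the probabilistic form of Billingsley's two-increment criterion: if
\[
\P\big(|Y_n(t)-Y_n(t_1)|\geq\eta,\ |Y_n(t_2)-Y_n(t)|\geq\eta\big)\leq\eta^{-4\beta}\,(F(t_2)-F(t_1))^{2\alpha},\qquad t_1\leq t\leq t_2,
\]
for all $n$, with $\beta\geq0$, $\alpha>1/2$ and $F$ continuous non-decreasing, then the modulus is controlled; this is exactly hypothesis~(iii), read with $(t_1,t,t_2)=(\lambda_1,\lambda_2,\lambda_3)$ and the two increments $\Delta Y_n(\lambda_2)$, $\Delta Y_n(\lambda_3)$. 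For the sup-norm tightness I would use hypothesis~(i): at any finite set $\lambda_1\leq\cdots\leq\lambda_d$ of continuity points of $Y$, the joint law of $(Y_n(\lambda_1),\dots,Y_n(\lambda_d))$ converges (to that of $(Y(\lambda_1),\dots,Y(\lambda_d))$), hence is tight in $\R^d$; combined with the modulus control, a standard chaining over $O(1/\delta)$ blocks bounds $\sup_{[0,1)}|Y_n|$, while hypothesis~(ii) rules out a jump of $Y_n$ escaping to the right endpoint (the two-increment bound alone would permit a single large jump of $Y_n$ to drift toward $1$; (ii) forbids the finite-dimensional limit, hence --- via (i) at points just below $1$ --- also the pre-limit, from doing so). This gives tightness.

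\emph{Step 2 (identification of the limit).} By Prokhorov's theorem, every subsequence of $(Y_n)$ has a further subsequence converging in distribution in $D([0,1],\R)$ to some process $\tilde Y$, and it suffices to show $\tilde Y\eqd Y$ for each such $\tilde Y$. For every $\lambda$ that is at once a continuity point of $Y$ and an almost-sure continuity point of $\tilde Y$ --- all but countably many $\lambda\in[0,1]$, since both processes are c\`adl\`ag --- the projection $\pi_\lambda$ is $\tilde Y$-a.s.\ continuous, so $Y_n(\lambda)\Rightarrow\tilde Y(\lambda)$ along the subsequence, while $Y_n(\lambda)\Rightarrow Y(\lambda)$ by (i); more generally, the finite-dimensional distributions of $Y$ and of $\tilde Y$ coincide on this co-countable, dense set $S\subseteq[0,1]$. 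By right-continuity, finite-dimensional distributions along $S$ determine the law on $D([0,1],\R)$ once one also controls the single point $1$; hypothesis~(ii) supplies exactly this, forcing the laws of $Y$ and $\tilde Y$ to agree at $1$ too (the analogue of requiring that $1$ lie in the index set of finite-dimensional convergence in Billingsley~\cite[Section~13]{billin}). Hence $\tilde Y\eqd Y$, every subsequential limit equals $Y$, and $Y_n\Rightarrow Y$ in $D([0,1],\R)$; transported back, this is the claim.

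\emph{Main obstacle.} Controlling the modulus is a routine application of the two-increment bound (iii); the genuine difficulty is the endpoint bookkeeping. The Skorokhod topology is sensitive to the behaviour of paths at the right end of the interval, finite-dimensional convergence is granted only at continuity points of $Y$ (possibly excluding $b$), and one must verify that (ii) supplies exactly the missing information --- both to rule out an escaping jump in the tightness step and to pin down the limit at $b$. Making those two uses of (ii) precise, rather than merely citing Billingsley, is where the care lies.
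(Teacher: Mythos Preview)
The paper does not prove this proposition at all: it is introduced with the phrase ``We first give a characterization of convergence in $D(I,\R)$ taken in Billingsley~\cite[Section~13.5]{billin}'' and is treated as a known result, with no argument supplied beyond the citation. Your proof plan is a correct reconstruction of the standard Billingsley machinery behind that citation --- tightness from the two-increment bound~(iii), identification of subsequential limits via finite-dimensional convergence on a co-countable dense set from~(i), and the endpoint control at~$b$ from~(ii) --- so you are filling in exactly the details the paper defers to the reference, and your approach coincides with the (implicit) one.
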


\begin{rem} \label{rem:ffqdqs}
By~\cite[Eq.\ (13.14)]{billin}, a sufficient condition for the process
$Y_n(\lambda)=W_n(\lambda)$ (for all $n\geq 0$, $\lambda\in [a,b]$) 
to satisfy \eref{eq:dqgfdiliq} is 
\be
\E\big(|\Delta W_n(\lambda_2)|^2 |\Delta W_n(\lambda_3)|^2\big)
&\leq& {\sf Const}\cdot (\lambda_3-\lambda_1)^{2\alpha}.
\ee
\end{rem}
In fact, we prove that the limit $W$ is in $D(I,\R^+)$ 
as a weak limit of elements of $D(I,\R^+)$ 
for the Skorokhod topology on each compact.

To prove \Cref{thm:Geninf}, we start by a lemma that shows that, in Proposition~\ref{pro:bill}, if the convergence in Assumption (i) holds almost surely, then $Y_n$ converges in probability in $D(I)$ to $Y$.
\begin{lem} \label{lem:grfdq}\label{sec:proba} Assume that a sequence of processes $(T_n,n\geq 0)$ is tight in $D([a,b])$, and that moreover, for any $d\geq 0$ any $a\leq \lambda_1,\cdots,\lambda_d\leq b$,
  the sequence $(T_n(\lambda_1),\cdots,T_n(\lambda_d))$ converges a.s toward some random variables $(T(\lambda_1),\cdots,T(\lambda_d))$ (that are, by construction, consistent). Under these assumptions, $(T_n)$ converges in probability  in $D([a,b])$ (equipped with the Skorokhod topology) to a c\`adl\`ag process $T'$ which coincides with $T$ almost everywhere 
(it is determined by $T$, but can be different on a countable number of points).
\end{lem}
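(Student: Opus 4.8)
The plan is to establish, in three movements, that (i) $(T_n)$ converges weakly in $D([a,b])$ to a law $\mu$, (ii) there is a c\`adl\`ag process $T'$ \emph{on the original probability space} whose law is $\mu$ and which agrees with the pointwise limit $T$ off a countable set, and (iii) the convergence $T_n\to T'$ in fact holds in probability. For (i): by Prokhorov's theorem tightness makes $(T_n)$ relatively compact in $D([a,b])$; since each $(T_n(\lambda_1),\dots,T_n(\lambda_d))$ converges a.s.\ it converges in distribution, and any subsequential weak limit, evaluated at a point $\lambda$ of its (co-countable, dense) continuity set and at $\lambda=b$ (where the evaluation map is always $J_1$-continuous), must have the law of the a.s.\ limit $(T(\lambda_1),\dots,T(\lambda_d))$. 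As finite-dimensional projections over a dense, co-countable set containing $b$ form a separating class on $D([a,b])$ (Billingsley~\cite[Section~13]{billin}), all subsequential limits coincide, so $T_n\dd\mu$ for a single $\mu$.

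For (ii), fix a countable dense set $D\ni b$ contained in the continuity set of $\mu$, and restrict to the probability-one event $\Omega_0$ on which $T_n(\lambda)\to T(\lambda)$ for every $\lambda\in D$ simultaneously. On a finite grid $G\subset D$ the discretised Billingsley modulus $w''_{T_n}(G,\delta)$ converges on $\Omega_0$ to $w''_{T}(G,\delta)$ and is dominated by $w''_{T_n}(\delta)$; feeding the tightness estimate $\lim_{\delta\to0}\limsup_n\P(w''_{T_n}(\delta)\geq\eta)=0$ through a portmanteau-type inequality, letting $G\uparrow D$, and applying Borel--Cantelli along $\eta=\delta=2^{-j}$, one gets that a.s.\ the restriction $T|_D$ has vanishing $w''$-modulus, hence extends uniquely to a c\`adl\`ag $T'$ on $[a,b]$ with $T'(\lambda)=\lim_{s\downarrow\lambda,\,s\in D}T(s)$. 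Matching one-dimensional laws shows $T'$ has law $\mu$, and then, because each $\lambda\in D$ is a continuity point of $\mu$ and $\lambda\in D$, the $w''$-control rules out a ``spike'' of $T|_D$ at $\lambda$ and forces $T(\lambda)=T'(\lambda)$ a.s.; thus $T'(\lambda)=T(\lambda)$ a.s.\ for every $\lambda$ outside the countable discontinuity set of $\mu$, which is the asserted coincidence of $T'$ with $T$ almost everywhere (and $T'$ is determined by $T$ as its c\`adl\`ag version).

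For (iii), suppose $(T_n)$ does not converge in probability to $T'$: there are $\varepsilon>0$ and a subsequence with $\P(d_{J_1}(T_n,T')\geq\varepsilon)\geq\varepsilon$. The pairs $(T_n,T')$ are tight in $D([a,b])^2$, so along a further subsequence $(T_{n_k},T')\dd(U,V)$, where $U$ has law $\mu$ (by (i)) and $V$ has the law of $T'$. At each $\lambda\in D$ we have $T_{n_k}(\lambda)\to T(\lambda)=T'(\lambda)$ a.s., so $(T_{n_k}(\lambda),T'(\lambda))\to(T(\lambda),T(\lambda))$ a.s.; hence the joint finite-dimensional laws at the dense set $D$ of continuity points converge to the diagonal, giving $U(\lambda)=V(\lambda)$ a.s.\ simultaneously for all $\lambda\in D$, and two c\`adl\`ag functions agreeing on a dense set are equal, so $U=V$ a.s. Since $\{(x,y):d_{J_1}(x,y)\geq\varepsilon\}$ is closed in $D([a,b])^2$, the portmanteau theorem yields $\limsup_k\P(d_{J_1}(T_{n_k},T')\geq\varepsilon)\leq\P(d_{J_1}(U,V)\geq\varepsilon)=0$, a contradiction. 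Hence $T_n\proba T'$ in $D([a,b])$, which also upgrades the weak convergence of step (i).

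The non-routine part is step (ii): converting the ``in probability'' oscillation control coming from tightness into an almost-sure pathwise regularity statement for the limit $T$, so that $T$ genuinely admits a c\`adl\`ag modification, and reconciling the pointwise a.s.\ limits $T(\lambda)$ (which exist at \emph{every} time, not just continuity points) with its right-continuous modification $T'$. Once $T'$ is in hand, step (iii) is a soft subsequence-and-portmanteau argument, and step (i) is essentially Billingsley's criterion.
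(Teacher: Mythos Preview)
Your proof is correct and takes a genuinely different route from the paper's. The paper argues by a direct, bare-hands estimate of the Skorokhod distance: it fixes a fine grid of a.s.\ continuity points $\{\widehat x_i\}$ at which $T_n\to T$, uses the tightness modulus $w'(\delta,\cdot)$ to localise the possible large jumps of $T_n$ and of $T$ between consecutive grid points (at most one of each per interval), and then explicitly builds a piecewise-linear time change $\varpi$ that synchronises these jumps, obtaining $d(T_n,T')\le 11\varepsilon$ with probability at least $1-2\varepsilon$. Your argument is soft: weak convergence via Prokhorov, then a subsequence--portmanteau argument on pairs $(T_{n_k},T')$ in $D([a,b])^2$, exploiting that the diagonal is closed and that finite-dimensional projections on a dense continuity set containing $b$ separate laws in $D$. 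The paper's approach is self-contained and yields explicit constants; yours is shorter and transfers verbatim to other Polish path spaces.

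One remark on economy: step~(ii), which you rightly flag as the non-routine part, can in fact be bypassed. Run the portmanteau argument of step~(iii) on pairs $(T_{m_k},T_{n_k})$ for two arbitrary subsequences to show directly that $(T_n)$ is Cauchy in probability in $D([a,b])$; completeness of $D([a,b])$ then furnishes the limit $T'$ without any modulus-of-continuity or Borel--Cantelli work. The identification of $T'$ with $T$ can be done \emph{after} step~(iii): at each $\lambda$ that is a.s.\ a continuity point of $T'$, convergence $T_n\to T'$ in probability in $D$ forces $T_n(\lambda)\to T'(\lambda)$ in probability, while $T_n(\lambda)\to T(\lambda)$ a.s., whence $T(\lambda)=T'(\lambda)$ a.s. As written, your step~(ii) only actually establishes $T(\lambda)=T'(\lambda)$ for $\lambda\in D$ (the $w''$-modulus is computed on $D$, so the ``spike'' argument does not directly reach $\lambda\notin D$); but since step~(iii) only uses equality on $D$, this is harmless, and the full almost-everywhere coincidence follows afterwards as above.
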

We prove this lemma in \Cref{sec:proof_cvproba}. \par
In \Cref{prp:qsdqs} we stated the a.s. convergence of $(W_n(\lambda_i),1\leq i \leq d)$ for all fixed $\lambda_1<\cdots <\lambda_d \in I$, and then characterised the limiting distribution $(W(\lambda_i),1\leq i \leq d)$. 
By Theorem~\ref{thm:Geninf}, the limiting process in probability of $(W_n)$, which we call $W^*$ for the purpose of this remark, is a c\`adl\`ag process (as the limit of c\`adl\`ag processes in $D(I)$). From this convergence it can only be deduced that the FDD of $W^\star$ are given by those of $(W(\lambda_i),1\leq i \leq d)$ almost everywhere, in fact at the a.s. continuity point of $W^*$ (the complement is a Lebesgue null subset of $I$).

To prove \Cref{thm:Geninf}, it only remains to check that Assumptions $(i)$, $(ii)$
and $(iii)$ of \Cref{pro:bill} are satisfied under the assumptions of~\Cref{thm:Geninf}.
In \Cref{pro:qdsqhsd}, we have already proved that Assumption $(i)$ of \Cref{pro:bill} holds. We prove $(ii)$ in~\Cref{sec:fqfd}, and $(iii)$ in \Cref{sec:iii}. 
In the whole section, we assume that the assumptions of Theorem~\ref{thm:Geninf} hold.

\subsection{The limiting process satisfies Condition $(ii)$ of \Cref{pro:bill}} 
\label{sec:fqfd}
Recall that, by \Cref{pro:qdsqhsd},
\ben\label{eq:tthjy}
\Phi_{\lambda_1,\lambda_2}(x_1,x_2) = \f^{\Delta}_{\lambda_1,\lambda_2}\l[ \Phi_{\lambda_1,\lambda_2}\l(\frac{x_1}{\lambda_1},\frac{x_2}{\lambda_2}\r),\Phi_{\lambda_1,\lambda_2}\l( 0,\frac{x_2}{\lambda_2}\r)\r]
\een
and $\Phi_{\lambda_1, \lambda_2}$ is the unique solution of this equation with mean $(1,1)$ and finite variance.
Differentiating this equation several times in both variables gives equations for the moments of $(W(\lambda_1), W(\lambda_2))$. This can be extended to general $d$ dimensional moments (see also \Cref{sec:ECM}).  
For all integers $k_1, k_2\geq 0$, set
\be
M_{k_1,k_2}:=\E\Big(W(\lambda_1)^{k_1} W(\lambda_2)^{k_2}\Big), 
\ee 
and recall that, by assumption, $M_{0,0}=M_{1,0}=M_{0,1}=1$.
We first aim at proving that
\ben\label{eq:aiamg}
\E((W(\lambda_2)-W(\lambda_1))^2)
=M_{2,0}+M_{0,2}-2M_{1,1}\sous{\longrightarrow}{\lambda_1\to \lambda_2}0,
\een
since it would imply Assumption $(ii)$ of Proposition~\ref{pro:bill}. 
Differentiating~\eref{eq:tthjy}, we get that
\ben\label{eq:rsgf}
\bpar{ccl}M_{2,0}&=& (\Fac{2,0} +\lambda_1 M_{2,0} )/\lambda_1^2\\
M_{0,2}&=&(\lambda_2M_{0, 2}+\Fac{0, 2} +2\Fac{1, 1} +\Fac{2,0})/\lambda_2^2\\
M_{1,1}&=&(\lambda_1M_{1, 1}+\Fac{1, 1} +\Fac{2, 0} )/(\lambda_1\lambda_2),
\epar\een
where we have for all integers $\beta_1, \ldots, \beta_d\geq 0$,
\ben\label{eq:dqdhhuds}
\Fac{\beta\cro{1,d}}(\lambda\cro{1,d})
:=\frac{\partial^{|\beta|}}{\partial y_1^{\beta_1}\cdots \partial y_d^{\beta_d}}\f^{\Delta}(y_1,\cdots,y_d)\Big|_{[1,\cdots,1]}
\een
From~\eqref{eq:rsgf}, we get
\ben\label{eq:dqdsdq2}
M_{2,0} ={\frac {\Fac{2,0}}{\lambda_1 \left( \lambda_1 -1\right) }},~~ 
M_{0,2}= \frac {\Fac{0,2}+2\,\Fac{1,1}+\Fac{2,0}}{\lambda_2 \left( \lambda_2-1 \right) },~~ 
M_{1,1}=\frac {\Fac{1,1}+\Fac{2,0}}{\lambda_1 \left( \lambda_2-1 \right)}.
\een
Thus, by~\eqref{eq:aiamg}, we have, as $\lambda_2\to\lambda_1$,
\[\E\l((W(\lambda_2)-W(\lambda_1))^2\r)
\sim \frac{\Fac{0,2}}{\lambda_1(\lambda_1-1)}.
\]
Thus, for the right-hand side of~\eref{eq:aiamg} to tend to zero as $\lambda_2\to\lambda_1$, it is necessary and sufficient that ${\sf Fac}^{\Delta}_{{0,2}} \to 0$, which is implied by~{\HYPM}, Equation~\eqref{eq:dsdity}.

\subsection{The sequence $(W_n)$ satisfies Condition $(iii)$ of \Cref{pro:bill}}
\label{sec:iii}
{The last remaining step to prove the tightness is the following result.}
\begin{thm}\label{th:tightness} 
  If {\HYPR} and {\HYPM} hold, 
 then  for all $[a,b]\subseteq I$,
there exists a constant $C_W>0$ such that, for all $\lambda_1\leq \lambda_2 \leq \lambda_3\in [a,b]$,
\be\label{eq:fqdqksd}\sup_{n\geq 0} \mathbb E\l[\l(\Delta W_n(\lambda_2)\r)^2\l(\Delta W_n(\lambda_3)\r)^2\r]\leq C_W(\lambda_3-\lambda_1)^{2\kappa}.
\ee
\end{thm}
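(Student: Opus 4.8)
I would prove \eqref{eq:fqdqksd} by induction on $n$, turning the first-generation branching recursion into a contraction for $a_n := \sup\{(\lambda_3-\lambda_1)^{-2\kappa}\,\mathbb{E}[(\Delta W_n(\lambda_2))^2(\Delta W_n(\lambda_3))^2] : a\le\lambda_1\le\lambda_2\le\lambda_3\le b\}$. First, though, I would record a few uniform-in-$n$ bounds on compacts, proved the same way but one order lower: $\sup_n\mathbb{E}[W_n(\lambda)^2]<\infty$, $\sup_n\mathbb{E}[(\Delta W_n(\lambda_2))^2]\le M(\lambda_2-\lambda_1)^\kappa$ (using \eqref{eq:dsdity}), and the boundedness of the few ``mixed'' second-type moments, e.g.\ $\mathbb{E}[W_n(\lambda_1)^2W_n(\lambda_2)^2]$ and $\mathbb{E}[W_n(\lambda_1)W_n(\lambda_2)^2W_n(\lambda_3)]$, that occur in the main step. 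The crucial point is that these mixed moments form a system that closes under the first-generation recursion with a contraction factor $<1$, while the only moments of $X$ that ever enter are those controlled by \HYPM{} --- namely $\mathbb{E}[X(\lambda)^2]$ and the $\Fac{a,b,c}$ with $b\ge 1$ or $c\ge 1$ --- and never the pure fourth moment $\mathbb{E}[X(\lambda)^4]$, which \HYPM{} does not control and which may be infinite. This is precisely why \HYPM{} is stated as a family of inequalities, and is where the linearisation of \Cref{sec:ECM} does the bookkeeping.

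\textbf{First-generation decomposition of the increments.} Fix $a\le\lambda_1\le\lambda_2\le\lambda_3\le b$ and put $\Delta\lambda_i=\lambda_i-\lambda_{i-1}$. Splitting the offspring of the root at time $\lambda_i$ into those already present at time $\lambda_{i-1}$ and those born in $(\lambda_{i-1},\lambda_i]$, one gets, for $i\in\{2,3\}$,
\[\lambda_i\,\Delta W_{n+1}(\lambda_i) = \mathcal{C}_i + \mathcal{A}_i + \mathcal{B}_i,\]
where $\mathcal{C}_i = \sum_{k=1}^{X(\lambda_{i-1})}\Delta W_n^{(k)}(\lambda_i)$, $\mathcal{A}_i = \sum_{k=X(\lambda_{i-1})+1}^{X(\lambda_i)}W_n^{(k)}(\lambda_i)$, and $\mathcal{B}_i = -\Delta\lambda_i\,W_{n+1}(\lambda_{i-1})$. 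Thus $\mathcal{C}_i$ is, given $X$, a centred sum carrying the (small) increment $\Delta W_n(\lambda_i)$; $\mathcal{A}_i$ is a sum over the $\Delta X(\lambda_i)$ subtrees born in $(\lambda_{i-1},\lambda_i]$, so it carries an increment of $X$ over that interval; and $\mathcal{B}_i$ carries the explicit factor $\Delta\lambda_i$. Moreover $\mathcal{A}_2$ and $\mathcal{A}_3$ involve disjoint sets of children, hence are independent conditionally on $X$.

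\textbf{Expansion: the recursive term and the remainder.} Expanding $\lambda_2^2\lambda_3^2\,\mathbb{E}[(\Delta W_{n+1}(\lambda_2))^2(\Delta W_{n+1}(\lambda_3))^2] = \mathbb{E}[(\mathcal{C}_2+\mathcal{A}_2+\mathcal{B}_2)^2(\mathcal{C}_3+\mathcal{A}_3+\mathcal{B}_3)^2]$, the single recursive contribution is $\mathbb{E}[\mathcal{C}_2^2\mathcal{C}_3^2]$: conditioning on $X$, the pairs $(\Delta W_n^{(k)}(\lambda_2),\Delta W_n^{(k)}(\lambda_3))_{k\ge 1}$ are i.i.d.\ and centred in $k$, so only paired-index configurations survive and
\[\mathbb{E}[\mathcal{C}_2^2\mathcal{C}_3^2] = \lambda_1\,\mathbb{E}[(\Delta W_n(\lambda_2))^2(\Delta W_n(\lambda_3))^2] + O((\lambda_3-\lambda_1)^{2\kappa}),\]
the error made of terms $\mathbb{E}[(\Delta W_n(\lambda_2))^2]\,\mathbb{E}[(\Delta W_n(\lambda_3))^2]$ and $(\mathbb{E}[\Delta W_n(\lambda_2)\Delta W_n(\lambda_3)])^2$ --- bounded by the preliminary bound, Cauchy--Schwarz, and $(\lambda_2-\lambda_1)^\kappa(\lambda_3-\lambda_2)^\kappa\le(\lambda_3-\lambda_1)^{2\kappa}$ --- with coefficients $\mathbb{E}[X(\lambda_1)X(\lambda_2)]$, $\mathbb{E}[X(\lambda_1)(X(\lambda_1)-1)]$ bounded on $[a,b]$. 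Every other term of the expansion carries at least one factor from $\{\mathcal{A}_2,\mathcal{B}_2\}$ or from $\{\mathcal{A}_3,\mathcal{B}_3\}$, and I would bound each of them by $O((\lambda_3-\lambda_1)^{2\kappa})$, uniformly in $n$, by conditioning on $X$ and reducing to $\mathbb{E}_X$ of (a product of factorial moments of increments of $X$)$\times$(a bounded mixed moment of $W_n$ or $\Delta W_n$). The power $2\kappa$ then comes out of: \eqref{eq:dsdity} (power $\kappa$) for a factor supported on a single new interval; \eqref{eq:dsdity2} (power $2\kappa$) whenever two factors sit on the two disjoint intervals $(\lambda_1,\lambda_2]$ and $(\lambda_2,\lambda_3]$, typically in the interactions between $\mathcal{A}_2$ and $\mathcal{A}_3$; and, for terms containing $\mathcal{B}_i$, its prefactor $\Delta\lambda_i$ together with $(\Delta\lambda_i)^2\le(b-a)^{2-2\kappa}(\lambda_3-\lambda_1)^{2\kappa}$ and $\Delta\lambda_i(\lambda_3-\lambda_1)^\kappa\le(b-a)^{1-\kappa}(\lambda_3-\lambda_1)^{2\kappa}$.

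\textbf{Closing the induction; the main obstacle.} Collecting the above,
\[\mathbb{E}[(\Delta W_{n+1}(\lambda_2))^2(\Delta W_{n+1}(\lambda_3))^2] \le \frac{\lambda_1}{\lambda_2^2\lambda_3^2}\,\mathbb{E}[(\Delta W_n(\lambda_2))^2(\Delta W_n(\lambda_3))^2] + C\,(\lambda_3-\lambda_1)^{2\kappa},\]
with $C$ depending only on $[a,b]$. Since $\lambda_1/(\lambda_2^2\lambda_3^2)\le 1/(\lambda_2\lambda_3^2)\le a^{-3}=:\theta<1$ (because $a>1$) and $a_0=0$ (as $W_0\equiv 1$, hence $\Delta W_0\equiv 0$), this yields $a_{n+1}\le\theta a_n+C$, so $a_n\le C/(1-\theta)$ for all $n$, which is \eqref{eq:fqdqksd} with $C_W=C/(1-\theta)$. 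The hard part is the remainder estimate in the previous step: several single factors --- e.g.\ $W_{n+1}(\lambda_2)^2$ inside $\mathcal{B}_3^2$, or $\mathcal{A}_2^2$ --- are, on their own, only as integrable as a fourth moment of $X$, which is unavailable; one must keep the multi-type bookkeeping (a child born in a given interval contributes to $W$ only at later times) so that each offending factor is always multiplied by an increment of $X$ over a \emph{subinterval}, which \HYPM{} renders both finite and small, and must likewise close the preliminary system of mixed moments without ever calling on $\mathbb{E}[X(\lambda)^4]$. Finally, getting the sharp power $2\kappa$ (and not merely $\kappa$) is exactly what forces the use of the two-disjoint-interval estimate \eqref{eq:dsdity2}.
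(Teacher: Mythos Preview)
Your approach is essentially the paper's. Both argue by induction on $n$, decompose $\Delta W_{n+1}(\lambda_i)$ at the first generation, isolate the single recursive term with contraction factor $\lambda_1/(\lambda_2\lambda_3)^2$, and bound every remaining multinomial by $C(\lambda_3-\lambda_1)^{2\kappa}$ using {\HYPM} together with lower-order moment bounds (your ``preliminary bounds'' are exactly the paper's Lemma~\ref{lem:bounds_lower_moms}).

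The only structural difference is in the decomposition of $\Delta W_{n+1}(\lambda_3)$: you use three pieces $\mathcal C_3,\mathcal A_3,\mathcal B_3$, whereas the paper uses five, organized via operators $S_1,S_2,S_3$ that sum over the disjoint index ranges $[1,X(\lambda_1)]$, $(X(\lambda_1),X(\lambda_2)]$, $(X(\lambda_2),X(\lambda_3)]$ (your $\mathcal C_3$ is the paper's $T'_1+T'_3$, your $\mathcal B_3$ is $T'_2+T'_4$). This is not merely cosmetic: your $\mathcal C_3$ sums over $k\le X(\lambda_2)$ while $\mathcal A_2$ sums over $X(\lambda_1)<k\le X(\lambda_2)$, so they overlap and are \emph{not} conditionally independent given $X$; the paper's finer splitting makes every pair of $S_r$-blocks with different $r$ independent given $X$, which is what drives their systematic expansion (Lemma~\ref{lem:Mom}) and the clean classification of the $225$ $T$-moments. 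Your sketch acknowledges this (``multi-type bookkeeping'') but to actually carry it out you would end up refining $\mathcal C_3$ and $\mathcal B_3$ exactly as the paper does. In short: same proof, the paper's $S_r$-partition is the tidy way to execute it.
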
 
{By \Cref{rem:ffqdqs}, this theorem implies that the sequence $(W_n)$ satisfies Condition $(iii)$ of \Cref{pro:bill}. The proof of \Cref{th:tightness} is quite long: it will last until the end of Section \ref{sec:PTTG}.}

\paragraph{The important ``$S$'' notation:}
Fix an interval $[a,b]\in I$ and some $ \lambda_1 \leq \lambda_2\leq \lambda_3 \in[a,b]$. 
For $r\in\{1,2,3\}$ and $j\in\{1,2,3\}$, set
\begin{equation}\label{eq:defS}
S_r(W_n(\lambda_j)):=\sum_{i=1+X(\lambda_{r-1})}^{X(\lambda_r)} W_n^{(i)}(\lambda_j),
\end{equation}
where $(W_n^{(i)}(\lambda_j)\colon n\geq 0)_{i\geq 1}$ is a sequence of i.i.d.\ copies of $(W_n(\lambda_j)\colon n\geq 0)$.
Also recall that we set $\lambda_0 = 0$.
For example, we can write
\ben \label{eq:rel}
W_{n+1}(\lambda_j) 
= \frac{1}{\lambda_j}\sum_{i=1}^{X(\lambda_j)} W_n^{(i)}(\lambda_j)
= \sum_{\ell =1}^j \frac{\S{\ell}{W_n(\lambda_j)}}{\lambda_j}.
\een
We use the fact that, for all $i\neq j$, $W_n^{(i)}$ is independent of $W_n^{(j)}$, 
to get the following lemma:
\begin{lem} \label{lem:propS}
For any $\lambda_j, \lambda_\ell \in \{\lambda_1, \lambda_2, \lambda_3\}$,
$\S{r}{W_n(\lambda_j)}$ and $\S{s}{W_n(\lambda_\ell)}$ are independent iff $r\neq s$.
Moreover, $S_r$ is linear in the following sense: 
for all constants $c_1 ,c_2, c_3$,
\[S_r\bigg(\sum_{j=1}^3 c_j W_n(\lambda_j)\bigg)
=\sum_{j=1}^3c_j\S{r}{W_n(\lambda_j)}.\]
\end{lem}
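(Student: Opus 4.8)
The plan is to prove the two assertions separately, and — before doing so — to pin down the sense of ``independent'', since the ranges of summation in~\eqref{eq:defS} are themselves random, and the lengths $\Delta X(\lambda_r)$ of different windows are typically correlated under a general offspring process $X$. The honest and useful reading is: independence \emph{conditionally on} $\mathcal F_X:=\sigma(X(\lambda):\lambda\in I)$, which is the version actually exploited in the second-moment estimates of \Cref{th:tightness} (there one conditions on $\mathcal F_X$, so that each $S_r$ becomes a sum of a \emph{fixed} number $\Delta X(\lambda_r)$ of i.i.d.\ terms). The single structural fact I would use is the one recorded just before the lemma, made precise: the array $\big(\big(W_n^{(i)}(\lambda_1),W_n^{(i)}(\lambda_2),W_n^{(i)}(\lambda_3)\big):n\geq 0\big)_{i\geq 1}$ is i.i.d.\ in the index $i$ and is independent of $X$ — this is exactly the decomposition of the root's subtrees underlying~\eqref{eq:rel}.

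For linearity I would just unfold~\eqref{eq:defS}: the summation range $\cro{1+X(\lambda_{r-1}),X(\lambda_r)}$ of $S_r$ does not depend on which time $\lambda_j$ is plugged in, the $i$-th copy of $\sum_{j}c_jW_n(\lambda_j)$ is $\sum_j c_j W_n^{(i)}(\lambda_j)$, and exchanging the two finite sums gives $S_r\big(\sum_{j=1}^3 c_jW_n(\lambda_j)\big)=\sum_{j=1}^3 c_j S_r(W_n(\lambda_j))$; this is a one-line check. For the independence with $r\neq s$, say $r<s$: since $\lambda_{r-1}\leq\lambda_r\leq\lambda_{s-1}\leq\lambda_s$ and $X$ is a.s.\ non-decreasing, $X(\lambda_{r-1})\leq X(\lambda_r)\leq X(\lambda_{s-1})\leq X(\lambda_s)$, so the random index sets $I_r:=\cro{1+X(\lambda_{r-1}),X(\lambda_r)}$ and $I_s:=\cro{1+X(\lambda_{s-1}),X(\lambda_s)}$ are disjoint (one possibly empty). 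Conditioning on $\mathcal F_X$, the sets $I_r,I_s$ are deterministic and disjoint, $S_r(W_n(\lambda_j))$ is a measurable function of $(W_\bullet^{(i)})_{i\in I_r}$ only and $S_s(W_n(\lambda_\ell))$ of $(W_\bullet^{(i)})_{i\in I_s}$ only, and these two sub-families are independent because the array is i.i.d.\ in $i$ and independent of $X$; hence $S_r(W_n(\lambda_j))$ and $S_s(W_n(\lambda_\ell))$ are conditionally independent given $\mathcal F_X$.

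For $r=s$ the two sums are built from the \emph{same} random index set and the \emph{same} root-subtrees: taking $\lambda_j=\lambda_\ell$ makes them equal, and already $S_r(W_0(\lambda_j))=\Delta X(\lambda_r)$ is not a.s.\ constant under {\HYPR} outside degenerate cases, so independence cannot hold — which justifies the ``iff''. I do not expect a genuine obstacle here; the only point requiring care is the one flagged at the outset, namely to resist asserting \emph{unconditional} independence of $S_r$ and $S_s$ when $r\neq s$: the increments $\Delta X(\lambda_r)$ and $\Delta X(\lambda_s)$ are in general correlated, so $\mathrm{Cov}\big(\mathbb E[f(S_r)\mid\mathcal F_X],\mathbb E[g(S_s)\mid\mathcal F_X]\big)$ need not vanish. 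Conditioning on $\mathcal F_X$ is precisely what decouples them, and that conditional independence — together with $\E[S_r(W_n(\lambda_j))\mid\mathcal F_X]=\Delta X(\lambda_r)$ — is all that the subsequent computations need.
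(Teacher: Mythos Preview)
Your proof is correct and follows the paper's intended reasoning: the paper offers no formal proof here, merely the sentence ``We use the fact that, for all $i\neq j$, $W_n^{(i)}$ is independent of $W_n^{(j)}$, to get the following lemma'', and your argument unpacks exactly that.

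Your observation that the independence should be read \emph{conditionally on} $\mathcal F_X$ is both correct and sharper than the paper's statement. As you note, for a general offspring process the increments $\Delta X(\lambda_r)$ and $\Delta X(\lambda_s)$ need not be independent (they are in the Poisson example, but not in the binary or geometric ones), so unconditional independence of $S_r$ and $S_s$ can genuinely fail --- taking $n=0$ so that $W_0^{(i)}\equiv 1$ gives $S_r=\Delta X(\lambda_r)$, which makes this transparent. The paper is silently aware of this: when the lemma is invoked in the proof of \Cref{lem:Mom}, the first line reads ``Because of Lemma~\ref{lem:propS}, conditionally on $\{\Delta X_{\lambda_j}=x_j, 1\leq j \leq 3\}$, the three products inside the expectation on the left-hand side are independent.'' So the conditional reading is exactly what the paper uses, and your care in flagging it is warranted rather than pedantic.
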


Using this notation in~\eref{eq:rel}, we get
%\begin{linenomath}
  \begin{align}
\label{eq:dqgfsds}
\Delta W_{n+1}(\lambda_2) 
&=\frac{\S{1}{W_n(\lambda_2)}}{\lambda_2}+\frac{\S{2}{W_n(\lambda_2)}}{\lambda_2}- \frac{\S{1}{W_n(\lambda_1)}}{\lambda_1},\\
\Delta W_{n+1}(\lambda_3) 
&=\frac{\S{1}{W_n(\lambda_3)}}{\lambda_3}+\frac{\S{2}{W(\lambda_3)}}{\lambda_3}+\frac{\S{3}{W(\lambda_3)}}{\lambda_3}- \frac{\S{1}{W_n(\lambda_2)}}{\lambda_2}- \frac{\S{2}{W_n(\lambda_2)}}{\lambda_2}.
\end{align}%\end{linenomath}
Using~\eref{eq:rel} and the linearity of $S_r$, we can write
%\begin{linenomath}
  \begin{align} \label{eq:fqqd1}
\Delta W_{n+1}(\lambda_2) &=  \frac{1}{\lambda_2}\l[ T_1+T_2+T_3 \r],\\
\label{eq:fqqd2}\Delta W_{n+1}(\lambda_3) &=  \frac{1}{\lambda_3}\l[ T'_1+T'_2+T'_3+T'_4+T'_5\r],
\end{align}%\end{linenomath}
where we have set
\begin{equation}\label{eq:theTi}\left\{
\begin{array}{ccl}
             T_1=\S{1}{\Delta W_n(\lambda_2)}, &T_2 =\S{1}{- \frac{\Delta \lambda_2}{\lambda_1}W_n(\lambda_1)}, & T_3=\S{2}{W_n(\lambda_2)},\\[3pt]
T'_1= \S{1}{\Delta W_n(\lambda_3)}, &T'_2=\S{1}{- \frac{\Delta \lambda_3}{\lambda_2}W_n(\lambda_2)}, & T'_3=\S{2}{\Delta W_n(\lambda_3)},\\[3pt]
& T'_4= \S{2}{- \frac{\Delta \lambda_3}{\lambda_2}W_n(\lambda_2)}, & T'_5=\S{3}{W_n(\lambda_3)}.
  \end{array}\right.\end{equation}
The reason why we decompose $\Delta W_{n+1}(\lambda_2)$ and $\Delta W_{n+1}(\lambda_3)$ this way is because it maximises the number of $\Delta$'s; this is important because, intuitively, $\Delta$'s give terms that are small when $|\lambda_3-\lambda_1|$ goes to zero.

\paragraph{Proof strategy:} To prove Assumption~$(iii)$ of Proposition~\ref{pro:bill},
we start by writing 
\ben\label{eq:decT}
 \E\l(\Delta W_{n+1}(\lambda_2)^2\Delta W_{n+1}(\lambda_3)^2\r)=\frac{1}{(\lambda_2\lambda_3)^2}\sum_{1\leq i_1,i_2\leq 3 \atop{1\leq j_1,j_2\leq 5}} \E\l(T_{i_1}T_{i_2} T'_{j_1}T'_{j_2}\r).\een
Note that there are $\bs {9\times 25=225}$ terms in this sum, which we call ``$T$-moments'' from now on.
Our strategy is to analyse the contribution of each of these $T$-moments. 
To do so, we will first expand each of the 225 $T$-moments using Lemma \ref{lem:Mom} below. 
This will give
\ben\label{eq:final_rec}
\E\l(\Delta W_{n+1}(\lambda_2)^2\Delta W_{n+1}(\lambda_3)^2\r)
=\frac{\lambda_1}{(\lambda_2\lambda_3)^2}\E\l(\Delta W_{n}(\lambda_2)^2\Delta W_{n}(\lambda_3)^2\r)+\sum_{m} {\sf Term}_m
\een
where the index~$m$ ranges over several hundreds of values 
(there are more terms in this sum than the 225 initial terms of~\eqref{eq:decT}).
From now one, we call the ${\sf Term}_m$ the ``multinomials''.
Importantly, the sum in~\eqref{eq:final_rec} is finite, and each of the multinomials satisfies
\ben\label{eq:bound_terms}
{\sf Terms}_m\leq c_m (\lambda_3-\lambda_1)^{2\kappa}
\een 
for a finite constant $c_m$ (which depends on $m$).
To show~\eqref{eq:bound_terms}, we do not treat the several hundred multinomials one by one. 
Instead, we partition them in several families, and show that all multinomials in each of these families satisfy~\eqref{eq:bound_terms}.

Using~\eref{eq:bound_terms} in~\eref{eq:final_rec}, we get
\ben\label{eq:lastlast_rec}
\E\l(\Delta W_{n+1}(\lambda_2)^2\Delta W_{n+1}(\lambda_3)^2\r)\leq \frac{\lambda_1}{(\lambda_2\lambda_3)^2}\E\l(\Delta W_{n}(\lambda_2)^2\Delta W_{n}(\lambda_3)^2\r)+\sum_{m} c_m (\lambda_3-\lambda_1)^{2\kappa},
\een
which implies that for $[a,b]$ fixed, 
there exist some universal constants $A\in(0,1)$ and $C>0$ such that, for all $\lambda_1<\lambda_2<\lambda_3\in [a,b]$,
\[\E(\Delta W_{n+1}(\lambda_2)^2\Delta W_{n+1}(\lambda_3)^2) \leq A \E(\Delta W_{n+1}(\lambda_2)^2\Delta W_{n+1}(\lambda_3)^2)+ C(\lambda_3-\lambda_1)^{2\kappa}.\] 
Iterating this formula (see Lemma \ref{lem:rec} below, which can be applied since $\Delta W_0(\lambda_2)\Delta W_0(\lambda_3)=0$ a.s.) allows us to conclude that
\[\sup_{n\geq 0} \E\l(\Delta W_{n+1}(\lambda_2)^2\Delta W_{n+1}(\lambda_3)^2\r) \leq C_W(\lambda_3-\lambda_1)^{2\kappa}\]
for a finite constant $C_{W}>0$, as claimed. 
\begin{lem}\label{lem:rec}
Let $(U_n)_{n\geq 0}$ be a sequence of non-negative real numbers such that, for all $n\geq 0$,
\begin{equation}\label{eq:suite}
U_{n+1}\leq AU_n+B,
\end{equation}
for some constants $A\in [0,1)$ and $B\geq 0$. In this case, for all $n\geq 0$, 
$U_n\leq A^n U_0 + \frac B{1-A}$.
\end{lem}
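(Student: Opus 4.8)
\textbf{Proof plan for Lemma~\ref{lem:rec}.} The plan is to unroll the recursion~\eref{eq:suite} and recognise the resulting finite geometric sum, then bound it by the corresponding infinite geometric series. Concretely, I would first prove by an immediate induction on $n$ that
\[
U_n \leq A^n U_0 + B\sum_{k=0}^{n-1} A^k,
\]
with the convention that the empty sum (for $n=0$) equals $0$. The base case $n=0$ is the trivial identity $U_0\le U_0$. For the inductive step, if the bound holds at rank $n$, then applying~\eref{eq:suite} and the induction hypothesis gives
\[
U_{n+1}\leq A U_n + B \leq A\Bigl(A^n U_0 + B\sum_{k=0}^{n-1}A^k\Bigr) + B = A^{n+1}U_0 + B\sum_{k=1}^{n}A^k + B = A^{n+1}U_0 + B\sum_{k=0}^{n}A^k,
\]
which is the claimed bound at rank $n+1$; here I use $A\geq 0$ and $B\geq 0$ so that multiplying the inequality $U_n \le \cdots$ by $A$ and adding $B$ preserves the inequality.

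Then I would conclude using $A\in[0,1)$: since all terms $A^k$ are non-negative and $\sum_{k\ge 0}A^k = \frac1{1-A}$ converges, we have $\sum_{k=0}^{n-1}A^k \le \frac1{1-A}$ for every $n$, whence $U_n \le A^n U_0 + \frac{B}{1-A}$, as desired. (Equivalently, one may write $\sum_{k=0}^{n-1}A^k = \frac{1-A^n}{1-A}\le \frac1{1-A}$ directly.)

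There is no real obstacle here: the statement is an elementary consequence of the geometric series, and the only points requiring a (one-line) verification are that the hypotheses $A\in[0,1)$ and $B\geq 0$ are exactly what make the induction step monotone and the geometric series summable. I would keep the write-up to a few lines.
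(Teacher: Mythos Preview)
Your proposal is correct and matches the paper's own proof essentially verbatim: the paper also iterates~\eref{eq:suite} to obtain $U_n \leq A^n U_0 + B\sum_{i=0}^{n-1} A^i$ and then (implicitly) bounds the finite geometric sum by $\frac{B}{1-A}$. Your write-up simply makes the induction and the final bound explicit.
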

\begin{proof}
Iterating~\eqref{eq:suite}, we get 
$U_n \leq AU_{n-1}+B \leq A(AU_{n-2}+B) + B \leq A^n U_0 + B\sum_{i=0}^{n-1} A^i$.
\end{proof}
Thus, to conclude the proof, it only remains to prove~\eref{eq:final_rec} and~\eref{eq:bound_terms}.
To do this, we first describe the multinomials that appear in~\eqref{eq:final_rec}; this is done in Section~\ref{sec:AE} by expanding each of the $T$-moments.
We then show why each of these multinomials can be bounded by $c_m (\lambda_3-\lambda_1)^{2\kappa}$ (and thus why~\eqref{eq:bound_terms} holds); this is done in Section~\ref{sec:cla} by classifying the 225 of $T$-terms into four different classes.

\subsection{Algebraic expansion of $\E(\Delta W_{n+1}(\lambda_2)^2 \Delta W_{n+1}(\lambda_3)^2)$}
\label{sec:AE}

As already said, we write $\Fac{d_1,d_2,d_3}$ instead of $\Fac{d_1,d_2,d_3}(\lambda_1,\lambda_2,\lambda_3)$.

For a finite set $B$, we let ${\sf Part}(B,k)$ be the set of partitions of $B$ into $k$ non empty parts (the parts must be disjoint, and their union must be $B$). 
For example,
    \be
    {\sf Part}(\{1,2,3,4\},3)&=&\Big\{ [\{1\},\{2\},\{3,4\}],[\{1\},\{2,3\},\{4\}],[\{1\},\{2,4\},\{3\}],\\
    && [\{1,2\},\{3\},\{4\}],[\{1,3\},\{2\},\{4\}],[\{1,4\},\{2\},\{3\}]\Big\}.
    \ee
A partition is formally a set of sets. We consider the canonical representation of a partition as a sequence of sets, where the sequence is obtained by sorting the sets according to their smallest element, as done in the example above.

In the next lemma, we show how one can express each $T$-moment $\E(T_{i_1}T_{i_2}T'_{j_1}T'_{j_2})$ as a linear combination of moments of $(W_n(\lambda_1), W_n(\lambda_2), W_n(\lambda_3))$.
\begin{lem} \label{lem:Mom}
Let $n\geq 1$ and $(V_{n,j})_{j\geq 1}$ such that, for $j\geq 1$, $V_{n,j}\in\big\{W_n(\lambda_\ell), \Delta W_n(\lambda_\ell) \colon 1\leq\ell\leq 3\big\}$.

The next formula hold if the moments involved are well defined.\par
Assume that $B_1$, $B_2$ and $B_3$ are disjoint set of indices. 
We have 
\begin{equation}\label{eq:exp_part}
\E\l[\prod_{j\in B_1}S_1(V_{n,j}) \prod_{k\in B_2}S_2(V_{n,k})\prod_{\ell\in B_3}S_3(V_{n,\ell})\r]
= \sum_{d_1\geq 0\atop {d_2\geq 0 \atop d_3\geq 0}}{\sf Fac}^{\Delta}_{d_1,d_2,d_3}\prod_{\ell=1}^{3}\sum_{[A_1,\ldots,A_{d_\ell}]\atop \in{\sf Part}(B_\ell,d_\ell)} \prod_{k=1}^{d_\ell}\E\l(\prod_{s\in A_k}V_{n,s}\r)
\end{equation}
\end{lem}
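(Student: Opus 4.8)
The plan is to prove the identity~\eqref{eq:exp_part} by first reducing to the ``single-$S_r$'' case and then combining the three independent blocks. Recall that by \Cref{lem:propS}, the families $\{S_1(V_{n,j})\}_{j\in B_1}$, $\{S_2(V_{n,k})\}_{k\in B_2}$, $\{S_3(V_{n,\ell})\}_{\ell\in B_3}$ are built from three disjoint (hence independent) blocks of i.i.d.\ copies of the process $(W_n(\cdot))$: namely the copies indexed by $i\in\llbracket 1+X(\lambda_{r-1}),X(\lambda_r)\rrbracket$ for $r=1,2,3$. The only source of dependence between the three blocks is the random vector $(\Delta X(\lambda_1),\Delta X(\lambda_2),\Delta X(\lambda_3))=(X(\lambda_1),X(\lambda_2)-X(\lambda_1),X(\lambda_3)-X(\lambda_2))$. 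So first I would condition on $(\Delta X(\lambda_1),\Delta X(\lambda_2),\Delta X(\lambda_3))$; given these values, the three products over $B_1$, $B_2$, $B_3$ become independent, and we are left with analysing, for a single $r$, a quantity of the form
\[
\E\l[\prod_{j\in B}\,\sum_{i=1}^{m} V_{n,j}^{(i)}\ \Big|\ \Delta X(\lambda_r)=m\r],
\]
where the $V_{n,j}^{(i)}$ range over i.i.d.\ copies of the relevant coordinates of $(W_n(\lambda_1),W_n(\lambda_2),W_n(\lambda_3))$ (or their increments), all sharing the \emph{same} copy index $i$.

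The key combinatorial step is the expansion of $\prod_{j\in B}\big(\sum_{i=1}^{m} V_{n,j}^{(i)}\big)$. Expanding the product of sums, a generic term is indexed by a map $i\colon B\to\llbracket 1,m\rrbracket$, i.e.\ we choose a copy index $i(j)$ for each $j\in B$; the term is $\prod_{j\in B}V_{n,j}^{(i(j))}$. Grouping the maps $i$ according to the partition $[A_1,\dots,A_{d}]$ of $B$ induced by the fibres of $i$ (so $d$ is the number of distinct values taken, and $A_k$ is the set of $j$'s mapped to the $k$-th value), and using that distinct copy indices give independent factors while equal copy indices must be multiplied together, one gets
\[
\E\l[\prod_{j\in B}\sum_{i=1}^m V_{n,j}^{(i)}\r]
=\sum_{d\geq 0}\ (m)_{(d)}\sum_{[A_1,\dots,A_d]\in{\sf Part}(B,d)}\ \prod_{k=1}^{d}\E\l(\prod_{s\in A_k}V_{n,s}\r),
\]
where $(m)_{(d)}=m(m-1)\cdots(m-d+1)$ counts the number of ways to assign $d$ \emph{distinct} ordered values in $\llbracket 1,m\rrbracket$ to the $d$ (already unordered) blocks — here one must be a little careful that the sum over set-partitions is over \emph{unordered} partitions, so the falling factorial $(m)_{(d)}$ (rather than $\binom{m}{d}$) correctly accounts for which value goes to which block. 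Multiplying the three such expansions (one per $r\in\{1,2,3\}$) and taking the expectation over $(\Delta X(\lambda_1),\Delta X(\lambda_2),\Delta X(\lambda_3))$ produces the factor $\E\big[\prod_{r=1}^3(\Delta X(\lambda_r))_{(d_r)}\big]={\sf Fac}^{\Delta}_{d_1,d_2,d_3}$, which is exactly the coefficient appearing in~\eqref{eq:exp_part}.

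Two routine points finish the argument. First, one checks the $d=0$ convention: the empty product over $B=\varnothing$ equals $1$, which matches $\sum_{[\,]\in{\sf Part}(\varnothing,0)}\prod_{k=1}^0(\cdots)=1$, and $(m)_{(0)}=1$; thus blocks $B_r$ that are empty contribute trivially and force $d_r=0$. Second, the interchange of expectation and the (finite) sums is legitimate precisely under the stated hypothesis that ``the moments involved are well defined'': each $V_{n,j}$ is one of finitely many bounded-moment random variables (under the assumptions of \Cref{thm:Geninf}, $W_n(\lambda)$ has moments of all the orders that occur here), the index set $B$ is finite, and conditionally on $\Delta X$ the sums are finite, so no convergence issue arises. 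The main obstacle — really the only place where care is needed — is the bookkeeping in the combinatorial step: keeping straight that the inner sum ranges over \emph{unordered} set-partitions while the copy indices are \emph{ordered} labels, so that the correct multiplicity is the falling factorial $(m)_{(d)}$ and not $\binom{m}{d}$ or $d!\binom{m}{d}$; getting this wrong would misstate the coefficient ${\sf Fac}^{\Delta}_{d_1,d_2,d_3}$. Everything else is a direct expansion and an application of independence via \Cref{lem:propS}.
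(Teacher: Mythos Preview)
Your proposal is correct and follows essentially the same route as the paper's proof: condition on $(\Delta X(\lambda_1),\Delta X(\lambda_2),\Delta X(\lambda_3))$, use \Cref{lem:propS} to factor the three blocks, expand each $\prod_{j\in B_\ell}\sum_{i} V_{n,j}^{(i)}$ as a sum over maps $i\colon B_\ell\to\llbracket1,m\rrbracket$, group by the induced partition, and recognise the falling factorial as the count of label assignments. Your extra care about the $d=0$ convention and the ordered-labels/unordered-partitions bookkeeping is well placed and matches the paper's canonical-representation convention for ${\sf Part}(B,d)$.
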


We show on three particular examples how this formula can be applied to each of the $T$-moments:

\medskip
\textbf{Example 1.} We use Lemma~\ref{lem:Mom} to expand 
\[Q_1:=\E\l(\S{1}{W_n(\lambda_1)}^2\S1{\Delta W_n(\lambda_2)}\,\S3{ W_n(\lambda_3)}\r).\]
We set $V_{n,1} = V_{n,2} = W_n(\lambda_1)$, $V_{n,3}=\Delta W_n(\lambda_2)$, and $V_{n,4}=W_n(\lambda_3)$. We also set $B_1=\{1,2,3\}$, $B_2=\varnothing$, and $B_3=\{4\}$. 
With this notation, $Q_1$ is indeed equal to the left-hand side of~\eqref{eq:exp_part}. 

We now look at the right-hand side of~\eqref{eq:exp_part}. First note that $B_1$ can be partitioned into 1, 2, or 3 parts, i.e.\ $d_1$ ranges from 1 to 3 in the right-hand side of~\eqref{eq:exp_part}. We have ${\sf Part}(B_1, 1)=\{ [\{1,2,3\}]\}$, ${\sf Part}(B_1,2)=\{ [\{1,2\},\{3\}],[\{1\},\{2,3\}],[\{1,3\},\{2\}]\}$, and ${\sf Part}(B_1,3)=\{ [\{1\},\{2\},\{3\}]\}$.
Similarly, $d_2 = 0$ and (by convention) ${\sf Part}(B_2,0)= \{[\varnothing]\}$.
Finally, $d_3 = 1$, and ${\sf Part}(B_3,1)=\{[\{4\}]\}$.
Applying~\eqref{eq:exp_part}, we thus get
\be
Q_1&=&{\sf Fac}^{\Delta}_{1,0,1} \l[\E( V_{n,1}V_{n,2}V_{n,3})\r]\l[\E(V_{n,4})\r]\\
&+&{\sf Fac}^{\Delta}_{2,0,1} \l[\E( V_{n,1}V_{n,2})\E(V_{n,3})+\E( V_{n,1})\E(V_{n,2}V_{n,3})+\E( V_{n,1}V_{n,3})\E(V_{n,2})\r]\l[\E(V_{n,4})\r]\\
&+&{\sf Fac}^{\Delta}_{3,0,1}\l[\E( V_{n,1})\E(V_{n,2})\E(V_{n,3})\r]\l[\E(V_{n,4})\r]
\ee
In the second term of the sum, since $d_1 = 2$ in $\Fac{d_1,d_2,d_3}$ we separate the product $\prod_{j\in B_1} V_{n,j}$ into two independent non-empty products. There are three possible ways to do that, and they give the following sum of three terms: $\E(V_{n,1}V_{n,2})\E(V_{n,3})+\E(V_{n,1})\E(V_{n,2}V_{n,3})+\E( V_{n,1}V_{n,3})\E(V_{n,2})$.

\medskip
{\bf Example 2.} We now show how to apply Lemma~\ref{lem:Mom} to expand 
\[Q_2 := \E\l(\S{2}{W_n(\lambda_3)}\S{3}{\Delta W_n(\lambda_1)}\r).\] 
To do so, we set $V_{n,1}=W_n(\lambda_3)$, $V_{n,2}=\Delta W_n(\lambda_1)$, $B_1=\varnothing, B_2= \{1\}$, and $B_3=\{2\}$. 
With these definitions, $Q_2$ equals the left-hand side of~\eqref{eq:exp_part}.
We now look at the right-hand side of~\eqref{eq:exp_part}: 
Since $B_1$ is empty, and since both $B_2$ and $B_3$ have one element, the only possibility is $d_1 = 0$ and $d_2 = d_3 = 1$. We thus get 
\[\E\l(\S{2}{W_n(\lambda_3)}\S{3}{\Delta W_n(\lambda_1)}\r)
=\Fac{0,1,1} \E(W_n(\lambda_3))\E(\Delta W_n(\lambda_1))=0,\]
since $\E(\Delta W_n(\lambda_1))=0$.
This is not surprising; indeed, we have
\[Q_2 = \E\l(\sum_{i=X(\lambda_{1})+1}^{X(\lambda_2)} W_n^{(i)}(\lambda_3) \sum_{j=X(\lambda_{2})+1}^{X(\lambda_3)} \Delta W_n^{(j)}(\lambda_1)\r).\]
Because the sequence $(W^{(i)}_n(\lambda)\colon n\geq 0, \lambda>1)_{i\geq 1}$ 
is a sequence of i.i.d.\ copies of $(W_n(\lambda)\colon n\geq 0, \lambda>1)$, we indeed get
\[Q_2 = \E\big[(\Delta X(\lambda_2))(\Delta X(\lambda_1))\big] \E(W_n(\lambda_3))
\E(\Delta W_n(\lambda_1))=0.\]

\medskip
{\bf Example 3.} We show how to use Lemma~\ref{lem:Mom} to expand 
\[Q_3:=\E\l(\S{3}{W_n(\lambda_3)}\S{3}{\Delta W_n(\lambda_1)}\r).\] 
We set $V_{n,1} = W_n(\lambda_3)$, $V_{n,2} = \Delta W_n(\lambda_1)$, $B_1=B_2=\varnothing$ and $B_{3}=\{1,2\}$, so that $Q_3$ is indeed of the form of the left-hand side of~\eqref{eq:exp_part}. 
We have ${\sf Part}(B_3,1)=\{[\{1,2\}]\}$ and ${\sf Part}(B_3,2)=\{[\{1\},\{2\}]\}$, so that $d_1=0,d_2=0, 1\leq d_3\leq 2$. We thus get
\be
\E\l(\S{3}{W_n(\lambda_3)}\S{3}{\Delta W_n(\lambda_1)}\r)&=& \Fac{0,0,1}\E\l(W_n(\lambda_3)\Delta W_n(\lambda_1)\r) + \Fac{0,0,2}\E\l(W_n(\lambda_3)\r)\E\l(\Delta W_n(\lambda_1)\r)\\
&=&\Fac{0,0,1}\E\l(W_n(\lambda_3)\Delta W_n(\lambda_1)\r)
\ee
Again, this can be checked directly by computing 
\[\E\l(\sum_{i=X(\lambda_{2})+1}^{X(\lambda_3)} W_n^{(i)}(\lambda_3) \sum_{i=X(\lambda_{2})+1}^{X(\lambda_3)} \Delta W_n^{(i)}(\lambda_1)\r)\]
and by regrouping the terms involving the same ${(i)}$ and the others.
The advantage of Lemma~\ref{lem:Mom} is to give a general formula that applies to all of the $\E(T_{i_1}T_{i_2}T'_{j_1}T'_{j_2})$.

\begin{proof}[Proof of Lemma \ref{lem:Mom}]
Because of Lemma~\ref{lem:propS}, conditionally on  $\{\Delta X_{\lambda_j}=x_j, 1\leq j \leq 3\}$, the three products inside the expectation on the left-hand side are independent.
We thus need to calculate 
$\E\l(\prod_{j\in B_\ell}S_\ell(V_{n,j}) ~|~\Delta X(\lambda_\ell)\r)$.
Now recall that, by definition of $S_\ell$ (see~\eqref{eq:defS}), we have
\[\prod_{j\in B_\ell}S_\ell(V_{n,j}) 
= \prod_{j\in B_\ell}\sum_{i=X(\lambda_{\ell-1})+1}^{X(\lambda_\ell)} V^{(i)}_{n,j}
= \sum_{i_1=X(\lambda_{\ell-1})+1}^{X(\lambda_\ell)}\cdots 
\sum_{i_m=X(\lambda_{\ell-1})+1}^{X(\lambda_\ell)} V^{(i_1)}_{n,j_1}\cdots V^{(i_m)}_{n,j_m},\]
where $m=m(\ell)$ is the cardinal of $B_\ell = \{j_1, \ldots, j_m\}$.
Shifting the indices from the range $[X(\lambda_{\ell-1})+1, X(\lambda_{\ell})]$ to $[1, \Delta X(\lambda_\ell)]$ does not affect the distribution of the right-hand side, implying that
\[\E\Bigg(\prod_{j\in B_\ell}S_\ell(V_{n,j})~\mid~~\Delta X(\lambda_\ell)=x\Bigg)
= \sum_{i_1=1}^{x}\cdots 
\sum_{i_m=1}^{x} \E\big(V^{(i_1)}_{n,j_1}\cdots V^{(i_m)}_{n,j_m}\big).
\]
We now re-write this sum by grouping 
the indices $i_1, \ldots, i_m$ that are equal and using independence when the indices differ. 
For all $k\in \{1, \ldots, x\}$, group all $j_\ell$'s such that $i_\ell = k$ into one (possibly empty) part.
This forms a partition of $B_\ell$. We decompose the sum above depending on the number of non-empty parts in this partition of $m$, which we call~$d$: this gives
\[\E\l(\prod_{j\in B_\ell}\S{\ell}{V_{n,j}} ~|~\Delta X(\lambda_\ell)=x\r)
= \sum_{d\geq 1} x(x-1)\cdots (x-d+1)\sum_{[A_1,\cdots,A_{d}]\in{\sf Part}(B_\ell, d)} 
\prod_{k=1}^{d}\E\l(\prod_{\ell\in A_k}V_{n,\ell}\r).\]
The factor $x(x-1)\cdots (x-d+1)$ is the number of different choices for the common index~$i$ for the first, second, etc parts of the partition: there are $x$ choices for the first part, $x-1$ choices for the second part, and so on.
\end{proof}

\subsubsection{Expansion of $\E(\Delta W_{n+1}(\lambda_2)^2\Delta W_{n+1}(\lambda_2)^3)$: classification of the contributions}
\label{sec:graphical}
The aim of this section is to show how to apply Lemma~\ref{lem:Mom} to each of the $T$-moments that appear in~\eqref{eq:decT}.
This allows us to eventually write 
$\E(\Delta W_{n+1}(\lambda_2)^2\Delta W_{n+1}(\lambda_3)^2)$ 
as in~\eqref{eq:final_rec}, where each of the multinomials is a coefficient $\Fac{d_1, d_2, d_3}$ 
times a product of ``monomials''~$\E(\prod_{r\in A_k}V_{n,r})$.
Since we see $\E(\prod_{r\in A_k}V_{n,r})$ as a monomial, we will call $|A_k|$ its ``degree''.
If the degree is~1, then, because all the involved $V_{n,j}$ belongs to $\cup_{i=1}^3\{W_n(\lambda_i), \Delta W_n(\lambda_i)\}$ (see \eref{eq:fqqd1} and \eref{eq:fqqd2}), 
$\E(V_{n,j})$ is either 1 or 0 (because $\E(W_n(\lambda_i))=1$ and $\E(\Delta W_n(\lambda_i))=0$).
Monomials of degrees 2 correspond to correlations, 
and degree 3 and 4 are the most difficult monomials to handle in our analysis.

\paragraph{Graphical representation of the complete computation}
First, represent in an array, as in Fig.\ \ref{fig:rep0}, the $T_i$ and the $T'_i$ as defined in \eref{eq:theTi}.
\begin{figure}[!htbp]
\centerline{\includegraphics{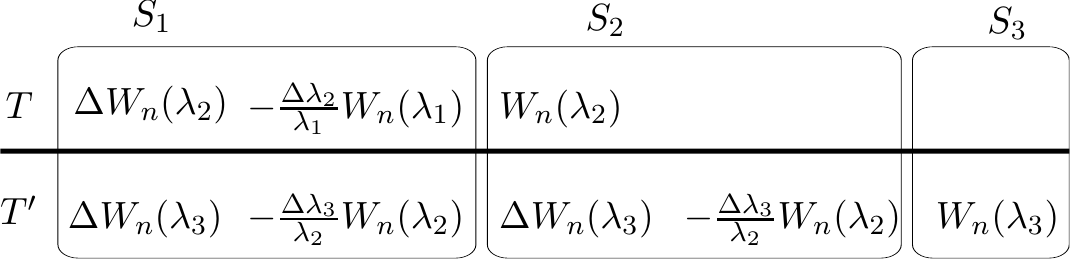}}
\caption{\label{fig:rep0} On the top line $T_1$, $T_2$, $T_3$ in this order (for example $T_1=\S{1}{\Delta W_n(\lambda_2)}$); on the second line, the $T'_1, T'_2,T'_3,T'_4,T'_5$ in this order, for example, $T_3'=\S{2}{\Delta W_n(\lambda_3)}$.}
\end{figure}
With this graphical representation, each element $(T_{i_1},T_{i_2},T'_{j_1},T'_{j_2})$ of the sum in~\eqref{eq:decT} can be obtained by multiplying an ordered pair of elements (with repetition allowed) above the line and an ordered pair of elements below the line. Two examples are given below:
\begin{figure}[!htbp]
\centerline{\includegraphics{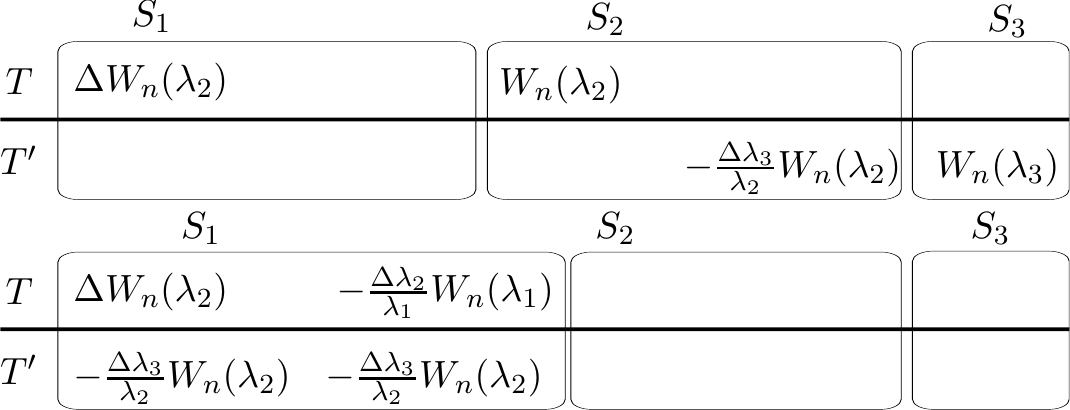}}
\caption{Two examples of choices for $(i_1, i_2, j_1, j_2)$: in the first case on the top, we have selected $(T_1,T_3,T'_4,T'_5)$ and, in the second case on the bottom we have selected $(T_1,T_2, T'_2, T'_2)$ (repetitions are allowed: here, we have chosen $j_1 = j_2 = 2$).}
\label{fig:rep4}
\end{figure}
Each of the $T$-moments can be represented using this graphical tool, and this becomes useful when applying Lemma \ref{lem:Mom}.
Indeed, in this graphical representation, we can see that the four terms $T_{i_1}, T_{i_2}, T'_{j_1}, T'_{j_2}$ are partitioned into three (possibly empty) groups: the $S_1$-group, the $S_2$-group and the $S_3$-group, represented graphically by the three rounded rectangles.
In the right-hand side of~\eqref{eq:exp_part}, we consider all possible ways to refine this partition, 
meaning that each part of the chosen partition must be included in one of the three rounded rectangles, as for example in Fig.\ \ref{fig:rep5}.
\begin{figure}[!htbp]
\centerline{\includegraphics{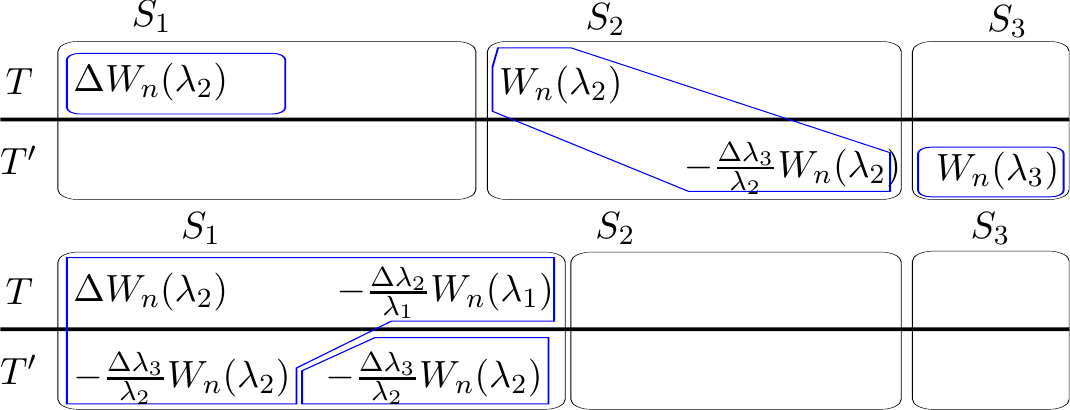}} \caption{\label{fig:rep5} Two refined partitions of, respectively, the top and bottom examples in Fig.\ \ref{fig:rep4}.}
\end{figure}\par
If, in a refined partition, $S_1$ is split in $d_1$ parts, $S_2$ in $d_2$ parts, and $S_3$ in $d_3$ parts, then the contribution of this partition $\Pi$ to the right-hand side of~\eqref{eq:exp_part} is the multinomial
 \[\Fac{d_1,d_2,d_3}\prod_{P \in \Pi} \E\Bigg(\prod_{e \in P} e\Bigg),\]
 where we sum on all the parts of the refined partition $\Pi$, and then multiply on all elements of this part~$P$. Note that $d_1, d_2\leq 4$, and $d_3\leq 2$, since there are maximum 4 terms in the rounded rectangle associated to $S_1$, resp.\ $S_2$, and maximum 2 terms in the rounded rectangle associated to~$S_3$. Furthermore, $d_1+d_2+d_3=4$ since there are four terms in total: $T_{i_1}, T_{i_2}, T'_{j_1}$, and $T'_{j_2}$.

For example, the contribution of the refined partition on the top of Fig.\ \ref{fig:rep5} is the multinomial
 \[\Fac{1,1,1} \E\l[ \Delta W_n(\lambda_2)\r] \E\l[\Delta W_n(\lambda_2)\l(- \frac{\Delta \lambda_3}{\lambda_2}W_n(\lambda_2)\r)\r]\E\l[W_n(\lambda_3)\r],\]
and the contribution of the refined partition on the bottom of Fig.\ \ref{fig:rep5} is the multinomial
\[\Fac{2,0,0} \E\l[ \Delta W_n(\lambda_2)\l(- \frac{\Delta \lambda_2}{\lambda_1}W_n(\lambda_1)\r)  \l(- \frac{\Delta \lambda_3}{\lambda_2}W_n(\lambda_2)\r)\r]\E\l[- \frac{\Delta \lambda_3}{\lambda_2}W_n(\lambda_2)\r].\]

From this graphical representation, one can see that the only way to get a term that contains 
$\E\l(\Delta W_{n}(\lambda_2)^2\Delta W_{n}(\lambda_3)^2\r)$ is to have 4 terms of type $S_1$ in the same part of the refined partition. 
This only occurs in the development of the $T$-moment $\E(T_1,T_1,T'_1,T'_1)$, and only for $d_1 = 1$, $d_2 = d_3 = 0$. Thus, from~\eqref{eq:decT}, we get
\[\E\l(\Delta W_{n+1}(\lambda_2)^2\Delta W_{n+1}(\lambda_3)^2\r)
= \frac{\Fac{1,0,0}}{(\lambda_2\lambda_3)^2}
\E\l(\Delta W_{n}(\lambda_2)^2\Delta W_{n}(\lambda_3)^2\r)
+ \sum_{m}{\sf Term}_m,\]
and the multinomial $\E\l(\Delta W_{n}(\lambda_2)^2\Delta W_{n}(\lambda_3)^2\r)$ does not appear in any of the multinomials in the sum.
Because $\Fac{1,0,0}=\lambda_1$, this gives~\eref{eq:final_rec}.

\subsubsection{Conclusion}\label{sec:cla}
To conclude the proof, it only remains to bound all of the multinomials by $c_m(\lambda_3-\lambda_1)^{2\kappa}$, as announced in~\eqref{eq:bound_terms}.
To do so, we need the following lemma:
\begin{lem}\label{lem:bounds_lower_moms}
Fix a compact subinterval $[a,b]$ of $I$. If {\HYPR} and {\HYPM} hold, then
\begin{enumerate}[(i)]
\item \label{it:a01} For any $a\leq \lambda_1\leq \lambda_2\leq \lambda_3\leq b$,
\[ \Delta \lambda_2 \leq (\lambda_3-\lambda_1), ~~\Delta \lambda_3 \leq (\lambda_3-\lambda_1).\]
    \item  \label{it:a1} For any $k_1,k_2,k_3\in\{0,1,2,\cdots\}$ such that $k_1+k_2+k_3\leq 4$,
      \be
      \ov{M}_{k_1,k_2,k_3}:= \sup_{n} \sup_{a\leq \lambda_1\leq \lambda_2\leq \lambda_3\leq b}\E\l[ W_n(\lambda_1)^{k_1}W_n(\lambda_2)^{k_2}W_n(\lambda_3)^{k_3}\r]<+\infty\ee
     
      \item  \label{it:a4}  If $1\leq k_1+k_2 \leq 3$ and $k_1+k_2+j_1+j_2+j_3\leq 4$ then, there exists a constant $C\geq 0$, such that for all $a\leq \lambda_1\leq \lambda_2\leq \lambda_3\leq b$,
        \[\sup_{n\geq 0} \l|\mathbb E\l[(\Delta W_n(\lambda_2))^{k_1}\,(\Delta W_n(\lambda_3))^{k_2}\,W_n(\lambda_1)^{j_1}\,W_n(\lambda_2)^{j_2}\,W_n^{j_3}(\lambda_3)\r]\r|\leq C(\lambda_3-\lambda_1)^\kappa.\]        \end{enumerate}
    \end{lem}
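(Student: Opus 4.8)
The three items will be proved in the order stated, since (ii) feeds into (iii) while (i) is immediate: from $\lambda_1\le\lambda_2\le\lambda_3$ we have $\Delta\lambda_2=\lambda_2-\lambda_1\le\lambda_3-\lambda_1$ and $\Delta\lambda_3=\lambda_3-\lambda_2\le\lambda_3-\lambda_1$. A preliminary observation, used throughout, is that {\HYPM} forces $\sup_{\lambda\in[a,b]}\E[X(\lambda)^4]<+\infty$; this is the role of~\eqref{eq:easy2}: since $0\le X(\lambda)-X(a)\le X(\lambda)$, that inequality gives $\E[(X(\lambda)-X(a))^4]\le\E[(X(\lambda)-X(a))X(\lambda)^3]\le C'(b-a)^\kappa$, and combined with the elementary bound $X(\lambda)^4\le 2(X(\lambda)-X(a))X(\lambda)^3+16X(a)^4$ (split according to whether $X(a)\le X(\lambda)/2$) it reduces everything to $\E[X(a)^4]<+\infty$, which holds by the same argument applied slightly to the left of $a$. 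In particular $\E[W_n(\lambda)^4]<+\infty$ for every $n$ and $\lambda\in[a,b]$, and every factorial moment $\Fac{d_1,d_2,d_3}$ with $d_1+d_2+d_3\le4$ is finite and bounded on $[a,b]$, so all the expansions below are legitimate.

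For (ii) I would induct on $k':=k_1+k_2+k_3$. The cases $k'\le1$ are trivial ($\ov M=1$). For $k'\ge2$, write $W_{n+1}(\lambda_i)=\sum_{\ell\le i}S_\ell(W_n(\lambda_i))/\lambda_i$ as in~\eqref{eq:rel}, multiply out, and apply Lemma~\ref{lem:Mom}. The only contributions that still contain the full degree-$k'$ moment $\E[W_n(\lambda_1)^{k_1}W_n(\lambda_2)^{k_2}W_n(\lambda_3)^{k_3}]$ are those coming from a partition into a single part whose factors all share one $S$-index: only $S_1$ (coefficient $\Fac{1,0,0}=\lambda_1$), only $S_2$ (possible only if $k_1=0$; coefficient $\Fac{0,1,0}=\lambda_2-\lambda_1$), or only $S_3$ (possible only if $k_1=k_2=0$; coefficient $\Fac{0,0,1}=\lambda_3-\lambda_2$). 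These coefficients telescope, and after division by $\lambda_1^{k_1}\lambda_2^{k_2}\lambda_3^{k_3}$ they sum to at most $a^{1-k'}<1$, uniformly on $[a,b]$, since $\lambda_1\le\lambda_2\le\lambda_3$ and $\lambda_i\ge a>1$. Every other multinomial is $\Fac{d_1,d_2,d_3}$ (finite) times a product of $W_n$-moments of total degree $<k'$ (the underlying partition has at least two parts), hence bounded uniformly in $n$ by the induction hypothesis. Lemma~\ref{lem:rec}, applied with $U_n=\ov M^{(n)}_{k_1,k_2,k_3}$ and $U_0=1$, then yields $\ov M_{k_1,k_2,k_3}<+\infty$.

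For (iii) I would induct on the total degree $k:=k_1+k_2+j_1+j_2+j_3$; the case $k=1$ is trivial since the left-hand side equals $\pm\E[\Delta W_n(\lambda_\bullet)]=0$. For $k\ge2$, expand $\E[(\Delta W_{n+1}(\lambda_2))^{k_1}(\Delta W_{n+1}(\lambda_3))^{k_2}W_{n+1}(\lambda_1)^{j_1}W_{n+1}(\lambda_2)^{j_2}W_{n+1}(\lambda_3)^{j_3}]$ using~\eqref{eq:rel} and the $T$-decomposition~\eqref{eq:fqqd1}, \eqref{eq:fqqd2}, \eqref{eq:theTi}, then Lemma~\ref{lem:Mom}. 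There is a single ``recursive'' multinomial — all atoms of type $S_1$, every $\Delta$-atom instantiated as $T_1$ or $T'_1$, partition into one part — which reproduces the level-$n$ quantity with coefficient $\lambda_1/(\lambda_2^{k_1}\lambda_3^{k_2}\lambda_1^{j_1}\lambda_2^{j_2}\lambda_3^{j_3})\le a^{1-k}<1$. For each of the remaining multinomials I would exhibit a factor of size $O((\lambda_3-\lambda_1)^\kappa)$: if some atom is of type $S_2$ or $S_3$, then $d_2\ge1$ or $d_3\ge1$ in $\Fac{d_1,d_2,d_3}$, which is $O((\lambda_3-\lambda_1)^\kappa)$ by~\eqref{eq:dsdity}; otherwise, if some atom is $T_2$ or $T'_2$, there is a prefactor $\Delta\lambda_2/\lambda_1$ or $\Delta\lambda_3/\lambda_2$, each of size $O(\lambda_3-\lambda_1)$, hence $O((\lambda_3-\lambda_1)^\kappa)$ since $\kappa<1$ and $\lambda_3-\lambda_1$ is bounded on $[a,b]$; and otherwise every atom is of type $S_1$ and every $\Delta$-atom is a genuine $\Delta W_n(\lambda_\bullet)$, so at least one such $\Delta$ occurs, and either it lies in a singleton part of the refinement (the multinomial vanishes, as $\E[\Delta W_n(\lambda_\bullet)]=0$) or it lies in a part of size $\ge2$, whose moment is of the form treated in (iii) but of total degree $<k$, hence $O((\lambda_3-\lambda_1)^\kappa)$ by the induction hypothesis on $k$. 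In every case the remaining factors are moments of products of at most four of the $W_n(\lambda_i)$ and $\Delta W_n(\lambda_i)$, hence bounded uniformly in $n$ by (ii) and Hölder's inequality. Summing the finitely many multinomials and invoking Lemma~\ref{lem:rec} (with $U_0=0$, since $\Delta W_0\equiv0$) closes the induction.

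The main obstacle is the case analysis in (iii): one must verify that \emph{every} multinomial other than the recursive one carries a factor $O((\lambda_3-\lambda_1)^\kappa)$, which requires enumerating the atom choices and partition refinements carefully, and — crucially — organising the induction on the total degree $k$ rather than merely on $n$, so that the part-(iii) moments reused as a ``source of smallness'' always have strictly smaller degree, thereby avoiding circularity; note also that if several parts are of this ``source'' type one keeps the $O((\lambda_3-\lambda_1)^\kappa)$ from only one of them and bounds the others crudely via (ii) and Hölder, keeping the recursion linear. The preliminary step $\sup_{[a,b]}\E[X(\lambda)^4]<+\infty$ is routine but is what makes the manipulations and Lemma~\ref{lem:Mom} applicable in the first place.
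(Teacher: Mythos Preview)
Your argument is correct. Parts (i) and (ii) are essentially the paper's proof verbatim: the same induction on $k_1+k_2+k_3$, the same identification of the at-most-three ``all in one $S_\ell$-block'' recursive contributions whose coefficients telescope to a number $\le a^{1-k'}<1$, and the same appeal to Lemma~\ref{lem:rec}. Your preliminary step deriving $\sup_{[a,b]}\E[X(\lambda)^4]<\infty$ from~\eqref{eq:easy2} is a useful explicit justification that the paper leaves implicit.

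For (iii) the paper proceeds differently. It first reduces $k_1+k_2\ge 2$ to $k_1+k_2=1$ by expanding all but one $\Delta W_n$ via the triangle inequality, and then runs two separate inductions on the number $D=j_1+j_2+j_3$ of extra $W$-factors, one for $A_n=\E[\Delta W_n(\lambda_2)\prod W_n(\lambda_{m_i})]$ and one for $B_n=\E[\Delta W_n(\lambda_3)\prod W_n(\lambda_{m_i})]$. You instead induct directly on the total degree $k=k_1+k_2+j_1+j_2+j_3$ and handle all $(k_1,k_2)$ at once. Your trichotomy (some $S_2/S_3$ atom $\Rightarrow$ small $\Fac{}$; else some $T_2/T'_2$ atom $\Rightarrow$ small $\Delta\lambda$; else all $S_1$ with genuine $\Delta$'s, $\ge 2$ parts $\Rightarrow$ a part of degree $<k$ carrying a $\Delta$) is exhaustive and each case is handled correctly, including the observation that the recursive coefficient $\lambda_1/(\lambda_2^{k_1}\lambda_3^{k_2}\lambda_1^{j_1}\lambda_2^{j_2}\lambda_3^{j_3})\le\lambda_1^{1-k}\le a^{1-k}<1$. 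The paper's reduction buys a shorter atom list (only one $\Delta$-factor to track) at the cost of two parallel arguments; your single induction is more uniform but requires the slightly more elaborate case analysis you sketch. Either way the engine is the same: isolate one contractive recursive term and show every other multinomial picks up a factor $O((\lambda_3-\lambda_1)^\kappa)$ from $\Fac{}$, from $\Delta\lambda$, or from a lower-degree (iii)-moment.
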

    \begin{proof}
    $(i)$ is straightforward. We prove $(ii)$ in Section~\ref{sub:proof_key_lemma1}, and $(iii)$ in Section~\ref{sub:proof_key_lemma2}.
    \end{proof}

To prove the bounds of~\eref{eq:bound_terms}, we use the graphical representation of Section~\ref{sec:graphical}. 
Each of the multinomials comes from the expansion 
of a $T$-moment.
We divide the $T$-moments into three groups: the $T$-moments that involve at least one element from the rounded rectangle associated to $S_3$ (i.e.\ $j_1=5$ or $j_2 =5$), the $T$-moments that involve no element from $S_3$ but at least one element from $S_2$, and finally, the $T$-moments that involve no element from $S_2$ or $S_3$.

\paragraph{$T$-moments that involve elements from $S_3$.} 
Note that the only term from $S_3$ is $T'_5$, 
and it can appear once or twice in a $T$-moment $\E(T_{i_1}T_{i_2}T'_{j_1}T'_{j_2})$. 
Apart from $T'_5$, only $T_3$, which is an element from $S_2$, does not contain the symbol $\Delta$. 

Recall that, intuitively, the terms containing a $\Delta$ go to zero when $\lambda_3$ tends to $\lambda_1$. Therefore, intuitively, the worst possible case is $\E(T_3^2 (T'_5)^2)$. We expand this using Lemma~\ref{lem:Mom}: 
we get a sum of multinomials, which are the product of a prefactors $\Fac{0,y,z}$ with $1\leq y, z \leq 2$ times a product of monomials of degree at most~4 in $W_n(\lambda_2)$ and $W_n(\lambda_3)$. 
By~\eref{eq:dsdity2} and Lemma~\ref{lem:bounds_lower_moms}{\it\eref{it:a1}} these multinomials are indeed all bounded by $C(\lambda_3-\lambda_1)^{2\kappa}$ as claimed in~\eqref{eq:bound_terms}.

Now assume that $j_1 = 5$ or $j_2=5$, and $T_{i_1}T_{i_2}T'_{j_1}T'_{j_2}\neq T_3^2(T'_5)^2$.
In view of Fig.\ \ref{fig:rep0} this means that one of $T_{i_1}, T_{i_2}, T'_{j_1}$, and $T'_{j_2}$ is either:
\begin{itemize}
    \item  a term of the form $\Delta W_n(\lambda_\ell)$; this term can be a $S_1$ or a $S_2$-term: in any case by Lemma \ref{lem:bounds_lower_moms}{\it\eref{it:a4}}, any monomial containing such a term is bounded by $C(\lambda_3-\lambda_1)^{\kappa}$.
    \item a term that contains a~$\Delta \lambda_j$; using Lemma~\ref{lem:bounds_lower_moms}{\it\eref{it:a1}}, any monomial containing at least one of these terms is bounded in absolute value by $C(\lambda_3-\lambda_1)$ for a universal constant $C$.
    \end{itemize}
In both cases, the prefactor $\Fac{x,y,z}$ with $z\in\{1,2\}$ brings the extra term $C(\lambda_3-\lambda_1)^{\kappa}$ needed by \eref{eq:dsdity} or \eref{eq:dsdity2}.

\paragraph{Terms that involve no $S_3$ terms but at least one $S_2$-term.}
The only term in the $S_1$ or $S_2$ group that comes without any $\Delta$ is $T_3$;
thus, the ``worst case'' for a $T$-moment in this group is to have $i_1 = i_2 = 3$.
All multinomials obtained when expanding such a $T$-moment come with a prefactor $\Fac{d_1, d_2, 0}$ with $0\leq d_1, d_2\leq 4$ and $d_1+d_2\leq 4$. Also note that since the $T$-factor contains at least one term from $S_2$, it contains at most three terms from $S_1$. This implies $d_2\geq 1$ and $d_1\leq 3$ (also, $d_3 = 0$ because this $T$-moment contains no $S_3$-term). We distinguish cases according to the value of $d_1$. 
First note that, by {\HYPM}, since $d_2\neq 0$, we have $\Fac{d_1, d_2, 0}\leq C(\lambda_3-\lambda_1)^{\kappa}$.
\begin{itemize}
\item If $d_1 = 3$, then the corresponding multinomial (without its prefactor) is a product of three expectations, each of one~$S_1$-term, times the expectation of an $S_2$-term. 
All of the first three expectations contain a $\Delta$ and are thus bounded by $C(\lambda_3-\lambda_1)^{\kappa}$, by Lemma~\ref{lem:bounds_lower_moms}$(i)$ and $(iii)$. 
The fourth expectation is bounded by a constant by Lemma~\ref{lem:bounds_lower_moms}$(ii)$ 
(and the triangular inequality if the term from $S_2$ contains a $\Delta$). 
In total, with its prefactor, the multinomial is thus bounded by $C(\lambda_3-\lambda_1)^{2\kappa}$, as claimed.
\item If $d_1 = 2$, then the corresponding multinomial is a product of three (if $d_2 = 1$) or four (if $d_2=2$) expectations: two of these are expectations of $S_1$-terms, the other one or two are expectations of $S_2$-terms. 
We bound the expectations of $S_2$-terms by constants using Lemma~\ref{lem:bounds_lower_moms}$(ii)$ (and the triangular inequality if they contain $\Delta$'s). Among the two expectations of $S_1$-terms, one is the expectation of one term from $S_1$, and the other is the expectation of the product of either one or two terms from $S_1$. 
The first of these two expectations is bounded by $C(\lambda_2-\lambda_1)^{\kappa}$ by Lemma~\ref{lem:bounds_lower_moms}$(i)$ and $(iii)$. 
The second can be bounded by a constant using the triangular inequality and Lemma~\ref{lem:bounds_lower_moms}$(ii)$. 
Thus in total, with its prefactor, such a multinomial is bounded by $C(\lambda_2-\lambda_1)^{2\kappa}$, as claimed.
\item If $d_1 = 1$, then the corresponding multinomial is the product of the expectation of a product of one, two or three $S_1$-terms times the product of at least one expectation of the product of at most two $S_2$-terms. 
The expectations of $S_2$-terms can be bounded by constants using Lemma~\ref{lem:bounds_lower_moms}$(ii)$ (and the triangular inequality to remove the $\Delta$'s).
By Lemma~\ref{lem:bounds_lower_moms}$(i)$ and $(ii)$, the expectation of a product of one, two or three $S_1$-terms is bounded by $C(\lambda_2-\lambda_1)^{\kappa}$. Together with the prefactor, this bounds the multinomial by $C(\lambda_2-\lambda_1)^{2\kappa}$, as claimed.
\item Finally, if $d_1 = 0$, then the corresponding multinomial is a product of one, two, three or four expectations of $S_2$-terms. 
\begin{itemize}
\item If $d_2 = 1$, then the multinomial is one expectation of the product of four $S_2$-terms; among those four terms, two are from $\{\Delta W_n(\lambda_3), -\frac{\Delta t_3}{t_2}W_n(\lambda_2)\}$. In other words, the only possible multinomials are $\E(W_n(\lambda_2)^2 \Delta W_n(\lambda_3)^2)$, $-\frac{\Delta \lambda_3}{\lambda_2}\E(W_n(\lambda_2)^3 \Delta W_n(\lambda_3))$, and $\frac{(\Delta \lambda_3)^2}{\lambda_2}\E(W_n(\lambda_2)^4)$, all of which are bounded by $C(\lambda_2-\lambda_1)^{\kappa}$, by Lemma~\ref{lem:bounds_lower_moms}.
\item If $d_2 = 2$, then the multinomial is the product of two expectations of products of $S_2$-terms: these two products are either both products of two $S_2$-terms, or one of them has one term and the other three terms. In both cases, one can check that this multinomial with its prefactor can be bounded by $C(\lambda_2-\lambda_1)^{2\kappa}$.
\item If $d_2 = 3$, then the multinomial is the product of the expectation of the product of two $S_2$-terms times the product of two expectations of one $S_2$-term each. At least one these expectations contains a $\Delta$, and thus, by Lemma~\ref{lem:bounds_lower_moms}, it can be bounded by $C(\lambda_3-\lambda_1)^{\kappa}$, which, together with the prefactor, allows us to bound the monomial by  $C(\lambda_2-\lambda_1)^{2\kappa}$.
\item If $d_2 = 4$, then the multinomial is the product of four expectations of one $S_2$-term each. Two of these $S_2$-terms contain a $\Delta$ (because they are from a $T'$), and are thus bounded by $C(\lambda_3-\lambda_1)^{\kappa}$ (by Lemma~\ref{lem:bounds_lower_moms}$(i)$ and $(iii)$). The other two expectations are bounded by constants by Lemma~\ref{lem:bounds_lower_moms}$(ii)$. With the prefactor, we get $C(\lambda_2-\lambda_1)^{2\kappa}$, as claimed.
\end{itemize}
\end{itemize}

\paragraph{$T$-moments that involve no $S_3$ term and no $S_2$-terms.}
These $T$-moments only contain $S_1$-terms. 
These cases are a bit different from the previous ones because the prefactor $\Fac{x,0,0}$ is not small; however, by~{\HYPR} , it is bounded by a constant $C$ (since $\lambda_1\in[a,b]$). 
Since we want to bound every contribution (in absolute value) up to a constant, we can ignore the factorial moments here.
We look at the multinomials that come from the right-hand side of~\eqref{eq:exp_part}, and distinguish according to the value of $1\leq d_1\leq 4$ (note that $d_2 = d_3 = 0$ since the $T$-moment has no $S_2$ or $S_3$-terms).
\begin{itemize}
\item If $d_1 = 1$, then the corresponding multinomial is the expectation of the product of four $S_1$-terms. 
\begin{itemize}
\item If, among these four terms, at least two are from $\{T_2, T'_2\}$ (which contain $\Delta t_2$ and $\Delta t_3$ as factors, respectively), then, using Lemma~\ref{lem:Mom}$(ii)$ to bound the rest of the expectation by a constant, we get that this monomial is bounded by $C(\lambda_3-\lambda_1)^2$ and thus by $C(\lambda_3-\lambda_1)^{2\kappa}$ as claimed.
\item If exactly one of the four $S_1$-terms in the $T$-moment is from $\{T_2, T'_2\}$, then the multinomial is a constant times $\Delta t_3$ or $\Delta t_2$ times an expectation of the form
$\E(\Delta W_n(\lambda_2)^{k_1}\Delta W_n(\lambda_2)^{k_2})$, with $k_1 + k_2 = 3$. By Lemma~\ref{lem:Mom}$(iii)$ this last expectation is bounded by $C(\lambda_3-\lambda_1)^{\kappa}$, which, together with the $\Delta t_3$ or $\Delta t_2$ term gives $C(\lambda_3-\lambda_1)^{2\kappa}$ as claimed.
\item Finally, if none of the four terms in the $T$-moment are $T_2$ or $T'_2$, then the multinomial is $\Fac{1,0,0} \E(\Delta W_n(\lambda_2)^2\Delta W_n(\lambda_3)^2)$, which gives the first term in~\eqref{eq:final_rec}.
\end{itemize}
\item If $d_1\geq 2$, then the corresponding multinomial equals its prefactor times a product of at least two expectations of a product of one, two or three $S_1$-terms. By Lemma~\ref{lem:Mom}$(i)$ and $(iii)$, each of these two expectations is bounded by $C(\lambda_3-\lambda_1)^{\kappa}$ and thus their product is bounded by $C(\lambda_3-\lambda_1)^{2\kappa}$, as claimed.
\end{itemize}
This concludes the proof.

\subsubsection{Proof of Lemma \ref{lem:bounds_lower_moms}\it{(\ref{it:a1})}}\label{sub:proof_key_lemma1}

The result is immediate when $k_1+k_2+k_3=1$ or  $k_1+k_2+k_3=0$
since in this case $\ov{M}_{k_1,k_2,k_3}=1$. 
We reason by induction and assume that $\ov{M}_{k_1,k_2,k_3}<+\infty$ for all non-negative $k_1, k_2$ and $k_3$ such that $k_1+k_2+k_3\leq m$, where $m$ is some fixed integer in $\{1,2,3\}$. 
Take a triplet $(k_1, k_2, k_3)$ such that $k_1+k_2+k_3=m+1$; 
we only need to prove that $\ov{M}_{k_1,k_2,k_3}<+\infty$.
We set $M^{(n)}_{k_1,k_2,k_3}:=\E\l(W_n(\lambda_1)^{k_1}W_n(\lambda_2)^{k_2}W_n(\lambda_3)^{k_3}\r)$;
by \eref{eq:rel}, for all $n\geq 0$,
\begin{multline*}
M^{(n+1)}_{k_1,k_2,k_3}=\E\l[\prod_{\ell=1}^3 \l(\sum_{j=1}^{\ell} \frac{\S{j}{W_n(\lambda_\ell)}}{\lambda_\ell}\r)^{k_\ell}\r]\\
= \frac{1}{\prod_{\ell=1}^3 \lambda_\ell^{k_\ell}} \sum_{{k_{1,2}+k_{2,2}=k_2\atop k_{1,3}+k_{2,3}+k_{3,3}=k_3}}  \binom{k_1}{k_{1}}\binom{k_2}{k_{1,2},k_{2,2}}\binom{k_3}{k_{1,3},k_{2,3},k_{3,3}}\\
\nonumber \times\E\l[ \S1{W_n(\lambda_1)}^{k_1} \S1{W_n(\lambda_2)}^{k_{1,2}}\S2{W_n(\lambda_2)}^{k_{2,2}}\S1{W_n(\lambda_3)}^{k_{1,3}}\S2{W_n(\lambda_3)}^{k_{2,3}}\S3{W_n(\lambda_3)}^{k_{3,3}}\r].
\end{multline*}
We use \Cref{lem:Mom} to expand this expectation into a sum of multinomials.
Note that there are $k_1+k_{1,2}+k_{1,3}$ $S_1$-terms, $k_{2,2}+k_{2,3}$ $S_2$-terms and $k_{3,3}$ $S_3$-terms. Thus, the monomials appearing in the right-hand side of~\eqref{eq:exp_part} are at most of degree $m+1$.
The monomials with degree at most $m$ are uniformly bounded by the recurrence hypothesis. 
After expansion, we have a sum of multinomials (products of monomials) with total degree $m+1$. Bounding the monomials with degree at most~$m$ by a constant leaves us with a constant $c_0$ plus the contribution of monomials with degree~$m+1$.
Since $1/{\prod_{\ell=1}^3 \lambda_\ell^{k_\ell}}$ is also bounded on $[a,b]$ 
we have for $c= \max\{c_0/ \prod_{\ell=1}^3 \lambda_\ell^{k_\ell}, \lambda[3]\in[a,b]\}$,
\begin{multline*}
M^{(n+1)}_{k_1,k_2,k_3}\leq  c \\+\frac{\Fac{1,0,0} \E\l(\prod_{j=1}^3 W_n(\lambda_j)^{k_j}\r) +\1_{k_1=0}\Fac{0,1,0} \E\l(\prod_{j=2}^3 W_n(\lambda_j)^{k_j}\r)+\1_{k_1=k_2=0} \Fac{0,0,1} \E\l(W_n(\lambda_3)^{k_3}\r)}{\prod_{\ell=1}^3 \lambda_\ell^{k_\ell}}
\end{multline*}
and the reason for this is that the only terms with maximal degree comes from the cases where there are only terms of type $S_1$ or $S_2$ or $S_3$ (and for the second and third case, this can happen only if $k_1=0$ and $k_1=k_2=0$ respectively). Since $\Fac{1,0,0}=\lambda_1,\Fac{0,1,0}=\Delta \lambda_2,\Fac{0,0,1}= \Delta \lambda_3$, this gives 
\be
M^{(n+1)}_{k_1,k_2,k_3}&\leq & c +M^{(n)}_{k_1,k_2,k_3}\frac{\lambda_1 +\1_{k_1=0} \Delta \lambda_2+\1_{k_1=k_2=0} \Delta \lambda_3 }{\prod_{\ell=1}^3 \lambda_\ell^{k_\ell}}
\ee
Since $k_1+k_2+k_3\geq 2$, the factor of $M^{(n)}_{k_1,k_2,k_3}$ is uniformly bounded by $\leq 1/a$.
Now, to conclude, we use Lemma \ref{lem:rec} with $U_0=1$, $A=1/a$, $B=c$.

\subsubsection{Proof of Lemma \ref{lem:bounds_lower_moms}\it{(\ref{it:a4})}}
\label{sub:proof_key_lemma2}

First note that it is enough to prove the claim when $k_1+k_2=1$. 
Indeed, the case $k_1+k_2\geq 2$ can be reduced to the $k_1+k_2=1$ case by expanding $k_1+k_2-1$ factor of the type $\Delta W_n(\lambda_j)$ using the triangular inequality. 
For example
\[ |\E( (\Delta W_n(\lambda_2))W_n(\lambda_3)^2\Delta W_n(\lambda_3))|\leq|\E(  W_n(\lambda_1)W_n(\lambda_3)^2\Delta W_n(\lambda_3))|+|\E( W_n(\lambda_2)W_n(\lambda_3)^2\Delta W_n(\lambda_3))| \] 
is bounded from above by $2C (\lambda_3-\lambda_1)^{\kappa}$ 
if each term in the right-hand side is bounded by $C(\lambda_3-\lambda_1)^{\kappa}$.
If $k_1+k_2 = 1$, then either $k_1 = 1$ or $k_2=1$, and we need to treat these two cases separately.
We set 
\be
A_n &=& \E\l(\Delta W_n(\lambda_2) \prod_{i=1}^{D} W_n(\lambda_{m_i})\r)\\
B_n &=&  \E\l(\Delta W_n(\lambda_3) \prod_{i=1}^{D} W_n(\lambda_{m_i})\r)
\ee
where $1\leq D\leq 3$ (even if the method that follows work for larger $D$ when the corresponding moments exist), and the $m_i$ are, as usual, elements of $\{1,2,3\}$.
\paragraph{Control of $A_{n+1}$.} We want to prove that for any choice of $0\leq D\leq 3$, any choices of $(m_i)$, there exists a constant $C=C_{D,(m_i)}$ such that the corresponding sequence $(A_n)$ satisfies
\ben\label{eq:conclA}
\sup_{n\geq 0} |A_n|\leq C(\lambda_3-\lambda_1)^{\kappa} \textrm{ for all }a\leq \lambda_1\leq \lambda_2\leq \lambda_3\leq b.\een
We give a proof by recurrence on the value of $D$: if $D=0$ then $A_n=0$ so that \eref{eq:conclA} holds for $C=0$. Let us assume that we showed that $\sup_n|A_n|\leq C_{D',(m_i)}(\lambda_3-\lambda_1)^{\kappa}$ for all choices of $(D',(m_i))$ with $D'\leq D-1$ for some $D\in\{1,2,3\}$, 
and aim at proving the result for any $(D,(m_i))$. 
Fix such a pair $(D,(m_i))$.

We have by \eref{eq:theTi} and \eref{eq:rel}
\[A_{n+1}=\frac{1}{\lambda_2}\E\l[ \l(\S{1}{\Delta W_n(\lambda_2)- \frac{\Delta \lambda_2}{\lambda_1}W_n(\lambda_1)}+\S{2}{W_n(\lambda_2)}\r)\prod_{i=1}^D \sum_{\ell=1}^{m_i} \frac{\S{\ell}{W_n(\lambda_{m_i})}}{\lambda_{m_i}}\r].\]
We now use the linearity of $S_1$ and of the expectation and see $A_{n+1}$ as the sum of three expectations $A^{(1)}_{n+1}$, $A^{(2)}_{n+1}$ and $A_{n+1}^{(3)}$ that can be written as in the left-hand side of~\eqref{eq:exp_part}. 
We thus apply Lemma \ref{lem:Mom} to each of these three expectations, and get, from the right-hand side of~\eqref{eq:exp_part}, a sum of multinomials.
Recall that a multinomial is a prefactor $\Fac{}$ times a product of monomials.
The maximum degree of a monomial in the expansion of $A_{n+1}$ is $D+1$;
such monomials form a multinomial with their prefactor 
(i.e.\ they are not multiplied by another monomial).
In the expansion of $A^{(1)}_{n+1}$, the only monomial of degree $D+1$ comes from the partition that leaves all $S_1$-terms in one part. The same is true for $A^{(2)}_{n+1}$.
In $A^{(3)}_{n+1}$, we only get a multinomial of degree $D+1$ if $m_i\geq 2$ for all $1\leq i\leq D$, and it comes from the partition that leaves all $S_2$-terms in the same part. 
Thus, the only multinomials involving a monomial of degree $D+1$ are
\be
M_1 &=
& \frac{\Fac{1,0,0}}{\lambda_2\prod_{i=1}^D \lambda_{m_i}} \E\l(\Delta W_n(\lambda_2)\prod_{i=1}^D W_n(\lambda_{m_i})\r)
= \frac{\Fac{1,0,0}}{\lambda_2\prod_{i=1}^D \lambda_{m_i}} A_n,\\
M_2&=& -\frac{\Fac{1,0,0}}{\lambda_2\prod_{i=1}^D \lambda_{m_i}} \E\l(\l( \frac{\Delta \lambda_2}{\lambda_1}W_n(\lambda_1)\r) \prod_{i=1}^D W_n(\lambda_{m_i})\r),\\
M_3&=&\frac{\Fac{0,1,0}}{\lambda_2\prod_{i=1}^D \lambda_{m_i}} \E\l(\l(\Delta W_n(\lambda_2)- \frac{\Delta \lambda_2}{\lambda_1}W_n(\lambda_1)\r) \prod_{i=1}^D W_n(\lambda_{m_i})\r)\1_{m_i\geq 2, \forall 1\leq i \leq D}\\
   &=& \frac{\Fac{0,1,0}A_{n}}{\lambda_2\prod_{i=1}^D \lambda_{m_i}}\1_{m_i\geq 2, \forall 1\leq i \leq D}-\frac{\Fac{0,1,0}}{\lambda_2\prod_{i=1}^D \lambda_{m_i}} \E\l( \frac{\Delta \lambda_2}{\lambda_1}W_n(\lambda_1) \prod_{i=1}^D W_n(\lambda_{m_i})\r)\1_{m_i\geq 2, \forall 1\leq i \leq D}.
\ee 
Note that $|M_2|\leq C\Delta \lambda_2\leq C(\lambda_3-\lambda_1)^{\kappa}$ because of Lemma~\ref{lem:bounds_lower_moms}$(ii)$.
Hence, the total contribution of the monomial $A_n$ in $M_1$ and $M_3$ (and thus in the expansion of $A_{n+1}$) is
\[ \frac{\Fac{1,0,0}+\Fac{0,1,0}\1_{m_i\geq 2, \forall 1\leq i \leq D}}{\lambda_2\prod_{i=1}^D \lambda_{m_i}} \times A_n\]
All the other multinomials appearing in the expansion of $A_{n+1}$ (included the second term of $M_3$) satisfy one of the following alternatives:
\begin{itemize} 
\item Its prefactor is $\Fac{x,y,z}$ with $y\geq 1$ or $z\geq 1$ (meaning that, in the right-hand side of~\eqref{eq:exp_part}, it comes from a triplet $(d_1, d_2, d_3)$ such that $d_2\geq 1$ or $d_3\geq 1$). By \eref{eq:dsdity} for these values of $(x,y,z)$,$\Fac{x,y,z}\leq C(\lambda_3-\lambda_1)^{\kappa}$. 
Furthermore, all the monomials appearing in this multinomial can be bounded by constants by Lemma~\ref{lem:bounds_lower_moms}$(ii)$.
\item Its prefactor is $\Fac{x,0,0}$ (and $x\neq 1$ since this gives $M_1$): 
in this case, either $\Delta W_n(\lambda_2)$ appears in a monomial of degree at most $D-1$, 
or $\frac{(\Delta \lambda_2)}{\lambda_1} W_n(\lambda_1)$ appears in a monomial of degree at least one~$1$. 
Applying the induction hypothesis in the first case, and~Lemma~\ref{lem:bounds_lower_moms}$(i)$ in the second case (and Lemma~\ref{lem:bounds_lower_moms}$(ii)$ in both case to bound the other monomials involved in the multinomial), we get that, in absolute value, this multinomial is bounded by $C(\lambda_3-\lambda_1)^{\kappa}$.
\end{itemize}
We thus get that
\ben\label{rec:rec1}
A_{n+1}=\frac{\Fac{1,0,0}+\Fac{0,1,0}\1_{m_i\geq 2, \forall 1\leq i \leq D}}{\lambda_2\prod_{i=1}^D \lambda_{m_i}} A_n + \sum {\sf multinomials}\een
which gives by the triangular inequality
\ben\label{rec:rec2}
|A_{n+1}|\leq |A_n|/a + \sum |{\sf multinomials}|\een
and all multinomials in the sum are bounded in absolute value by $C(\lambda_3-\lambda_1)^{\kappa}$ for some $C>0$ (which can depend on the multinomial, but since there are finitely many of them, we can take the maximum constant for~$C$). 
The bound by $|A_n|/a$ comes from $|D|\geq 1$ and $\Fac{1,0,0}=\lambda_1$, $\Fac{0,1,0}=\lambda_2-\lambda_1$, and $a\leq \lambda_1 \leq \lambda_2\leq \lambda_3\leq b$.
Since $A_0=0$, we conclude by Lemma~\ref{lem:rec} that $|A_{n}|\leq C'(\lambda_3-\lambda_1)^{\kappa}$ for all $a\leq \lambda_1\leq \lambda_2\leq \lambda_3\leq b$.

\paragraph{Control of $B_{n+1}$.}  We apply the same strategy as for $A_{n+1}$: we reason by recurrence over $D$. Again the case $D=0$ is trivial since $B_n=0$ in this case. 
After that the formula are a bit more involved; 
let us have a glimpse on the differences with the $A_n$ case. 
We group a bit the $T'_i$ defined in \eref{eq:theTi} and write
\begin{multline*}B_{n+1}=\frac{1}{\lambda_3}\E\left[ \l(\S{1}{\Delta W_n(\lambda_3)- \frac{\Delta \lambda_3}{\lambda_2}W_n(\lambda_2)}+\S{2}{\Delta W_n(\lambda_3)- \frac{\Delta \lambda_3}{\lambda_2}W_n(\lambda_2)}+\S{3}{W_n(\lambda_3)}\r)\right.\\
  \times \prod_{i=1}^D \sum_{\ell=1}^{m_i} \frac{\S{\ell}{W_n(\lambda_{m_i})}}{\lambda_{m_i}}\left].{\phantom{\sum}}\right.
\end{multline*}
Again, notice the presence of $\Delta W_n(t_3)$ in a $S_1$ and a $S_2$ terms, 
while the $S_3$ terms concerns $W_n(\lambda_3)$. 
When one expands everything, and pack together the only terms -- those of maximum degree-- that contain $B_n$ as a factor, 
we observe that they can be produced only by $S_1$-terms, 
and possibly $S_2$-terms if all the $m_i\geq 2$.
We then get, for $M_1$, the contribution of these $B_n$ terms
\be
M_1&=&\l[\frac{\Fac{1,0,0}}{\lambda_3\prod_{i=1}^D \lambda_{m_i}}+\frac{\Fac{0,1,0}}{\lambda_3\prod_{i=1}^D \lambda_{m_i}} \1_{m_i\geq 2, 1\leq i \leq D}\r] \E\l(\l(\Delta W_n(\lambda_3)\r) \prod_{i=1}^D W_n(\lambda_{m_i})\r)
        \\
         &=& \l(\frac{\lambda_1}{\lambda_3\prod_{i=1}^D \lambda_{m_i}}+\frac{\Delta \lambda_2}{\lambda_3\prod_{i=1}^D \lambda_{m_i}}\1_{m_i\geq 2, 1\leq i \leq D}  \r) B_n.\ee
The rest of the terms coming from the expansion of $B_{n+1}$ involves either $\Delta W_n(\lambda_3)$, or $\Delta \lambda_3$, and the possible terms avoiding this contains a $S_3$ terms so that it comes with a prefactor $\Fac{x,y,z}$ with $z\geq 1$). This allows to write some equations similar to \eref{rec:rec1} and \eref{rec:rec2}:
\ben\label{eq:rehte}
B_{n+1}=\frac{\Fac{1,0,0}+\Fac{0,1,0}\1_{m_i\geq 2, \forall 1\leq i \leq D}}{\lambda_3\prod_{i=1}^D \lambda_{m_i}} B_n + \sum {\sf multinomials}\een
from which we conclude for the same reasons as in the $A_n$ case.

\section{Exact computations of the moments of $W_n$ and of $W$}
\label{sec:ECM}
In this short section, we would like to discuss the fact that 
the moments of $W_n$ and of $W$ can be computed (when they exist), 
and a closed formula for them can be derived. 
However, the formulae we obtain are so complicated that, despite important efforts, 
we were not able to find a way to present them in the paper: 
 some matrices with large size and with involved coefficients enter into play in the formula expressing the moments $\E(\Delta W_n(\lambda_2)^2\Delta W_n(\lambda_3)^2)$ in terms of the moments of $(X(\lambda_1), X(\lambda_2), X(\lambda_3))$. 
The obtained formulas are exact but we were unable to extract from them a simple criterion for the tightness.

We sketch the method allowing to get these close formulae: 
in principle, they can be used to treat some cases that are not covered by our Theorem~\ref{thm:Geninf} 
({\HYPM} was derived working with inequalities, 
and it probably does not cover all the cases for which $\E((\Delta W_n(\lambda_2))^2(\Delta W_n(\lambda_3))^2)\leq C(\lambda_3-\lambda_1)^{2\kappa}$). 
We focus on the 3-dimensional moments, but the same method applies for any higher-dimensional moments.

We just sketch the ideas:

{\bf (I) A non-linear recursion formula:} Using~\eqref{eq:rel}, we write 
\[M_{n+1}(k_1,k_2,k_3)=\E\bigg(\prod_{j=1}^3 W_{n+1}(\lambda_j)^{k_j}\bigg)
=\E\Bigg(\prod_{j=1}^3 \bigg(\sum_{\ell=1}^j \frac{\S{\ell}{W_n(\lambda_j)}}{\lambda_j}\bigg)^{k_j}\Bigg).\] By Lemma \ref{lem:Mom}, $M_{n+1}(k_1,k_2,k_3)$ can thus be written as $\l(\prod_{j=1}^3 \lambda_j^{-k_j}\r)\Fac{1,0,0}\times M_n(k_1,k_2,k_3)$ plus a sum of products of monomials of some $M_n(d_1,d_2,d_3)$ with $(d_1,d_2,d_3) < (k_1,k_2,k_3)$ (the inequality between vectors means non-strict inequality coordinate by coordinate and strictly smaller on at least one entry).

We thus get a recursive equation that gives $M_{n+1}(k_1, k_2, k_3)$ in terms of $M_n(k_1, k_2, k_3)$ and of lower order moments $M_n(d_1,d_2,d_3)$. This means that, in principle, one can calculate $M_n(k_1, k_2, k_3)$ recursively, for arbitrary $n$ and $(k_1, k_2, k_3)$. 
Unfortunately, the recursion formula is not linear in the lower order moments, which makes this computation more complex.

{\bf (II) Conservation of degrees:} When one uses Lemma~\ref{lem:Mom} to expand $M_{n+1}(k_1,k_2,k_3)$, the total degree in each multinomial on the right-hand side is $k_1+k_2+k_3$. 
Similarly, if one uses Lemma~\ref{lem:Mom} to expand, e.g., $M_{n+1}(d_1,d_2,d_3)\times M_{n+1}(d'_1,d'_2,d'_3)$ (applying Lemma~\ref{lem:Mom} to both terms), then, after expansion, the total degree of each multinomial appearing in the expansion is $d_1+d_2+d_3+d'_1+d'_2+d'_3$. In other words, the total degree of a multinomial is left unchanged by applying Lemma \ref{lem:Mom} to all its monomials. (This is true in all generality, even when multiplying more than two monomials.)

{\bf (III) Linearising the recursion formula:} 
A consequence of (I) and (II) is that it is possible to linearise 
the induction formula of (I). 
The idea is that, although the formula for $M_{n+1}(d_1, d_2, d_3)$ does not belong to the set
of linear combinations of the $M_n(d_1', d_2', d_3')$, with $(d'_1,d'_2,d'_3)\leq (d_1,d_2,d_3)$,
it belongs to the set of linear combinations of their products.
Furthermore, for a fixed value of $d_1+d_2+d_3$, there are finitely many of these products.
We thus take all these possible products (i.e.\ all monomial or product of monomials with total degree $d_1+d_2+d_3$) as a basis for this linear representation. 
(In fact, we can just sequentially add the products into the basis while running the computation to construct the smallest vector space that contains all necessary moments, and that is, somehow, stable by our rewriting rules.)

\medskip
Taking into account that $\E(W_n(\lambda_\ell))=1$ simplifies a bit the formulas:
some products of monomials of total degree~4 can be simplified. 
For example, $\E(W_n(\lambda_1)^3)\E(W_n(\lambda_1))=\E(W_n(\lambda_1)^3)$.

Applying (I-III) when calculating~$M_{n+1}(0, 2, 2)=\E((\Delta W_n(\lambda_2))^2(\Delta W_n(\lambda_3))^2)$,
we can write this monomial as a linear combination of products of monomials of total degree 4. Because of the simplifications due to $\E(W_n(\lambda_\ell))=1$, we sometimes see products of smaller total degree.
For example, some of the products appearing when writing $M_{n+1}(0, 2, 2)$ in term of $M_n(0,2,2)$ are, among others $M_n(1,3,0)$, $M_n(0,3,0)$, $M_{n}(1,1,0)M_n(0,1,1)$, and $M_n(0,2,0)^2$.
We give a name $P_n^{(i)}$ to each of the product of monomials that arises in this sum:
for example, set $P_n^{(1)}=M_n(1,2,1)$, $P_n^{(2)}=M_n(0,2,0)$, $ P_n^{(3)}=M_n(0,2,0)^2$, etc (we ignore the algebraic relation that can link these products of moments). 
In the end, one can construct a basis of $41$ of these products of monomials that allows to linearise the recursion of $M_n(0,2,2)$ as in (III). 
If one defines $V_n$ as the vector whose coordinates are the $P_n^{(j)}$, we eventually get that
\[V_n=AV_{n-1}+U,\]
for an explicit matrix~$A$ (whose coefficients are functions of the~$\Fac{d_1,d_2,d_3}$'s)
and a vector $U$ whose coordinates are the $P_0^{(j)}$.

The $41\times 41$ matrix $A$ can be diagonalised (in fact, up to relabelling the $P_n^{(i)}$'s, it is triangular). 
This provides some explicit formulae for all $P_n^{(i)}$'s by the standard mean of linear algebra.
These formulae are explicit but giant! several pages in standard A4 format are needed to write down their expression: at the end, of course, all moments of interests can be expressed in terms of the moments of $(X(\lambda_1),X(\lambda_2),X(\lambda_3))$.

The limiting moments~$P$ can also be computed: they are solution of
\[P=AP+U,\]
and since $A$ is diagonalisable, they can be exactly computed,
although again, the formula obtained doing this is huge and hard to manipulate.

\section{Remaining proofs}
\label{sec:RP}

\subsection{Proof of Lemma~\ref{lem:easy_HYPM}}\label{sec:proof_lem_simpl_cond}
Since $X$ is almost surely non-decreasing and integer-valued, one has
  \ben\label{eq:incr} X(\lambda_1)\leq X(\lambda_2)\leq X(\lambda_3).\een
$\bullet$  We start by proving (\eref{eq:dsdity} {\bf and }\eref{eq:dsdity2}) $\imp$  (\eref{eq:easy1}  {\bf and }\eref{eq:easy2}). 
If \eref{eq:dsdity2} holds, then 
\[
\E\l[(\Delta X(\lambda_2))^2(\Delta X(\lambda_3))^2\r]=
\sum_{y=1}^2\sum_{z=1}^2  \Fac{0,y,z}(\lambda_1,\lambda_2,\lambda_3)\leq 4(\lambda_3-\lambda_1)^{2\kappa}.\] So that \eref{eq:easy1} holds for $C'=4C$. Now, to prove that \eref{eq:easy2} holds, it suffices to express $\E(( \Delta X(\lambda_3)) (1+X_3^3))$ in terms of the factorial moments appearing in \eref{eq:dsdity} {\bf and }\eref{eq:dsdity2}, which is possible: 
\ben\label{eq:tdsfehze}
\E(( \Delta X(\lambda_3)) X(\lambda_3)^3)&=&
 \Fac{0, 0, 4} + \Fac{0, 0, 1}  + \Fac{3, 0, 1}+ \Fac{0, 3, 1}+ 7(\Fac{1, 0, 1}  + \Fac{0, 0, 2} + \Fac{0, 1, 1})\notag\\
 && + 12(\Fac{1, 1, 1} + \Fac{0, 1, 2}   + \Fac{1, 0, 2})+ 6(\Fac{1, 1, 2}  + \Fac{0, 0, 3} + \Fac{0, 2, 1}+ \Fac{2, 0, 1})\notag\\
 &&+ 3(\Fac{0, 1, 3} +\Fac{2, 1, 1}  + \Fac{1, 2, 1}  + \Fac{2, 0, 2} + \Fac{0, 2, 2}+ \Fac{1, 0, 3})
 \een
 {(This formula can be checked by hand; it follows from the fact that one can write $x_3^3(x_3-x_2)$ on the basis formed by $\prod_{i=1}^3 \prod_{j=0}^{n_i}(x_i-j)$, and it can be computed automatically, using a computer algebra system).}
 
$\bullet$ We now prove that  (\eref{eq:easy1}  {\bf and }\eref{eq:easy2}) $\imp$ (\eref{eq:dsdity} {\bf and }\eref{eq:dsdity2}). First, since $Y$ is integer-valued, we have $\E(Y^2)\geq \E(Y)$ and $\E(Y^2)\geq \E(Y(Y-1))$, and thus \eref{eq:easy1} implies \eref{eq:dsdity2} straightforwardly. 
Moreover, by~\eref{eq:incr},
\ben\label{eq:rhyrk}
C'(\lambda_3-\lambda_2)^{\kappa}\geq \E((\Delta X(\lambda_3)) X_3^3 \geq \E\l[(X_1^{j_1}X_2^{j_2}X_3^{j_3})\Delta X(\lambda_3)\r]\een for all $j_1,j_2,j_3$ such that $0\leq j_1+j_2+j_3\leq 3$ (note that when $j_1+j_2+j_3=0$, the right-hand side is zero).
Each element $\Fac{x,y,z}(\lambda_1,\lambda_2,\lambda_3)$ with $z\geq 1$ appearing in \eref{eq:dsdity} can be expanded as a sum of terms of the form $\E[(X_1^{j_1}X_2^{j_2}X_3^{j_3})\Delta X(\lambda_3)]$ (we write each $\Delta X(\lambda_3)$ and $\Delta X(\lambda_2)$ except one $\Delta X(\lambda_3)$ as a difference and then use of distributivity to expand). 
Therefore, \eref{eq:rhyrk} implies that $\Fac{x,y,z}(\lambda_1,\lambda_2,\lambda_3)\leq C(\lambda_3-\lambda_2)^\kappa$ for all $z\geq 1$. 
It only remains to treat the case $z=0$; 
in this case, we apply~\eref{eq:easy2} to $(\lambda_1, \lambda_2)$ instead of $(\lambda_2, \lambda_3)$ (this is allowed because $\lambda_3$ and $\lambda_2$ in~\eref{eq:easy2} are just any numbers satisfying $a\leq \lambda_2\leq \lambda_3\leq b$).
This gives $\E(( \Delta X(\lambda_2)) X(\lambda_2)^3)\leq C'(\lambda_2-\lambda_1)^\kappa$. 
From here, one can use the same arguments as in the case $z\geq 1$, to prove that~\eref{eq:dsdity} holds when $z=0$ and $y\geq 1$.

\subsection{Proof of \Cref{lem:unicite}}
Recall that the $L^2$ Wasserstein metric is defined as follows: 
for any two probability distributions $\mu$ and $\nu$ in $\mathcal M_2(1, \ldots, 1)$,
\[d_W(\mu, \nu)
= \inf\big\{\mathbb E\big[\|(U_1, \ldots, U_m)-(\hat U_1, \ldots, \hat U_m)\|_2^2\big]^{\nicefrac12} 
\colon (U_1,\ldots, U_m)\sim \mu, (\hat U_1, \ldots, \hat U_m)\sim \nu\big\}.\]
Note that if $\mathbb E[(U_1, \ldots, U_m)]= \mathbb E[(\hat U_1, \ldots, \hat U_m)]$, then
\[\mathbb E\big[\|(U_1, \ldots, U_m)-(\hat U_1, \ldots, \hat U_m)\|_2^2\big]
= \sum_{k=1}^m \mathrm{Var}(U_k-\hat U_k).
\]
Thus, for all $\mu, \nu \in \mathcal M_2(1, \ldots, 1)$, we have
\begin{equation}\label{eq:contr}
d_W\big(\Psi(\mu), \Psi(\nu)\big)^2
\leq\sum_{k=1}^m \mathrm{Var}\left(\frac1{\lambda_k}\sum_{i=1}^{X(\lambda_k)} (U_i^{\sss (i)}-\hat U_i^{\sss (i)})\right)
\end{equation}
for all $(U_1,\ldots, U_m)\sim \mu$ and $(\hat U_1, \ldots,  \hat U_m)\sim \nu$, 
where $((U_1^{\sss (i)}, \ldots, U_m^{\sss (i)}),(\hat U_1^{\sss (i)}, \ldots, \hat U_m^{\sss (i)}))_{i\geq 1}$ are sequences of i.i.d.\ copies of $((U_1, \ldots, U_m), (\hat U_1, \ldots, \hat U_m))$, independent from the offspring process $X$.
Using the law of total variance, we get that, for all $1\leq k\leq m$,
\begin{multline*}
\mathrm{Var}\left(\frac1{\lambda_k}\sum_{i=1}^{X(\lambda_k)} (U^{\sss (i)}_k-\hat U^{\sss (i)}_k)\right)\\
= \mathbb E\mathrm{Var}\left(\frac1{\lambda_k}\sum_{i=1}^{X(\lambda_k)} (U^{\sss (i)}_k-\hat U_k^{\sss (i)})\bigg| X(\lambda_k)\right)
+\mathrm{Var}\mathbb E\left[\frac1{\lambda_k}\sum_{i=1}^{X(\lambda_k)} (U^{\sss (i)}_k-\hat U^{\sss (i)}_k)\bigg| X(\lambda_k)\right]\\
= \frac1{\lambda_k^2}\mathbb E\big[X(\lambda_k)\mathrm{Var}(U_k-\hat U_k)\big]
= \frac{\mathrm{Var}(U_k-\hat U_k)}{\lambda_k}, \hspace{6.7cm}
\end{multline*}
where we have used again that $\mathbb E[U_k] = \mathbb E[\hat U_k]$,
and that $\mathbb EX(t) = t$ for all $t>1$.
Since the second term in Equation~\eqref{eq:contr} can be treated similarly, we get
\[d_W\big(\Psi(\mu), \Psi(\nu)\big)^2
\leq \sum_{k=1}^m \frac{\mathrm{Var}(U_k-\hat U_k)}{\lambda_k} 
\leq \frac{\mathbb E[\|(U_1, \ldots, U_m)-(\hat U_1, \ldots,\hat U_m)\|_2^2]}{\lambda_1}.
\]
Since this is true for all $(U_1, \ldots,U_m)\sim \mu$ and $(\hat U_1, \ldots, \hat U_m)\sim \nu$, taking the infimum gives
\[d_W\big(\Psi(\mu), \Psi(\nu)\big)\leq \frac1{\lambda_1} d_W\big(\mu,\nu\big),\]
which concludes the proof since $\lambda_1>1$.
 
\subsection{Proof of \Cref{pro:qdsqhsd}}

{As already mentioned, the subtrees of the root are themselves independent GW trees, and this leads us, notably to \eref{eq:grfdqdrz}, which says that $W_{n+1}(\lambda_i) = \frac1{\lambda_i}\sum_{i=1}^{X(\lambda_i)} W_n^{(i)}(\lambda_i),$ jointly for $1\leq i \leq d$. }  
Hence,
\be
W_n(\lambda_m)&=&\sum_{k=1}^m \sum_{j=1}^{\Delta X(\lambda_k)} \frac{W_{n-1}^{(j,k)}(\lambda_m)}{\lambda_m},\\
\Longrightarrow\sum_{m=1}^d x_m W_n(\lambda_m)
&=& \sum_{k=1}^d \sum_{j=1}^{\Delta X(\lambda_k)} \l[ \sum_{m=k}^d   x_m  \frac{W_{n-1}^{(j,k)}(\lambda_m)}{\lambda_m}\r]
\ee
The $(W_n^{(j,k)}(\lambda_m), 1\leq m \leq d)$ are independent and this is true also, conditionally on the $\Delta X(\lambda_\ell)$.
Hence taking in this last formula the operator $ \E(\exp({\rm i}~\cdot))$, 
the conclusion follows, as usual, by conditioning first by the values of $(\Delta X(\lambda_i), 1\leq i \leq d)$.\\

For the second statement, by \Cref{prp:qsdqs}, we know that the FDD of $W_n$ converges, so that $\Phi^{(n)}_{\lambda\cro{1,d}}$ converges simply, as $n \to+\infty$ to the Fourier transform $\Phi_{\lambda\cro{1,d}}$ of a $d$ dimensional distribution. Now, to conclude, it suffices to observe that  $x\cro{1,d}\mapsto \f^{\Delta}_{\lambda\cro{1,d}}(x\cro{1,d})$ is continuous on $\overline{B(0,1)^n}$ which contains the image set of the $\Phi^{(n)}_{\lambda\cro{1,d}}$.

\subsection{Proof of \Cref{lem:grfdq}}\label{sec:proof_cvproba}

The following proof is original even if we suspect it may exist elsewhere in the literature.

First, for $\Xi$ a $D[a,b]$ process, denote by ${\sf DP}(\Xi)$ the set of discontinuity points of $\Xi$, that is $t\in {\sf DP}(\Xi)$ if $\Xi(t)\neq \Xi(t^{-})$.
According to Billingsley \cite[p138]{billin}, $\P(t\in {\sf DP}(\Xi))>0$ is possible for at most countably many $t$. \par
As a consequence there exists a deterministic countable dense set $S$, such that  the set of continuity point of $\Xi$ contains $S$ almost surely. 

Under the hypothesis of the lemma, 
the statement holds in distribution (by Billingsley \cite[Section 12]{billin}): 
the sequence of processes $(T_n)$ converges in distribution in $D([a,b])$, 
and the FDD of the limit process $T'$ at its continuity points are determined, on a dense subset of it, 
by taking the limit of the FDD of $T_n$. 
To prove convergence in probability, we need more.

From the hypothesis, $T_n$ converges to $T$ on a dense subset of $[a,b]$.
 We want to prove that $\P( d(T_n,T')\geq \varepsilon)\sous{\longto}{n\to+\infty}0$ for any fixed $\varepsilon >0$, 
  where $T'$ is the c\`adl\`ag modification of $T$,
  \[d(f,g)=\inf_{\varpi} \max\{\|\varpi-{\sf Id}\|_{\infty}, \|f-g\circ \varpi\|_{\infty}\}\]
  where the infimum is taken on the set of strictly increasing and continuous functions $\varpi$ such that $\varpi(0)=0$ and $\varpi(1)=1$, and ${\sf Id}(y)=y$ on $[0,1]$. 
  Since the sequence $(T_n,n\geq 0)$ is tight in $D[a,b]$,
  for each $\varepsilon>0$,
\ben\label{eq:qsdqsd}
\lim_{\delta \to 0^+}\limsup_n \P(w'(\delta,T_n) \geq \varepsilon)=0
\een
where for a function $f:[a,b]\to \R$,
\ben \label{eq:wp}
w'(\delta,f)=\inf_{(\lambda_i)} \max_{i} w([\lambda_{i-1},\lambda_i),f)\een
and $w([c,d),f)=\sup\{ |f(x)-f(y)|, c\leq x,y <d\}$. The infimum in \eref{eq:wp} is taken on the set of lists $(\lambda_0,\cdots,\lambda_v)$ where $v$ is an integer, and the list satisfies: $\lambda_0=a, \lambda_v=b,$ and for each $i\in\{0,\cdots,v-1\}$, $\lambda_{i+1}-\lambda_i> \delta$ (this is called a $\delta$-sparse sequence).

The intervals $[\lambda_{i-1},\lambda_i)$ defined by the $(\lambda_i)$ will be called $(\lambda_i)$-intervals.

Choose a small $\varepsilon>0$ and then, a $\delta>0$ small enough, and $N_1$ large enough such that for any $n\geq N_1$
\ben\label{eq:dqdyq}
\P(w'(\delta,T_n) \geq \varepsilon) <\varepsilon ~~\textrm{ and }~~\P(w'(\delta,T) \geq \varepsilon) <\varepsilon.\een
This is possible by \eref{eq:qsdqsd}, and since $T$ is in $D[a,b]$. 
We may and will assume that
\ben\label{eq:dep}
\delta \leq \varepsilon.
\een

Now, take $(x_k, k \geq 0)$ a sequence in $[a,b]$ such that $\{x_k,k\geq 0\}$ is dense, and such that, the points of $\{x_k,k\geq 0\}$ are a.s.\ continuity points of $T$. Take a $K$ large enough, such that the connected components of $[a,b]\setminus\{x_0,\cdots,x_K\}$ have length $<\delta$; this is possible since $\{x_k,k\geq 0\}$ is dense.

Since $\l(T_n(x_1),\cdots,T_n(x_{K})\r)\as \l(T(x_1),\cdots,T(x_{K})\r)$, there exists $N_2$ such that for any $n\geq N_2$,
\ben\label{eq:gfeghet}
\max_{1\leq i\leq K} |T_n(x_i)-T(x_i)|\leq \varepsilon.\een
Take any (fixed) $n \geq \max\{N_1,N_2\}$; the event
\[E_{\varepsilon,n}=\{w'(\delta, T_n) \leq \varepsilon\}\cap \{w'(\delta, T) \leq \varepsilon\}\]
has probability at least $1-2\varepsilon$ by \eref{eq:dqdyq}.
When this event arises, there exists two $\delta$-sparse sequences $(\lambda_i)$ and $(t_i)$ such that
\[\max_i w([\lambda_{i-1},\lambda_i),T_n)\leq 2\varepsilon  \quad\textrm{ and } \quad \max_i w([t_{i-1},t_i),T)\leq 2\varepsilon.\]
Consider $(\widehat{x}_i,0\leq i\leq K)$ the list obtained by sorting 
increasingly the sequence  $(x_i,0\leq i\leq K)$. 
Since consecutive elements of $(\widehat{x}_i,0\leq i\leq K)$ are at most at distance $\delta$, when consecutive elements of the list $(\lambda_i)$ (resp. $(t_i)$) are at least at distance $\delta$, between two consecutive $\widehat{x}_i$ and $\widehat{x}_{i+1}$ can lie at most one element of $(\lambda_j)$, and at most one of $(t_j)$.

The main idea now is that $T_n$ (resp. $T$) may have big jumps of size $>\varepsilon$ at some of the elements of $(\lambda_j)$ (resp. $(t_j)$) but since $T_n$ and $T$ are close at the $(x_j)$ and have small variations between the $(\lambda_j)$ (resp. $(t_j)$), we can find a function $\varpi$ close to the identity to synchronize the big jumps. The details are as follows.

We define a suitable function $\varpi$ by working successively in each of the intervals $[\widehat{x}_i,\widehat{x}_{i+1}]$. Since the argument is the same in each interval, we choose an index $i\in\{0,\cdots,K-1\}$, we write $(x,x')$ instead of $(\widehat{x}_i,\widehat{x}_{i+1})$, and work in $[x,x']$. 
Three cases are possible:\\
(a) there is no element of $(\lambda_j)$ or of $(t_j)$ in $[x,x']$,\\
(b) there is a single element of $(\lambda_j)$ and a single element of $(t_j)$ in $[x,x']$,\\
(c) there is a single element of $(\lambda_j)$ in $[x, x']$ but none of $(t_j)$, or vice-versa.

\medskip
\noindent Case $(a)$: Both $x$ and $x'$ are in the same $(\lambda_j)$-interval $[\lambda_k,\lambda_{k+1})$ and in the same $(t_j)$ interval $[t_\ell,t_{\ell+1})$ (for some $k$ and $\ell$).
Hence, $w([x,x'], T_n)\leq w([\lambda_k,\lambda_{k+1}),T_n)\leq \varepsilon$.
Similarly, $w([x,x'], T)\leq \varepsilon$. 
Since $|T_n(x)-T(x)|\leq \varepsilon$ by \eref{eq:gfeghet}, we get $\sup_{y\in[x,x']}|T_n(y)-T(\lambda(y))|=\sup|T_n(y)-T(\lambda(y))|\leq 3\varepsilon$.

\medskip
\noindent Case $(b)$: Let~$\lambda$ denote the element of the list $(\lambda_j)$ lying in $[x,x']$, and by $t$ the element of $(t_j)$ lying in $[x,x']$. 
Also let $\lambda^p$ and $\lambda^f$ denote the elements of $(\lambda_j)$ preceding and following $\lambda$, 
and $t^{p}$ and $t^{f}$ denote  the element preceding and following $t$ in $(t_j)$.

The jump of $T_n$ at $\lambda$ and the one of $T$ at $t$ can be huge compared to $\varepsilon$, but they are almost equal.
Indeed, since $|T_n(x)-T(x)|\leq \varepsilon$ 
and $|T_n(x')-T(x')|\leq \varepsilon$, 
before both jumps and after both jumps, 
the two processes $T_n$ and $T$ are close to each other. 
More precisely, $w([\lambda ,x'],T_n)\leq w([\lambda,\lambda^f),T_n)\leq \varepsilon$,  
$w([t,x'],T)\leq \varepsilon$, $w([x,\lambda),T_n)\leq \varepsilon$, and $w([x,t),T)\leq \varepsilon$. 
This implies that
\[ |T_n(\lambda)-T(t)|\leq 4\varepsilon, |T_n(\lambda^{-})-T(t^-)|\leq 4\varepsilon\]
where left limit is denoted by the ``minus exponent'', which implies that
\[ |(T_n(\lambda)-T_n(\lambda^-))-(T(t)-T(t^-)|\leq 8\varepsilon.\]
We need to use $\varpi$ to synchronize the jump:
take $\varpi$ as the linear function by part that sends\\
-- $[x,t]$ linearly onto $[x,\lambda]$, and \\
--  $[t,x']$ linearly onto $[\lambda,x]$.\\
Globally, since $|x-x'|\leq \delta$, $|\varpi(y)-y|\leq \delta$.\par
We have on $[x,x']$, 
\ben\label{eq:hgryvs}
\max_{y\in[x,x']} \l|T_n(y)-T(\varpi(y))\r|\leq 10\varepsilon.\een

\medskip
\noindent Case $(c)$: we take again $\varpi(y)=y$ on $[x,x']$. 
By symmetry assume that there is an element $\lambda$ of $(\lambda_j)$ 
in $[x,x']$ but none of $(t_j)$. 
In this case, by a similar argument to Case (b), one can see that the jump of $T_n$ at $\lambda$ must be smaller than $4\varepsilon$, and the conclusion follows. 
We thus have $\max_{y\in[x,x']} |T(y)-T(x)|\leq 6\varepsilon$.

\medskip
In total, we showed that on $[x,x']$
\[\max\{|\varpi(u)-u|, u \in [x,x']\} \vee \max\{|T_n(y)-T(\varpi(y))|,y\in[x,x']\}\leq \delta+10\varepsilon\]
so that by \eref{eq:dep}, this is smaller than 11$\varepsilon$. This implies that $d(T_n,T)\leq 10\varepsilon$ with probability at least $1-2\varepsilon$.

\small

\newpage

\tableofcontents

\end{document}